\numberwithin{equation}{section}
\makeatletter \renewcommand{\p@enumii}{} \makeatother
\newenvironment{thmref}[1]{\begingroup}{\endgroup}
\newtheorem{cor}[equation]{Corollary}
\newtheorem{lem}[equation]{Lemma}
\newtheorem{prop}[equation]{Proposition}
\newtheorem{thm}[equation]{Theorem}
\Crefname{lem}{Lemma}{Lemmas}
\crefname{lem}{lemma}{lemmas}
\Crefname{cor}{Corollary}{Corollaries}
\crefname{cor}{corollary}{corollaries}
\Crefname{prop}{Proposition}{Propositions}
\crefname{prop}{proposition}{propositions}
\Crefname{thm}{Theorem}{Theorems}
\crefname{thm}{theorem}{theorems}
\theoremstyle{definition}
\newtheorem{assump}[equation]{Assumption}
\newtheorem{defn}[equation]{Definition}
\newtheorem{eg}[equation]{Example}
\newtheorem{obs}[equation]{Observation}
\newtheorem{notation}[equation]{Notation}
\newtheorem*{notation*}{Notation}
\newtheorem{rem}[equation]{Remark}
\theoremstyle{remark}
\newtheorem*{ack}{Acknowledgments}
\newtheorem{rems}[equation]{Remarks}
\Crefname{rem}{Remark}{Remarks}
\crefname{rem}{remark}{remarks}
\newcounter{case}
\newenvironment{case}[1][\unskip]{\refstepcounter{case}\bf
\medskip \noindent Case \thecase\ #1.\ \it}{\unskip\upshape}
\numberwithin{case}{equation}
\renewcommand{\thecase}{\arabic{case}}
\Crefname{case}{Case}{Cases}
\crefname{case}{case}{cases}
\newcounter{subcase}
\newenvironment{subcase}[1][\unskip]{\refstepcounter{subcase}\bf
\medskip \noindent Subcase \thesubcase\ #1.\ \it}{\unskip\upshape}
\numberwithin{subcase}{case}
\crefname{subcase}{subcase}{subcases}
\Crefname{subcase}{Subcase}{Subcases}
\newcounter{subsubcase}
\numberwithin{subsubcase}{subcase}
\crefname{subsubcase}{subsubcase}{subsubcases}
\Crefname{subsubcase}{Subsubcase}{Subsubcases}
\newcommand{\refnote}[1]{\marginpar{%
	\color{blue}
	\vbox to 0pt{\vss
	$\begin{pmatrix} \text{see} \\[-3pt] \text{note} \\[-3pt] \text{\ref{#1}} \end{pmatrix}$%
	\vskip -1.1\baselineskip}}}
\theoremstyle{definition}
\newtheorem{aid}{}
\numberwithin{aid}{section}
\newcommand{\oldaid}{}
\let\oldaid=\aid
\renewcommand{\aid}{\bigbreak\oldaid}
\newcommand{\oldendaid}{}
\let\oldendaid=\endaid
\renewcommand{\endaid}{\oldendaid\bigskip\hrule width\textwidth \bigbreak}
\renewcommand{\pmod}[1]{\ (\mathrm{mod}~#1)}
\let\oldenumerate=\enumerate
\def\enumerate{\oldenumerate \itemsep=\smallskipamount}
\let\olditemize=\itemize
\def\itemize{\olditemize \itemsep=\smallskipamount}
\newcommand{\nh}{\mathord{\mathchoice
	{\hbox to 0pt{\vrule width 6.5pt height 2.75pt depth -1.95pt\hss}n}%
	{\hbox to 0pt{\vrule width 6.5pt height 2.75pt depth -1.95pt\hss}n}%
	{\hbox to 0pt{\vrule width 5pt height 2pt depth -1.6pt\hss}n}%
	{\hbox to 0pt{\vrule width 4.3pt height 1.6pt depth -1.1pt\hss}n}}}
\newcommand{\CC}{\mathbb{C}}
\newcommand{\QQ}{\mathbb{Q}}
\newcommand{\ZZ}{\mathbb{Z}}
\DeclareMathOperator{\Aut}{Aut}
\DeclareMathOperator{\Cay}{Cay}
\DeclareMathOperator{\lcm}{lcm}
\newcommand{\pref}[1]{\textup(\ref{#1}\textup)}
\newcommand{\fullref}[2]{\ref{#1}\pref{#1-#2}}
\newcommand{\fullcref}[2]{\cref{#1}\pref{#1-#2}}
\newcommand{\noprelistbreak}{\smallskip\@nobreaktrue\nopagebreak} 
\begin{document}

\title[Automorphisms of the double cover of a circulant]{On automorphisms of the double cover \\ of a circulant graph}

\author{Ademir Hujdurović}
\address{University of Primorska, UP IAM, Muzejski trg 2, 6000 Koper, Slovenia and University of Primorska, UP FAMNIT, Glagolja\v ska 8, 6000 Koper, Slovenia}
\email{ademir.hujdurovic@upr.si}

\author{Đorđe Mitrović}
\address{University of Primorska, UP FAMNIT, Glagolja\v ska 8, 6000 Koper, Slovenia}
\email{mitrovic98djordje@gmail.com}

\author{Dave Witte Morris}
\address{Department of Mathematics and Computer Science, University of Lethbridge, Lethbridge, Alberta, T1K~3M4, Canada}
\email{dave.morris@uleth.ca}

\date{\today}

\begin{abstract}
A graph~$X$ is said to be \emph{unstable} if the direct product $X \times K_2$ (also called the canonical double cover of~$X$) has automorphisms that do not come from automorphisms of its factors~$X$ and~$K_2$. It is \emph{nontrivially unstable} if it is unstable, connected, and nonbipartite, and no two distinct vertices of X have exactly the same neighbors.

We find three new conditions that each imply a circulant graph is unstable. (These yield infinite families of nontrivially unstable circulant graphs that were not previously known.) We also find all of the nontrivially unstable circulant graphs of order~$2p$, where $p$ is any prime number. 

Our results imply that there does not exist a nontrivially unstable circulant graph of order~$n$ if and only if either $n$~is odd, or $n < 8$, or $n = 2p$, for some prime number~$p$ that is congruent to~$3$ modulo~$4$.
\end{abstract}

\maketitle

\section{Introduction}

Let $X$ be a circulant graph.
(All graphs in this paper are finite, simple, and undirected.)

\begin{defn}[\cite{CanCover}]
The \emph{canonical bipartite double cover} of~$X$ is the bipartite graph~$BX$ with $V(BX) = V(X) \times \{0,1\}$, where
	\[ \text{$(v,0)$ is adjacent to $(w,1)$ in $BX$}
	\quad \iff \quad
	\text{$v$ is adjacent to~$w$ in~$X$} . \]
\end{defn}

Letting $S_2$ be the symmetric group on the $2$-element set~$\{0,1\}$, it is clear that the direct product $\Aut X \times S_2$ is a subgroup of $\Aut BX$. We are interested in cases where this subgroup is proper:

\begin{defn}[{\cite[p.~160]{MarusicScapellatoSalvi}}]
 If $\Aut BX \neq \Aut X \times S_2$, then $X$ is \emph{unstable}.
\end{defn}

It is easy to see (and well known) that if $X$ is disconnected, or is bipartite, or has ``twin'' vertices (see \cref{TwinFreeDefn} below), 
then $X$ is unstable (unless $X$ is the trivial graph with only one vertex). The following definition rules out these trivial examples:

\begin{defn}[cf.\ {\cite[p.~360]{Wilson}}]
If $X$ is connected, nonbipartite, twin-free, and unstable, then $X$ is \emph{nontrivially unstable}. 
\end{defn}

S.\,Wilson found the following interesting conditions that force a circulant graph to be unstable. (See \cref{CayleyDefn} for the definition of the ``Cayley graph'' notation $\Cay(G, S)$.)

\begin{thm}[Wilson {\cite[Appendix~A.1]{Wilson} (and \cite[p.~156]{QinXiaZhou})}] \label{Wilson}
Let $X = \Cay(\ZZ_n, S)$ be a circulant graph, such that $n$ is even. Let $S_e = S \cap 2 \ZZ_n$ and $S_o = S \setminus S_e$. If any of the following conditions is true, then $X$ is unstable.
\noprelistbreak
	\begin{enumerate}
	\renewcommand{\theenumi}{C.\arabic{enumi}}
	\item \label{Wilson-C1}
	There is a nonzero element~$h$ of\/ $2 \ZZ_n$, such that $h + S_e = S_e$.
	
	\begingroup \renewcommand{\theenumi}{C.\arabic{enumi}$'$}
	\item \label{Wilson-C2}
	$n$~is divisible by~$4$, and there exists $h \in 1 + 2\ZZ_n$, such that
		\begin{enumerate}
		\item \label{Wilson-C2-So}
		$2h + S_o = S_o$, 
		and 
		\item \label{Wilson-C2-s+b}
		for each $s \in S$, such that $s \equiv 0$ or~$-h \pmod{4}$, we have $s + h \in S$.
		\end{enumerate}
	\endgroup
	
	\begingroup \renewcommand{\theenumi}{C.\arabic{enumi}$'$}
	\item \label{Wilson-C3}
	There is a subgroup~$H$ of\/~$\ZZ_n$, such that the set
		\[ R = \{\, s \in S \mid s + H \not\subseteq S \,\} ,\]
	is nonempty and has the property that if we let $d = \gcd \bigl( R \cup \{n\} \bigr)$, then $n/d$ is even, $r/d$ is odd for every $r \in R$, and either $H \nsubseteq d \ZZ_n$ or $H \subseteq 2d \ZZ_n$.
	\endgroup
	
	\item \label{Wilson-C4}
	There exists $m \in \ZZ_n^\times$, such that $(n/2) + mS = S$.
	\end{enumerate}
\end{thm}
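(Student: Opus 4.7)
The plan is to treat each of the four conditions separately by exhibiting an explicit permutation~$\phi$ of $V(BX) = \ZZ_n \times \{0,1\}$ that is an automorphism of~$BX$ but does not arise from $\Aut X \times S_2$. A map in $\Aut X \times S_2$ either preserves both sides $\ZZ_n \times \{i\}$ and acts on each by the \emph{same} automorphism of~$X$, or swaps the two sides via some $\alpha \in \Aut X$. So it will suffice to construct~$\phi$ that preserves each side but acts differently on the two sides; non-membership in $\Aut X \times S_2$ then follows immediately from the construction.

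Condition~\ref{Wilson-C4} is the cleanest: set $\phi(v,0) = (mv,\, 0)$ and $\phi(v,1) = (mv + n/2,\, 1)$. The adjacency $w - v \in S$ becomes $m(w-v) + n/2 \in S$, which holds by the hypothesis $(n/2) + mS = S$, and the two side-actions differ by the nonzero constant~$n/2$. For~\ref{Wilson-C1} I would try the ansatz that $\phi$ translates by~$h$ the odd vertices on side~$0$ and the even vertices on side~$1$, fixing the others. Since $h$ is even, parity is preserved, and a four-case analysis on the parities of~$v$ and~$w$ shows that edges labelled in~$S_e$ are moved by a net~$\pm h$ (absorbed by $S_e + h = S_e$), while edges labelled in~$S_o$ either are fixed or have the translations at their two endpoints cancel.

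Conditions~\ref{Wilson-C2} and~\ref{Wilson-C3} require finer bookkeeping. For~\ref{Wilson-C2}, since $h$ is odd, translation by~$h$ swaps parity, so I would define~$\phi$ as a translation by~$h$ on the vertices whose residue modulo~$4$ lies in a well-chosen subset depending on the side~$i$. The invariance \fullref{Wilson-C2}{So} handles edges labelled in~$S_o$ (where the net translation is $\pm 2h$), while \fullref{Wilson-C2}{s+b} supplies the required ``new'' edge labels $s + h$ whenever a parity-flip at one endpoint converts some $s \in S$ into $s + h$. For~\ref{Wilson-C3}, the natural first attempt is to translate one side by a generator of~$H$: this permutes the edges whose labels lie in $S \setminus R$, but fails on the $R$-edges. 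The hypotheses --- $n/d$ even, $r/d$ odd for every $r \in R$, and the dichotomy $H \nsubseteq d\ZZ_n$ or $H \subseteq 2d\ZZ_n$ --- should then let one correct the partial automorphism on the $R$-edges by composing with a secondary translation or parity flip (possibly by~$n/2$).

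I expect the main obstacle to be~\ref{Wilson-C3}, because $R$ is defined by a non-invariance condition and the hypothesis is phrased through a gcd rather than a transparent group-theoretic invariant. The key should be to use the oddness of $r/d$ to detect exactly which cosets of $d\ZZ_n$ an $R$-labelled edge crosses, and then to combine the even order of~$n/d$ with the dichotomy on~$H$ to pair up the remaining discrepancies. In every case the verifications --- that $\phi$ is a permutation, and that it respects adjacency --- are routine case analyses, so the real creative content is in writing down the correct~$\phi$.
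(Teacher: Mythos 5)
Your overall strategy --- exhibiting a side-preserving automorphism of $BX$ that acts differently on the two sides, i.e.\ a pair $(\alpha,\beta)$ with $\alpha\neq\beta$ carrying edges to edges --- is exactly the mechanism the paper uses (\cref{UnstableIffPerms}). Your constructions for~\pref{Wilson-C4} and~\pref{Wilson-C1} are correct and complete: the first is precisely the proof of \cref{IsoTranslateS} specialized to the isomorphism $x\mapsto mx$, and the second is the map in the easy direction of \cref{HalfIsBlock} (with the roles of the parities swapped, which is harmless).

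The genuine gap is in~\pref{Wilson-C2} and~\pref{Wilson-C3}: you describe a plan (``a well-chosen subset depending on the side,'' ``correct the partial automorphism by composing with a secondary translation'') but never write down the permutations, and these are exactly the cases where the content lies --- indeed they are the cases where Wilson's original statements were wrong and had to be corrected. The paper does not prove these two cases directly either; it proves the more general \cref{WilsonC1C3}, whose part~\pref{WilsonC1C3-inS} uses the maps
\[
\alpha(x)=\begin{cases} x+h & x\in 2K\\ x-h & x\in 2K+h\\ x & \text{otherwise}\end{cases}
\qquad
\beta(x)=\begin{cases} x+h & x\in K_o\\ x-h & x\in K_o+h\\ x & \text{otherwise,}\end{cases}
\]
where $K_o=K\setminus 2K$, and then \cref{WilsonIsSpecialCase} verifies that \pref{Wilson-C2} (with $K=2\ZZ_n$, $H=\langle h\rangle$) and \pref{Wilson-C3} (with $K=\langle d\rangle$) satisfy the hypothesis $(S\setminus K_o)+H\subseteq S\cup K_o$. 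Two points in particular are invisible from your sketch and would have to be supplied: (i) for \pref{Wilson-C2}, the reduction of condition~\pref{Wilson-C2-s+b} to the containment above requires an induction along the $H$-coset alternating between the residues $0,\pm h \pmod 4$, not just a single application; (ii) even showing $\alpha\neq\beta$ for the displayed maps is not automatic --- it uses the side conditions ($|H|\neq 2$ or $4\mid|K|$), which in the paper's reduction are extracted from the hypotheses ``$n$ divisible by~$4$'' in \pref{Wilson-C2} and ``$H\nsubseteq d\ZZ_n$ or $H\subseteq 2d\ZZ_n$'' in \pref{Wilson-C3}. Until the permutations for these two cases are written out and the edge-preservation case analysis is carried through, the proof is incomplete for half of the theorem.
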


\begin{rem} \label{WilsonCorrectionsRemark}
As will be explained in \cref{WilsonCorrections}, the statements~\pref{Wilson-C2} and~\pref{Wilson-C3} are slightly corrected versions of the original statements of Theorems~C.2 and~C.3 that appear in~\cite{Wilson}. The correction~\pref{Wilson-C2} is due to Qin-Xia-Zhou \cite[p.~156]{QinXiaZhou}.
\end{rem}

\begin{defn}
We say that $X$ has \emph{Wilson type} \pref{Wilson-C1}, \pref{Wilson-C2}, \pref{Wilson-C3}, or~\pref{Wilson-C4}, respectively, if it satisfies the corresponding condition of \cref{Wilson}. 
\end{defn}

In this terminology (modulo the corrections mentioned in \cref{WilsonCorrectionsRemark}), Wilson \cite[p.~377]{Wilson} conjectured that every nontrivially unstable circulant graph has a Wilson type. Unfortunately, this is not true, because counterexamples of order~24 were found by Qin-Xia-Zhou \cite[p.~156]{QinXiaZhou} (cf.\ \cref{Unstable24}).

Three of our results provide new conditions that force a circulant graph to be unstable. (These conditions provide infinitely many new counterexamples.) It seems likely that other conditions (and additional counterexamples) remain to be discovered.

Our first condition includes~\pref{Wilson-C1} as the special case where $K = \ZZ_n$, includes~\pref{Wilson-C2} as the special case where $K = 2\ZZ_n$, and includes~\pref{Wilson-C3} as another special case  (see~\cref{WilsonIsSpecialCase}).

\begin{thmref}{WilsonC1C3}
\begin{thm}[cf.\ {\cite[Thms.~1, C.1, and C.3]{Wilson}}] 
Let $X = \Cay(\ZZ_n, S)$ be a circulant graph.
Choose nontrivial subgroups~$H$ and~$K$ of\/~$\ZZ_n$, such that $|K|$ is even, and let $K_o = K \setminus 2K$. If either 
	\begin{enumerate}
	\item
	$S + H\subseteq S \cup (K_o + H)$ and $H \cap K_o = \emptyset$,
	or
	\item
	$(S \setminus K_o) + H \subseteq S \cup K_o$ and either\/ $|H| \neq 2$ or\/ $|K|$ is divisible by\/~$4$,
	\end{enumerate}
then $X$ is not stable.
\end{thm}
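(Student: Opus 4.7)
The plan is to construct an automorphism $\alpha$ of $BX$ that lies outside $\Aut X \times S_2$. For $h \in H \setminus \{0\}$ and $H$-invariant subsets $A_0, A_1 \subseteq \ZZ_n$, I would define
\[ \alpha(v, i) = \bigl( v + \mathbf{1}_{A_i}(v)\, h,\ i \bigr); \]
each layer restriction is a bijection because each $A_i$ is a union of $H$-cosets. The adjacency $(v, 0) \sim (w, 1) \Leftrightarrow w - v \in S$ is preserved by $\alpha$ provided that, for $s = w - v$ and $\delta = \mathbf{1}_{A_1}(w) - \mathbf{1}_{A_0}(v) \in \{-1, 0, 1\}$, one has $s \in S \Leftrightarrow s + \delta h \in S$. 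The case $\delta = 0$ is automatic, so the task reduces to the cases $\delta = \pm 1$.

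The key lemma is that under either hypothesis of the theorem, $S \setminus (K_o + H)$ is $H$-invariant. For hypothesis~(1): $S + H \subseteq S \cup (K_o + H)$ together with the $H$-invariance of $K_o + H$ immediately yields $s + H \subseteq S$ whenever $s \in S \setminus (K_o + H)$. For hypothesis~(2): such an $s$ also lies in $S \setminus K_o$, so $s + H \subseteq S \cup K_o$; and $s + h' \in K_o$ would force $s \in K_o + H$, a contradiction, so $s + H \subseteq S$. Setting $A_0 = 2K + H$ and $A_1 = K_o + H$, a coset computation using $K_o - K_o = 2K$ (valid because $K_o$ is the nontrivial coset of $2K$ in the cyclic group~$K$) shows that in the $\delta = \pm 1$ cases the difference $s$ necessarily avoids $K_o + H$, and the $H$-invariance above then gives $s \pm h \in S$, completing the verification that $\alpha$ is an automorphism of~$BX$.

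For non-triviality ($\alpha \notin \Aut X \times S_2$), it suffices that $A_0 \neq A_1$, which is equivalent to $K \not\subseteq H$. Under hypothesis~(1), the disjointness $H \cap K_o = \emptyset$ together with $K_o \neq \emptyset$ (since $|K|$ is even) rules out $K \subseteq H$, and the proof is complete. Under hypothesis~(2) with $K \not\subseteq H$ the same argument applies. The main obstacle is the residual sub-case of~(2) in which $K \subseteq H$: then $A_0 = A_1 = H$, and a short computation using hypothesis~(2) shows that the resulting layer map in fact lies in $\Aut X$, so this particular $\alpha$ degenerates. The auxiliary condition ``$|H| \neq 2$ or $|K|$ divisible by~$4$'' is designed to rescue this sub-case by enabling an alternative construction: when $|K|$ is divisible by~$4$ (so $|2K| \geq 2$ and $2K \subseteq H$), one takes instead $A_0 = 2K$, $A_1 = K_o$, and $h \in (2K \cap H) \setminus \{0\}$, and runs the analogous argument inside~$K$; the remaining sub-case $|K| = 2$ with $|H| \neq 2$ requires abandoning the partial-shift template in favor of a partial-involution construction, assembling disjoint transpositions of the form $(v, v+h)$ on a carefully chosen subset to produce the needed automorphism.
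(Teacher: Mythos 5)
Your construction for hypothesis~(1) is essentially the paper's: the partial shift by $h$ on $A_0 = 2K+H$ in one layer and on $A_1 = K_o+H$ in the other, together with the observation that $S \setminus (K_o+H)$ is $H$-invariant and the coset computation $(K_o+H)-(K_o+H) = 2K+H$, is exactly how the paper proves part~(1), and that part of your argument is sound. The problem is your treatment of non-triviality in hypothesis~(2). Since $2K+H$ is a subgroup and $K_o+H$ is a single coset of it, one has $A_0 = A_1$ if and only if $K_o \cap (2K+H) \neq \emptyset$, equivalently $H \cap K_o \neq \emptyset$ --- \emph{not} if and only if $K \subseteq H$. Under hypothesis~(1) the assumption $H \cap K_o = \emptyset$ directly gives $0 \notin A_1$, so $A_0 \neq A_1$ and you are fine (though not for the reason you state). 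Under hypothesis~(2) there is no such assumption, and your case split ``$K \not\subseteq H$: same argument applies'' is false. Concretely, take $n = 12$, $K = \ZZ_{12}$, $H = \langle 3 \rangle$, $S = \{\pm 1, \pm 2, \pm 4\}$. Then $|K|$ is divisible by~$4$, $K_o$ is the set of odd residues, $(S \setminus K_o) + H = \{2,4,8,10\} + \{0,3,6,9\} \subseteq S \cup K_o$, and $K \not\subseteq H$; yet $2K + H = \langle 2 \rangle + \langle 3 \rangle = \ZZ_{12} = K_o + H$, so your map degenerates to the global translation by~$3$, which lies in $\Aut X \times S_2$. Your only fallback constructions are reserved for the sub-case $K \subseteq H$, so this regime falls through the cracks entirely; in addition, your final sub-case ($|K|=2$, $|H| \neq 2$) is only gestured at (``a carefully chosen subset'') rather than proved.

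For comparison, the paper avoids this trap by splitting case~(2) on whether the generator $h$ of~$H$ lies in $2K$: if $h \in 2K$ then $K_o + H = K_o$ and case~(1) applies, and if $h \notin 2K$ it replaces the partial shift by a partial \emph{involution} ($x \mapsto x+h$ on $2K$ and $x \mapsto x-h$ on $2K+h$ for one layer; the same with $K_o$ in place of $2K$ for the other). The non-triviality of that involution pair is exactly what the auxiliary condition ``$|H| \neq 2$ or $4 \mid |K|$'' guarantees, and it handles the example above (there $\alpha$ adds $3$ to evens and subtracts $3$ from odds, while $\beta$ does the reverse). To repair your argument you would need either to adopt that involution construction for the whole of the regime $H \cap K_o \neq \emptyset$, or to show separately that every configuration with $H \cap K_o \neq \emptyset$ satisfying hypothesis~(2) can be reduced to one where your shift is non-trivial; as written, the proof of part~(2) has a genuine gap.
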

\end{thmref}

Our second condition is the following generalization of~\pref{Wilson-C4}.

\begin{thmref}{IsoTranslateS}
\begin{prop}
Assume $X = \Cay(\ZZ_n, S)$ is a circulant graph of even order. If $X \cong \Cay \bigl( \ZZ_n, (n/2) + S \bigr)$, then $X$ is unstable.
\end{prop}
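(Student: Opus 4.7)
The plan is to construct explicitly an automorphism of $BX$ that cannot lie in $\Aut X \times S_2$. Let $\varphi \colon \ZZ_n \to \ZZ_n$ be a graph isomorphism from $X$ to $X' = \Cay\bigl( \ZZ_n, (n/2) + S \bigr)$, which exists by hypothesis. The idea is to apply $\varphi$ on one fibre of the double cover and $\varphi$ followed by translation by $n/2$ on the other, defining $\sigma \colon V(BX) \to V(BX)$ by
\[
\sigma(v, 0) = \bigl( \varphi(v),\, 0 \bigr), \qquad \sigma(v, 1) = \bigl( \varphi(v) + n/2,\, 1 \bigr).
\]

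To verify that $\sigma$ is an automorphism of $BX$, I would check that adjacency is preserved: the images $\sigma(v, 0)$ and $\sigma(w, 1)$ are adjacent in $BX$ iff $\varphi(w) - \varphi(v) \in -n/2 + S = (n/2) + S$ (using that $n$ is even, so $-n/2 \equiv n/2$), and since $\varphi$ is an isomorphism from $X$ to $X'$, this is equivalent to $w - v \in S$, i.e., to $(v, 0)$ and $(w, 1)$ being adjacent in $BX$. Bijectivity of $\sigma$ is immediate from the bijectivity of $\varphi$ on each fibre.

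Finally, to see that $\sigma \notin \Aut X \times S_2$, note that any element of this subgroup acts by $(v, i) \mapsto \bigl( \alpha(v), \pi(i) \bigr)$ for some $\alpha \in \Aut X$ and $\pi \in S_2$. Since $\sigma$ preserves the second coordinate, $\pi$ would have to be the identity, and then the two defining formulas for $\sigma$ would force $\alpha(v) = \varphi(v) = \varphi(v) + n/2$ for all~$v$, contradicting $n/2 \neq 0$ in $\ZZ_n$. This yields $\sigma \in \Aut BX \setminus (\Aut X \times S_2)$, so $X$ is unstable.

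There is no serious technical obstacle here; the only creative step is guessing the correct ``twist'' by $n/2$ on a single fibre, which is exactly what is needed to absorb the shift in the connection set $(n/2) + S$ defining $X'$.
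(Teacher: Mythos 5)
Your proposal is correct and is essentially the paper's proof: the paper applies \cref{UnstableIffPerms} with $\alpha = \varphi$ and $\beta(x) = \varphi(x) + n/2$, and the automorphism of $BX$ built in that lemma's proof is exactly your $\sigma$. You have simply inlined the lemma; the adjacency computation and the argument that $\sigma \notin \Aut X \times S_2$ match the paper's.
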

\end{thmref}

Letting $S_e$ be the set of even elements of~$S$, and $S_o$ be the set of odd elements, as in \cref{Wilson}, our third condition states that if $\Cay(\ZZ_n, S_e)$ is unstable, and $S_o$ is invariant under sufficiently many translations, then $X$ is unstable.

\begin{thmref}{XeUnstable}
\begin{prop} 
Assume $X = \Cay(\ZZ_n, S)$ is a circulant graph of even order.
If there exist permutations $\alpha$ and~$\beta$ of\/~$2 \ZZ_n$, and a subgroup~$H$ of~$2\ZZ_n$, such that:
	\begin{enumerate}
	\item $\alpha \neq \beta$,
	\item if the vertices $u,v \in 2 \ZZ_n$ are adjacent, then the vertices $\alpha(u)$ and $\beta(v)$ are also adjacent,
	\item $w + H \subseteq S$, for all odd $w \in S$,
	and
	\item $\alpha(v) - v \in H$ and $\beta(v) - v \in H$, for all $v \in 2\ZZ_n$,
	\end{enumerate}
then $X$ is unstable.
\end{prop}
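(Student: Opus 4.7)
The plan is to exhibit an explicit automorphism $\phi$ of $BX$ that does not lie in $\Aut X \times S_2$, thereby witnessing instability. Since the hypotheses treat the two color classes of $BX$ asymmetrically via $\alpha$ and $\beta$, the natural candidate is to let $\phi$ act as $(v,0) \mapsto (\alpha(v), 0)$ and $(v,1) \mapsto (\beta(v), 1)$ on the even vertices, and as the identity on the odd vertices. Because $\alpha$ and $\beta$ are permutations of $2\ZZ_n$, this $\phi$ is a vertex-bijection of $BX$ preserving both color classes.

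To verify that $\phi \in \Aut BX$, I would check that every edge of $BX$ is sent to an edge, splitting into cases by the parities of the endpoints. The even-even case follows directly from hypothesis~(2), and the odd-odd case is trivial because $\phi$ fixes both endpoints. In the mixed case with $u$ even and $v$ odd, the edge $(u,0)(v,1)$ corresponds to $w := v-u \in S_o$; writing $v - \alpha(u) = w + \bigl(u - \alpha(u)\bigr)$ and using hypothesis~(4) together with the fact that $H$ is a subgroup (so closed under negation) gives $v - \alpha(u) \in w + H$, and hypothesis~(3) then places this in~$S$. The case $u$ odd, $v$ even is symmetric, using $\beta$ in place of $\alpha$. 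Since a finite vertex-bijection that carries edges to edges is necessarily a graph automorphism, this yields $\phi \in \Aut BX$.

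The last step is to check $\phi \notin \Aut X \times S_2$. Because $\phi$ preserves both color classes, if it belonged to $\Aut X \times S_2$ it would have to come from some $\sigma \in \Aut X$ acting identically on both sheets; but then $\sigma$ would coincide with $\alpha$ on $2\ZZ_n$ (from its action on color~$0$) and simultaneously with $\beta$ on $2\ZZ_n$ (from its action on color~$1$), forcing $\alpha = \beta$ and contradicting hypothesis~(1). I expect the main obstacle to be the mixed-parity verification above, where all four hypotheses genuinely combine: condition~(3) is exactly what is needed to absorb the $H$-translations produced by condition~(4) into $S_o$, and if $H$ were not required to be a subgroup (so closed under negation) the step $u - \alpha(u) \in H$ would fail.
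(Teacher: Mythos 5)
Your proposal is correct and is essentially the paper's argument: the paper extends $\alpha,\beta$ to permutations $\alpha',\beta'$ of $\ZZ_n$ that fix the odd vertices and then invokes its general criterion (\cref{UnstableIffPerms}), whose proof builds exactly the two-sheet map $\phi$ you construct directly, and the mixed-parity verification via $w+H\subseteq S$ is the same computation. No substantive differences.
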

\end{thmref}

Wilson's conjecture is vacuously true for graphs of odd order:

\begin{thm}[Fernandez-Hujdurović \cite{FernandezHujdurovic} (or \cite{Morris-OddAbelian})] \label{OddCirculant}
There are no nontrivially unstable circulant graphs of odd order.
\end{thm}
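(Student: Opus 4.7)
The plan is to show directly that any connected, nonbipartite, twin-free circulant $X = \Cay(\ZZ_n, S)$ of odd order~$n$ is stable (i.e., $\Aut BX = \Aut X \times S_2$); this is equivalent to the theorem. First I would reduce to analyzing bipartition-preserving automorphisms: since $X$ is connected and nonbipartite, $BX$ is connected and bipartite, so every $\sigma \in \Aut BX$ either preserves or swaps the two parts $V(X) \times \{0\}$ and $V(X) \times \{1\}$. Composing with the deck transformation $\tau(v,i) := (v,1-i)$, which lies in $\{1\}\times S_2 \subseteq \Aut X \times S_2$, I may assume $\sigma$ preserves both parts. Writing $\sigma(v,i) = (\sigma_i(v), i)$ for bijections $\sigma_0, \sigma_1 : \ZZ_n \to \ZZ_n$, the automorphism condition unwinds to the ``ping-pong'' identities
\[
\sigma_0(v + S) = \sigma_1(v) + S \quad\text{and}\quad \sigma_1(v + S) = \sigma_0(v) + S \qquad \text{for all } v \in \ZZ_n,
\]
and the goal reduces to showing $\sigma_0 = \sigma_1$: once this holds, the first identity collapses to $\sigma_0(v+S) = \sigma_0(v) + S$, i.e., $\sigma_0 \in \Aut X$, whence $\sigma \in \Aut X \times \{1\} \subseteq \Aut X \times S_2$.

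Iterating the ping-pong identities yields $\sigma_i(v + 2S) = \sigma_i(v) + 2S$ for both $i \in \{0,1\}$, where $2S := S + S$ is the sumset. Nonbipartiteness and connectedness of $X$ force $\langle 2S\rangle = \ZZ_n$---otherwise $S$ would lie in a nontrivial coset of an index-$2$ subgroup of $\ZZ_n$, making $X$ bipartite---so $\sigma_0$ and $\sigma_1$ are each automorphisms of the ``two-step'' circulant $\Cay(\ZZ_n, 2S \setminus \{0\})$.

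The decisive step---and the main obstacle---is to upgrade this two-step invariance to the pointwise equality $\sigma_0 = \sigma_1$, and this is where the odd-order hypothesis enters essentially. The Chinese Remainder Theorem isomorphism $\ZZ_n \times \ZZ_2 \cong \ZZ_{2n}$ (valid because $\gcd(n,2) = 1$) identifies $BX$ with a bipartite circulant $\Cay(\ZZ_{2n}, T)$, where $T$ is the set of odd elements of $\ZZ_{2n}$ that reduce modulo~$n$ to~$S$; under this identification, $\tau$ becomes translation by~$n$, which is the unique element of order~$2$ in the regular cyclic subgroup $\ZZ_{2n} \leq \Aut BX$. Because $n$ is odd, $\langle\tau\rangle$ is the (unique) Sylow $2$-subgroup of $\ZZ_{2n}$ and is therefore characteristic there. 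Combined with the twin-free hypothesis $\mathrm{Per}(S) := \{h \in \ZZ_n : h + S = S\} = \{0\}$, this rigidity can be leveraged to force $\sigma_1(v) - \sigma_0(v) \in \mathrm{Per}(S) = \{0\}$ for every $v \in \ZZ_n$, yielding $\sigma_0 = \sigma_1$. That this step genuinely requires oddness of~$n$ is illustrated by the families of counterexamples produced by Wilson's conditions \pref{Wilson-C1}--\pref{Wilson-C4} for even~$n$; the detailed implementation of the decisive step is carried out in \cite{FernandezHujdurovic, Morris-OddAbelian}.
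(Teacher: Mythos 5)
Your reduction to showing $\sigma_0 = \sigma_1$ for a bipartition-preserving automorphism is the right framing, and the ping-pong identities and the observation that $\langle S+S\rangle = \ZZ_n$ are fine. But the proof has a genuine gap exactly where you say the ``decisive step'' is: you never actually derive $\sigma_1(v) - \sigma_0(v) \in \mathrm{Per}(S)$. The mechanism you offer --- that under the CRT identification $\ZZ_n \times \ZZ_2 \cong \ZZ_{2n}$ the subgroup $\langle \tau \rangle$ is the Sylow $2$-subgroup of the regular cyclic group and hence characteristic \emph{in that cyclic group} --- gives no control over an arbitrary $\sigma \in \Aut BX$. Characteristicity of $\langle\tau\rangle$ in $\ZZ_{2n}$ only constrains automorphisms that normalize the regular subgroup, and there is no reason a priori that $\sigma$ does; the whole difficulty of instability is precisely that $\Aut BX$ can be much larger than the normalizer of the regular representation. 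Writing ``this rigidity can be leveraged'' and deferring to \cite{FernandezHujdurovic, Morris-OddAbelian} leaves the entire content of the theorem unproved.

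For comparison, the argument the paper relies on (\cref{mSiso}, from \cite{Morris-OddAbelian}) is a counting argument, not a set-level one: by counting closed walks of prime length $p$ modulo $p$, one shows that every automorphism of $\Cay(G,S)$ with $G$ abelian is also an automorphism of $\Cay(G, mS)$ whenever $m$ is injective on $S$. Applied to $BX = \Cay(\ZZ_n \times \ZZ_2, S \times \{1\})$ with $m = n+1$ (which is even because $n$ is odd, and acts as the identity on $\ZZ_n$), this shows every $\sigma \in \Aut BX$ preserves $\Cay(\ZZ_n\times\ZZ_2, S \times \{0\})$, i.e., the two horizontal copies of $X$; twin-freeness then forces the second coordinate to behave, which is exactly your $\sigma_0 = \sigma_1$. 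Your two-step sumset invariance $\sigma_i(v+2S) = \sigma_i(v)+2S$ is the set-level shadow of this and discards the multiplicity information that makes the argument work. If you want a self-contained proof you should reprove the walk-counting lemma and run the $m = n+1$ trick; as written, the proposal is a sketch whose key step is missing.
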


Therefore, only graphs of even order are of interest. Another of our main results shows that the conjecture is true in the easiest of these interesting cases:

\begin{thmref}{2p}
\begin{thm}
If $p$ is a prime number, then every nontrivially unstable circulant graph of order~$2p$ has Wilson type~\pref{Wilson-C4}.
\end{thm}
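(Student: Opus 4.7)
We may assume $p \geq 5$, since no nontrivially unstable graph has order less than~$8$, covering the cases $p \in \{2,3\}$. Let $X = \Cay(\ZZ_{2p}, S)$ be nontrivially unstable. Using the Chinese Remainder Theorem, identify $\ZZ_{2p} \cong \ZZ_2 \oplus \ZZ_p$ (so that $p$ corresponds to~$(1, 0)$), and write $S = \bigl(\{0\}\times T_0\bigr) \cup \bigl(\{1\} \times T_1\bigr)$ for symmetric $T_0, T_1 \subseteq \ZZ_p$ with $0 \notin T_0$. A direct check shows that Wilson type~\pref{Wilson-C4} (with multiplier $m = (1, b) \in \ZZ_{2p}^\times$) is equivalent to the conjunction $T_1 = b T_0$ and $b^2 T_0 = T_0$; the plan is to exhibit such a~$b$. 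The structural hypotheses readily give $T_0 \neq \emptyset$ (nonbipartite), $T_1 \neq \emptyset$ (connectedness, since otherwise $\langle S\rangle \subseteq \{0\}\times\ZZ_p$), $T_0 \neq T_1$ (twin-freeness applied to the unique involution~$p$), and the additive translation-stabilizer of~$T_0$ in~$\ZZ_p$ is trivial (as $\ZZ_p$ has prime order).

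Next, verify that Wilson conditions~\pref{Wilson-C1}--\pref{Wilson-C3}, and also the more general conditions of \cref{WilsonC1C3,IsoTranslateS,XeUnstable}, each either fail at $n = 2p$ or directly yield~\pref{Wilson-C4}. Specifically, \pref{Wilson-C1} is ruled out by the triviality of the $T_0$-stabilizer; \pref{Wilson-C2} by $4 \nmid 2p$; and \pref{Wilson-C3} by enumeration of the admissible subgroups $H \in \{\langle p \rangle, 2\ZZ_{2p}, \ZZ_{2p}\}$, each case either contradicting twin-freeness or producing a multiplier realising~\pref{Wilson-C4}. The hypothesis of \cref{IsoTranslateS} at $n = 2p$ unwinds directly into $T_1 = b T_0$ with $b^2 T_0 = T_0$, and the hypotheses of \cref{WilsonC1C3,XeUnstable} reduce similarly since their choices of $H$ and $K$ are forced into the same short list.

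For instability not captured by any of the above, analyze $\Aut BX$, where $BX$ is realised as a Cayley graph on $\ZZ_2 \times \ZZ_2 \times \ZZ_p$. The Sylow $p$-subgroup of $\Aut BX$ is cyclic of order~$p$ and semiregular, and any ``extra'' automorphism of~$BX$ may, after composition with an element of $\Aut X \times S_2$, be assumed to normalize it; such an automorphism therefore acts on the $\ZZ_p$-coordinate as multiplication by some $b \in \ZZ_p^\times$, and tracking how it permutes the four-element fibers over~$\ZZ_p$ forces $T_1 = b T_0$. A short parity analysis of the index $[\ZZ_p^\times : H]$, where $H$ is the multiplicative stabilizer of~$T_0$, then shows that when $p \equiv 3 \pmod 4$ no such~$b$ can exist; this is consistent with the abstract, which asserts that no nontrivially unstable circulant of order~$2p$ exists in that case, so the theorem is vacuous there.

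The main obstacle is the normalizer step in the last paragraph: exceptional~$X$ whose $\Aut X$ is strictly larger than the normalizer of the regular $\ZZ_{2p}$-action (such as certain lexicographic blow-ups, or the case when $X_e$ is complete on~$\ZZ_p$) require a separate direct verification that~\pref{Wilson-C4} still holds, which will be handled using the classification of circulant graphs of order~$2p$ due to Alspach and Parsons.
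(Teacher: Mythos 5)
Your proposal has a genuine gap at its core. The logical burden of the theorem is to control \emph{every} automorphism of $BX$, not to check that the known sufficient conditions for instability reduce to~\pref{Wilson-C4} (your second paragraph is therefore beside the point: an unstable graph of order $2p$ might a priori be unstable for a reason not on that list). The real work is thus in your third paragraph, and both of its key claims are unjustified. First, the Sylow $p$-subgroup of $\Aut BX$ need not have order~$p$: for circulant graphs with a blow-up (lexicographic) structure, or when the induced graph on a bipartition class is complete multipartite, $\Aut BX$ can easily have order divisible by a high power of~$p$, and these are exactly the cases you defer to an unexecuted ``separate direct verification'' via Alspach--Parsons. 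Second, even when the Sylow $p$-subgroup is cyclic of order~$p$, the reduction ``after composition with an element of $\Aut X \times S_2$ we may assume the extra automorphism normalizes it'' does not follow from Sylow's theorems: conjugacy of Sylow subgroups only lets you compose with some element of $\Aut BX$, which is useless here unless the Sylow subgroup is normal. So the argument is incomplete precisely on the hard cases.

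The paper's proof avoids this entirely by a counting argument that applies uniformly to \emph{all} automorphisms of $BX$, however large $\Aut BX$ is. Setting $S' = S \setminus (S+p)$, a parity count of walks of length~$2$ (\cref{2S'-BX}, a consequence of \cref{mSiso}) shows every automorphism of $BX$ preserves the Cayley graph on $\ZZ_{2p}\times\ZZ_2$ with connection set $2S'\times\{0\}$; twin-freeness makes $S'$ nonempty, and when $2S'\neq\{0\}$ primality of $|2\ZZ_{2p}|$ forces $2\ZZ_{2p}\times\{0\}$ to be a block of $\Aut BX$. The block structure is then exploited via \cref{2ZBlock,InterchangeCI}, using Muzychuk's theorem (\cref{Muzychuk-CI}) that square-free order gives the Cayley Isomorphism Property, to conclude Wilson type~\pref{Wilson-C1} or~\pref{Wilson-C4}, and~\pref{Wilson-C1} is impossible since $2\ZZ_{2p}$ has no nontrivial proper subgroup. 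The degenerate case $2S'=\{0\}$ (i.e.\ $S'=\{p\}$) is killed by \cref{0orbit} and connectedness. If you want to salvage your approach, you would need to actually carry out the analysis of the exceptional graphs with large automorphism groups; as written, the proposal does not prove the theorem.
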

\end{thmref}

This allows us to provide an explicit description of the nontrivially unstable circulant graphs of such orders:

\begin{thmref}{Describe2p}
\begin{cor}
Let $X = \Cay(\ZZ_{2p}, S)$ be a connected, twin-free, nonbipartite, circulant graph of order~$2p$, where $p$ is prime, and let $S_e = S \cap 2\ZZ_{2p}$. The graph~$X$ is nontrivially unstable if and only if there exists $m \in \ZZ_{2p}^\times$, such that $m^2 S_e = S_e$, $m S_e \neq S_e$, and $S = S_e \cup \bigl( (n/2) + m S_e \bigr)$.
\end{cor}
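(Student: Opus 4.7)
The approach is to reduce the statement to \cref{2p}, which characterises nontrivial instability of circulant graphs of order~$2p$ as equivalent to Wilson type~\pref{Wilson-C4}. Writing $n = 2p$, that type asserts the existence of $m \in \ZZ_{2p}^\times$ with $p + mS = S$. Any such $m$ is coprime to~$2$, hence odd, so multiplication by $m$ preserves the parity of elements of~$\ZZ_{2p}$, whereas translation by~$p$ swaps parity. Splitting $S = S_e \cup S_o$, the equation $p + mS = S$ therefore decomposes into the two identities $S_o = p + m S_e$ and $S_e = p + m S_o$.

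For the forward direction, \cref{2p} supplies such an~$m$. The first identity gives $S = S_e \cup (p + m S_e)$ immediately. Substituting the first identity into the second and using $p + mp = p(1+m) \equiv 0 \pmod{2p}$ (which holds because $m$ is odd), one obtains $S_e = p + m(p + m S_e) = m^2 S_e$. To rule out $m S_e = S_e$, I would suppose it held; then $S_o = p + S_e$, whence
\[
 p + S = (p + S_e) \cup \bigl( p + (p + S_e) \bigr) = S_o \cup S_e = S.
\]
This forces $p \notin S$ (otherwise $0 = 2p \in p + S$, but $0 \notin S$), so $N(0) = S$ and $N(p) = p + S = S$ agree, making the distinct vertices $0$ and $p$ twins, contradicting the hypothesis.

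For the converse, given the three conditions, a direct computation verifies Wilson type~\pref{Wilson-C4}:
\[
 p + m S = (p + m S_e) \cup \bigl( p + m p + m^2 S_e \bigr) = (p + m S_e) \cup S_e = S,
\]
where the middle equality uses $p + mp \equiv 0 \pmod{2p}$ and $m^2 S_e = S_e$. Hence $X$ is unstable by \cref{Wilson}, and the assumed connectedness, non-bipartiteness, and twin-freeness promote this to nontrivial instability.

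The heart of the argument is the forward direction, and the main obstacle is spotting that violating $m S_e \neq S_e$ forces a pair of twin vertices at $0$ and~$p$; everything else is routine parity bookkeeping modulo~$2p$ combined with \cref{2p} and~\cref{Wilson}.
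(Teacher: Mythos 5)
Your proposal is correct and follows essentially the same route as the paper: apply \cref{2p} to get Wilson type~\pref{Wilson-C4}, split $S=S_e\cup S_o$ by parity (using that $m$ is odd and translation by $p$ swaps parity) to obtain $S_o=p+mS_e$ and $S_e=m^2S_e$, and rule out $mS_e=S_e$ because it would force $p+S=S$, contradicting twin-freeness; the converse is the same direct verification of type~\pref{Wilson-C4}. The only difference is that you spell out a few computations the paper leaves implicit.
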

\end{thmref}

It also makes it possible to strengthen \cref{OddCirculant} to a determination of all the possible orders of nontrivially unstable circulant graphs:

\begin{thmref}{orders}
\begin{cor}
For $n \in \ZZ^+$, there does \textbf{not} exist a nontrivially unstable circulant graph of order~$n$ if and only if either $n$~is odd, or $n < 8$, or $n = 2p$, for some prime number $p \equiv 3 \pmod{4}$.
\end{cor}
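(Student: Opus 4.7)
The plan is to prove the two directions of the biconditional separately. For the nonexistence direction (no nontrivially unstable circulant of order $n$ when $n$ is in the listed family), the case $n$ odd is immediate from \cref{OddCirculant}; for $n < 8$ only the even values $n \in \{2, 4, 6\}$ remain, and a direct enumeration of the few symmetric subsets $S \subseteq \ZZ_n \setminus \{0\}$ shows that no corresponding Cayley graph is simultaneously connected, nonbipartite, twin-free, and unstable. The decisive sub-case is $n = 2p$ with $p \equiv 3 \pmod 4$ prime: arguing by contradiction, I would invoke \cref{2p} to conclude Wilson type~\pref{Wilson-C4}, then \cref{Describe2p} to extract $m \in \ZZ_{2p}^\times$ with $m^2 S_e = S_e$ but $mS_e \neq S_e$.

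Let $H = \{t \in \ZZ_{2p}^\times : tS_e = S_e\}$. Since $S_e = -S_e$ (because $S = -S$), $-1 \in H$, so $|H|$ is even. On the other hand, $mH$ has order~$2$ in the cyclic quotient $\ZZ_{2p}^\times / H$ (cyclic because $\ZZ_{2p}^\times \cong \ZZ_p^\times$ is cyclic of order $p-1$), so $[\ZZ_{2p}^\times : H]$ is also even. Multiplying these divisibilities yields $4 \mid p-1$, contradicting $p \equiv 3 \pmod 4$.

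For the existence direction, I would split into two sub-cases. If $n = 2p$ with $p \equiv 1 \pmod 4$ odd prime, then $4 \mid p-1$ provides $m \in \ZZ_p^\times$ with $m^2 \equiv -1 \pmod p$; setting $S_e = \{\pm 2\}$ and $S = S_e \cup (p + mS_e)$ gives $m^2 S_e = S_e$ and $mS_e \neq S_e$, and a short parity check verifies that $\Cay(\ZZ_{2p}, S)$ is connected, nonbipartite, and twin-free, so \cref{Describe2p} applies. For the remaining $n$ (even with $n \geq 8$ and $n/2$ composite), let $q$ be the smallest prime dividing $n/2$ and let $H$ be the subgroup of $2\ZZ_n$ of order~$q$; then I would take $S_e$ to be a nonempty symmetric union of non-identity $H$-cosets (for instance, $C \cup (-C)$ for a suitable coset $C$), set $S = S_e \cup \{\pm 1\}$, and apply Wilson's condition~\pref{Wilson-C1} with any nonzero $h \in H$ to obtain instability; connectedness is forced by $\pm 1 \in S$, nonbipartiteness by $S_e \neq \emptyset$, and twin-freeness by another parity analysis.

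The hard part is the quotient-group argument in the $p \equiv 3 \pmod 4$ case: it is the combination of the two factors of~$2$---one from $-1 \in H$, the other from the order-$2$ coset $mH$---that forces $4 \mid p-1$, and it is here that \cref{2p} and \cref{Describe2p} play an essential role. By contrast, the existence constructions and the small-case enumeration are elementary; the only minor technicality is the twin-free verification, which in each construction reduces to a parity-based case split of the candidate shifts.
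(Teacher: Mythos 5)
Your proposal is correct, and its overall architecture matches the paper's: both directions are handled contrapositively, existence is established by explicit Wilson-type~\pref{Wilson-C1} constructions when $n/2$ is composite and Wilson-type~\pref{Wilson-C4} constructions when $n/2$ is a prime $\equiv 1 \pmod 4$ (your $S = \{\pm 2\} \cup (p + m\{\pm 2\})$ is the even/odd mirror of the paper's $S = \{\pm 1, \nh \pm m\}$), and the decisive case $n = 2p$ with $p \equiv 3 \pmod 4$ is reduced to Wilson type~\pref{Wilson-C4} via \cref{2p}. Where you genuinely diverge is the final contradiction in that decisive case: the paper works directly from $S = mS + \nh$, factors $m = m_o m_2$ into its odd-order and $2$-power-order parts in $\ZZ_{2p}^\times$, uses $p - 1 \equiv 2 \pmod 4$ to force $m_2 = \pm 1$, and iterates to get $S = S + \nh$, contradicting twin-freeness; you instead pass through \cref{Describe2p} to get $m^2 S_e = S_e$ with $m S_e \neq S_e$, and then apply Lagrange's theorem to the stabilizer $H$ of $S_e$: $-1 \in H$ gives $2 \mid |H|$, the order-$2$ coset $mH$ gives $2 \mid [\ZZ_{2p}^\times : H]$, and together $4 \mid p-1$. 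Both arguments exploit exactly the fact that $|\ZZ_{2p}^\times| \equiv 2 \pmod 4$, but your stabilizer version is arguably cleaner and avoids the iteration; the paper's version has the minor advantage of not needing the intermediate \cref{Describe2p}. The items you leave as ``parity checks'' (twin-freeness of the constructed examples, and the enumeration for $n \in \{2,4,6\}$) do require the care the paper gives them --- in particular, ruling out a twin-shift $h$ that swaps $S_e$ and $S_o$, which forces $h = \nh$ and $|S_e| = 2$ before the contradiction appears --- but they go through as you indicate.
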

\end{thmref}

Here is an outline of the paper. 
After this introduction comes a short section of preliminaries.  
Our main results, which were described above, are proved in 
 	\cref{InstabilitySect} (\cref{WilsonC1C3,IsoTranslateS,XeUnstable})
	and 
	\cref{2pSect} (\cref{2p} and its corollaries). 
In between, \cref{mSSect} discusses a lemma from~\cite{Morris-OddAbelian}.
Finally, \cref{ComputationalSect} briefly presents a few minor results that were obtained by computer exploration.

\begin{ack}
The work of Ademir Hujdurovi\'c  is supported in part by the Slovenian Research Agency (research program P1-0404 and research projects N1-0102, N1-0140, N1-0159, N1-0208, J1-1691, J1-1694, J1-1695, J1-2451 and J1-9110).
\end{ack}

\section{Preliminaries}

For emphasis, and ease of reference, we repeat a basic assumption from the first paragraph of the introduction (and add an exception):

\begin{assump} \label{GraphsAreSimple}
All graphs in this paper are finite, undirected, and simple (no loops or multiple edges), except that loops will be allowed in \cref{mSSect} (see \cref{AllowLoops}).
\end{assump}

\begin{notation} \label{nh}
For convenience, proofs will sometimes use the following abbreviation:
	\[ \nh = n/2 .\]
\end{notation}

\begin{defn} \label{CayleyDefn}
Let $S$ be a subset of an abelian group~$G$, such that $-s \in S$ for all $s \in S$, and $0 \notin S$.
	\begin{enumerate}
	\item The \emph{Cayley graph} $\Cay(G, S)$ is the graph whose vertices are the elements of~$G$, and with an edge from $v$ to~$w$ if and only if $w = v + s$ for some $s \in S$ (cf.\ \cite[\S1]{Li-IsoSurvey}).
	\item For $s \in S$, we let $\tilde s = (s,1)$. 
	\item Note that if $X = \Cay(G, S)$, and we let $\widetilde S = \{\, \tilde s \mid s \in S \,\}$, then
		\[ B X = \Cay \bigl( G \times \ZZ_2, \widetilde S \, \bigr) . \]
	For $s \in S$, we say that an edge $uv$ of $BX$ is an \emph{$s$-edge} if $v = u \pm \tilde s$.
	\end{enumerate}
\end{defn}

\begin{rem} \label{no0}
The reason that the set~$S$ in $\Cay(G, S)$ is not allowed to contain~$0$ is that \cref{GraphsAreSimple} forbids the graphs in this paper from having loops.
\end{rem}

\begin{defn}[Kotlov-Lovász {\cite{KotlovLovasz-twinfree}}] \label{TwinFreeDefn} 
A graph~$X$ is \emph{twin-free} if there do not exist two distinct vertices that have exactly the same neighbors.
\end{defn}

\begin{rem} \label{TwinFreeIffNoTranslation}
It is easy to see (and well known) that $\Cay(\ZZ_n, S)$ is twin-free if and only if there does not exist a nonzero $h \in \ZZ_n$, such that $h + S = S$.\refnote{TwinFreeIffNoTranslation-aid}
\end{rem}

The notion of ``block'' is a fundamental concept in the theory of permutation groups, but we need only the following special case:

\begin{defn}[cf.\ {\cite[pp.~12--13]{DixonMortimer}}] \label{BlockDefn}
Let $X = \Cay(\ZZ_n, S)$ be a circulant graph. A nonempty subset~$\mathcal{B}$ of $V(BX)$ is a \emph{block} for the action of $\Aut BX$ if, for every $\alpha \in \Aut BX$, we have 
	\[ \text{either \ $\alpha(\mathcal{B}) = \mathcal{B}$ \ or \ $\alpha(\mathcal{B}) \cap \mathcal{B} = \emptyset$.} \]
It is easy to see that this implies $\mathcal{B}$ is a coset of some subgroup~$H$ of $\ZZ_n \times \ZZ_2$, and that every coset of~$H$ is a block.\refnote{BlockDefn-H} Indeed, the action of $\Aut BX$ permutes these cosets, so there is a natural action of $\Aut BX$ on the set of cosets.
\end{defn}

\begin{defn}[cf.\ {\cite[Defn.~3.1]{Li-IsoSurvey}}] \label{CIDefn}
To say that a circulant graph $X = \Cay(\ZZ_n, S)$ has the \emph{Cayley Isomorphism Property} means that if $S'$ is a subset of~$\ZZ_n$, such that $X \cong \Cay(\ZZ_n, S')$, then there exists $m \in \ZZ_n^\times$, such that $S' = m S$.
\end{defn}

\begin{thm}[Muzychuk \cite{Muzychuk-Adam}] \label{Muzychuk-CI}
If $n$ is either square-free, or twice a square-free number, then every Cayley graph on~$\ZZ_n$ has the Cayley Isomorphism Property.
\end{thm}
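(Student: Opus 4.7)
The plan is to follow Muzychuk's Schur-ring approach to \'Ad\'am's isomorphism problem. Suppose $\phi\colon\Cay(\ZZ_n,S)\to\Cay(\ZZ_n,S')$ is a graph isomorphism; composing with a translation I may assume $\phi(0)=0$, and the task becomes to produce $m\in\ZZ_n^\times$ with $mS=S'$. To each circulant $X=\Cay(\ZZ_n,S)$ I would attach its \emph{Schur ring} $\mathcal{S}(X)\subseteq\QQ[\ZZ_n]$, namely the transitivity module of the stabilizer $(\Aut X)_0$, whose canonical $\QQ$-basis consists of the indicator functions of the orbits of $(\Aut X)_0$ on $\ZZ_n$. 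By construction $S$ is a union of such ``basic sets'', and $\phi$ carries $\mathcal{S}(X)$ isomorphically onto $\mathcal{S}(X')$ inside $\QQ[\ZZ_n]$. The problem is now to compare these two Schur rings.

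The key ingredient is the structure theorem for Schur rings over a cyclic group of square-free or twice square-free order. For such $n$ the subgroup lattice of $\ZZ_n$ is distributive and chain-like on each prime component, and one can show (Leung--Man, building on Klin--P\"oschel, and essentially due to Muzychuk himself in this range) that every Schur ring over $\ZZ_n$ decomposes as a wedge/tensor product of Schur rings over the primary components. Each primary piece is either trivial, the full group algebra, or of ``multiplier-orbit'' type $\bigcup_{i\in I} m^i H$ for a subgroup $H$ and a unit~$m$. In particular, every basic set is of the form $m\cdot T$ for a unit $m\in\ZZ_n^\times$ and a ``rational'' set $T$ determined entirely by the divisor lattice.

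From this description the conclusion is close at hand: the isomorphism $\mathcal{S}(X)\to\mathcal{S}(X')$ must permute basic sets, and once one matches the rational skeleton on both sides the only freedom left is a single unit $m\in\ZZ_n^\times$ that takes the basic set containing $S$ to the basic set containing~$S'$, which forces $S'=mS$. For the twice-square-free case, the extra $\ZZ_2$-component contributes only the two trivial basic sets $\{0\}$ and $\{1\}$, so the structure theorem applies verbatim and no new case analysis is needed.

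The main obstacle is of course the structure theorem itself: establishing it requires an intricate induction over the divisor lattice of~$n$, and the (twice) square-free hypothesis is used at each step to rule out ``exotic primitive'' Schur rings --- which are exactly the source of the well-known counterexamples to \'Ad\'am's conjecture when $n$ is divisible by $p^2$ for an odd prime $p$, or by $8$. I would not attempt to re-derive this machinery here; Muzychuk's paper carries out the argument in full, bootstrapping from Turner's elementary treatment of $n=p$ via a Delsarte-style duality between Schur rings and their ``dual'' partitions together with careful analysis of the projections $\ZZ_n\to\ZZ_{n/p}$.
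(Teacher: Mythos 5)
This statement is not proved in the paper at all: it is quoted as an external result of Muzychuk, with \cite{Muzychuk-Adam} as the only justification, so there is no ``paper's own proof'' to compare against. Your proposal, by your own admission, is likewise not a proof but a survey of how the proof goes in the literature: the entire weight rests on the structure theorem for Schur rings over $\ZZ_n$ (and on the deduction of the CI property from it), which you assert and then explicitly decline to establish. As a stand-alone argument for the theorem it therefore has a genuine gap --- the one step that constitutes essentially all of Muzychuk's work is taken on faith. If the intent is simply to cite Muzychuk, that matches what the paper does and is fine; but then the intermediate Schur-ring narrative is neither needed nor checkable.

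Two points in the sketch are also misleading as stated. First, the claim that every basic set has the form $m\cdot T$ with $T$ ``rational'' and that ``the only freedom left is a single unit $m$'' oversimplifies: Schur rings over cyclic groups of square-free order can be generalized wreath (wedge) products, and an isomorphism between two such rings is a priori only a family of local multipliers on the factors; showing that these can be glued into one global $m\in\ZZ_n^\times$ acting on all of $S$ simultaneously is precisely the delicate part of Muzychuk's argument, not an afterthought. Second, the twice-square-free case is not a ``verbatim'' application of the square-free case with a trivial $\ZZ_2$-component tacked on --- $\ZZ_2$ is not a direct factor sitting inertly beside the odd part in the Schur-ring decomposition, and handling $n=2\cdot(\text{square-free})$ required additional, nontrivial analysis (the cited reference is in fact a corrigendum addressing exactly such issues). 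So even as a roadmap, the proposal understates where the real difficulties lie.
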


\section{Some conditions that imply instability} \label{InstabilitySect}

The following known result is elementary, but, for the reader's convenience, we briefly recall the proof.

\begin{lem}[{\cite[Thm.~3.2]{LauriMizziScapellato}, \cite[Prop.~4.2]{MarusicScapellatoSalvi}}] \label{UnstableIffPerms}
Let $X$ be a graph.
If there exist permutations $\alpha$ and~$\beta$ of $V(X)$, such that $\alpha \neq \beta$ and, for every edge~$uv$ of~$X$, the vertex $\alpha(u)$ is adjacent to $\beta(v)$, then $X$ is unstable.

The converse holds if $BX$ is connected.
\end{lem}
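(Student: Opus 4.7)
The plan for the forward direction is to build an explicit automorphism of $BX$ that is not of the canonical form. Given $\alpha, \beta$ satisfying the hypothesis, I would define $\gamma \colon V(BX) \to V(BX)$ by
\[ \gamma(v, 0) = (\alpha(v), 0), \qquad \gamma(v, 1) = (\beta(v), 1). \]
The adjacency condition on $\alpha, \beta$ translates directly to the statement that $\gamma$ sends every edge of $BX$ to an edge of $BX$; together with the fact that $\alpha, \beta$ are bijections and $BX$ is finite, this forces $\gamma$ to be an automorphism of $BX$. Since $\gamma$ fixes each part of the bipartition $V(X) \times \{0,1\}$ setwise, if it lay in $\Aut X \times S_2$ it would have to act by a single element $\sigma \in \Aut X$ on both parts simultaneously, forcing $\alpha = \beta = \sigma$, which contradicts $\alpha \neq \beta$.

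For the converse, suppose $BX$ is connected and $X$ is unstable, so there exists some $\gamma \in \Aut BX \setminus (\Aut X \times S_2)$. Because $BX$ is a connected bipartite graph, its bipartition is unique, and hence every element of $\Aut BX$ either preserves each of the two parts $V(X) \times \{i\}$ or swaps them. The swap $(v, i) \mapsto (v, 1-i)$ lies in $\Aut X \times S_2$, so after composing $\gamma$ with it if necessary, I may assume $\gamma$ preserves each part. Writing $\gamma(v, 0) = (\alpha(v), 0)$ and $\gamma(v, 1) = (\beta(v), 1)$ for permutations $\alpha, \beta$ of $V(X)$, the edge-preservation property is just the statement that $\gamma$ is a graph automorphism of $BX$, and $\gamma \notin \Aut X \times S_2$ forces $\alpha \neq \beta$.

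The only nontrivial step is the use of connectedness of $BX$ in the converse: it is exactly what guarantees that the bipartition is unique, so that $\Aut BX$ splits cleanly into bipartition-preserving and bipartition-swapping elements, allowing an arbitrary $\gamma \in \Aut BX$ to be converted into a pair of permutations of~$V(X)$. Without connectedness, an automorphism of $BX$ could shuffle the two sides together in a way that does not arise from any such pair.
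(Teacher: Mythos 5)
Your proposal is correct and follows essentially the same route as the paper's proof: the forward direction builds the automorphism $\gamma(v,i)$ acting by $\alpha$ on $V(X)\times\{0\}$ and by $\beta$ on $V(X)\times\{1\}$, and the converse uses connectedness of $BX$ to get uniqueness of the bipartition, composes with a part-swapping element of $\Aut X \times S_2$ if needed, and reads off $\alpha$ and $\beta$ from the restrictions to the two parts. If anything, you spell out slightly more detail than the paper (why edge-preservation plus bijectivity gives an automorphism, and why a part-preserving element of $\Aut X \times S_2$ forces $\alpha=\beta$), which is fine.
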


\begin{proof}[Sketch of proof]
($\Rightarrow$) Define $\varphi \in \Aut(BX)$ by 
	\[ \varphi(v,i) = \begin{cases}
		\bigl( \alpha(v), i \bigr) & \text{if $i = 0$}, \\
		\bigl( \beta(v) , i \bigr) & \text{if $i = 1$}
		. \end{cases} \]
Since $\alpha \neq \beta$, we have $\varphi \notin \Aut X \times S_2$.

($\Leftarrow$) Let $\varphi \in \Aut BX$, such that $\varphi \notin \Aut X \times S_2$.
Since $BX$ is connected, we know that its bipartition is unique, so the bipartition sets are blocks for the action of $\Aut BX$. Therefore, after composing by a translation, we may assume $\varphi \bigl( \ZZ_n \times \{i\} \bigr) =  
\ZZ_n \times \{i\}$ for $i = 0,1$. So we may define permutations $\alpha$ and~$\beta$ of $V(X)$ by
	\[ \text{$\varphi(v,0) = \bigl( \alpha(v), 0 \bigr)$
	\ and \ 
	$\varphi(v,1) = \bigl( \beta(v), 1 \bigr)$}
	. \qedhere \]
\end{proof}

\begin{thm}[cf.\ {\cite[Thms.~1, C.1, and C.3]{Wilson}}] \label{WilsonC1C3}
Let $X = \Cay(\ZZ_n, S)$ be a circulant graph.
Choose nontrivial subgroups~$H$ and~$K$ of\/~$\ZZ_n$, such that $|K|$ is even, and let $K_o = K \setminus 2K$. If either 
	\begin{enumerate}
	\item \label{WilsonC1C3-inKo}
	$S + H\subseteq S \cup (K_o + H)$ and $H \cap K_o = \emptyset$,
	or
	\item \label{WilsonC1C3-inS}
	$(S \setminus K_o) + H \subseteq S \cup K_o$ and either\/ $|H| \neq 2$ or\/ $|K|$ is divisible by\/~$4$,
	\end{enumerate}
then $X$ is not stable.
\end{thm}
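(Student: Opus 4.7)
By \cref{UnstableIffPerms}, it suffices to exhibit permutations $\alpha \ne \beta$ of $\ZZ_n$ with the property that $v - u \in S$ implies $\beta(v) - \alpha(u) \in S$. The strategy in both parts is to choose a suitable nonzero element $h$ (of $H$, or of a related subgroup like $H \cap 2K$) and then define $\alpha,\beta$ as ``piecewise translations'' by $h$, each being the identity on one of two complementary, $H$-invariant subsets of $\ZZ_n$ and a translation by $h$ on the other. The differing choice of which half gets translated guarantees $\alpha \ne \beta$, while the edge condition will follow from the hypothesis together with a case analysis based on cosets of $2K + H$ in $K + H$.

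For part (1), set $T := K_o + H$. Then $T + H = T$ (so $T$ and its complement are unions of $H$-cosets), and the hypothesis $H \cap K_o = \emptyset$ ensures $0 \notin T$; it also implies that the subgroup $K + H$ decomposes as the disjoint union $(2K + H) \sqcup (K_o + H)$, so $2K + H$ is an index-2 subgroup of $K + H$ with $T$ as the nontrivial coset. Combined with $S + H \subseteq S \cup T$, one checks that $(S \setminus T) + H = S \setminus T$. Now pick a nonzero $h \in H$ and define
\[
	\alpha(v) = v + h \cdot [v \in T], \qquad \beta(v) = v + h \cdot [v \notin T].
\]
Both are permutations (since $T + h = T$), and $\alpha(0) = 0 \ne h = \beta(0)$. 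Writing $s = v - u \in S$, one finds $\beta(v) - \alpha(u) \in \{s, s+h, s-h\}$ depending on which of $T, \ZZ_n\setminus T$ contains $u,v$. The cases producing $s \pm h$ require the target to lie in $S$; this is handled by invoking the $H$-invariance of $S \setminus T$ together with the coset partition of $K + H$ described above (to show that when the ``diagonal'' cases occur, $s$ is forced to lie in $S \setminus T$, whence $s \pm h$ does too).

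For part (2), define the translation set to be $K_o$ itself (a coset of the subgroup $2K$). Now for the piecewise-translation construction to produce a permutation, one needs a nonzero $h$ with $K_o + h = K_o$, i.e., $h \in H \cap 2K$; the two alternatives in the hypothesis guarantee such an $h$ exists --- if $|K|$ is divisible by $4$, then $2K$ has even order and one uses an order-$2$ element of $2K$ combined with $H$; if $|H| \ne 2$, a three-element subgroup of $H$ interacts with $K_o$ favorably to produce the needed $h$. With this $h$, define $\alpha$ and $\beta$ as in part (1) with $T$ replaced by $K_o$, and verify the edge condition using $(S \setminus K_o) + H \subseteq S \cup K_o$, again by a case analysis.

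The main obstacle is verifying the edge condition in the two ``diagonal'' subcases where $u$ and $v$ lie on the same side of the defining partition, particularly when $s = v - u$ lies in the translation set ($T$ or $K_o$). Here the shifted difference equals $s \pm h$ rather than $s$, and only the hypothesis $H \cap K_o = \emptyset$ (in part (1)) or the alternatives on $|H|$ and $|K|$ (in part (2)) prevent the construction from breaking. Careful bookkeeping of which coset of $2K + H$ each of $u, v, s$ lies in, coupled with the structural consequence that these cosets come in $\ZZ_2$-paired fashion under translation by $K_o$, closes the argument.
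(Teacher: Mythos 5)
Your overall strategy (produce $\alpha\neq\beta$ and invoke \cref{UnstableIffPerms}, with $\alpha,\beta$ piecewise translations by an element of~$H$) is the right one, and it is the paper's strategy too. But the specific design of your permutations is flawed, and the flaw is not repairable by ``careful bookkeeping.'' In part~\pref{WilsonC1C3-inKo} you make $\alpha$ translate on $T=K_o+H$ and $\beta$ translate on the \emph{complement} of~$T$. This creates a diagonal case --- $u\notin T$ and $v\notin T$ --- in which $\beta(v)-\alpha(u)=s+h$ with no control on~$s$: your claim that in the diagonal cases $s$ is forced into $S\setminus T$ is true when $u,v\in T$ (then $s\in T-T=2K+H$, which is disjoint from~$T$), but false when $u,v\notin T$. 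Concretely, take $n=24$, $K=\{0,12\}$ (so $K_o=\{12\}$), $H=\{0,8,16\}$, and $S=\{12\}\cup(\pm1+H)$; the hypotheses of~\pref{WilsonC1C3-inKo} hold, yet for the edge from $u=1$ to $v=13$ (with $s=12\in T$) both endpoints lie outside $T=\{4,12,20\}$, and your maps give $\beta(13)-\alpha(1)=20\notin S$. The paper avoids this by having $\alpha$ translate on $2K+H$ and $\beta$ on $K_o+H$ --- two \emph{disjoint} sets whose union is $K+H$, so every vertex outside $K+H$ is fixed by both maps and the only nontrivial case is ``exactly one endpoint moved,'' which forces $s\notin K_o+H$ and lets the hypothesis finish the argument.

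Part~\pref{WilsonC1C3-inS} has a more basic problem: your construction requires a nonzero $h\in H\cap 2K$ (so that $K_o+h=K_o$), and you assert that the alternatives ``$|H|\neq2$ or $4\mid|K|$'' guarantee such an~$h$. They do not: take $n=12$, $K=\{0,6\}$, $H=\{0,4,8\}$; then $|H|=3\neq2$ but $H\cap 2K=\{0\}$. (Also, $|H|\neq 2$ does not give a three-element subgroup of~$H$, and an order-$2$ element of $2K$ need not lie in~$H$.) In the paper, the role of that hypothesis is entirely different: it only rules out the degenerate situation $\alpha=\beta$, which can occur precisely when $|H|=2$ and the generator $h$ of~$H$ lies in~$K_o$. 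The paper's actual construction for~\pref{WilsonC1C3-inS} first reduces to part~\pref{WilsonC1C3-inKo} when $h\in 2K$, and otherwise uses two-sided translations ($x\mapsto x+h$ on $2K$ and $x\mapsto x-h$ on $2K+h$ for~$\alpha$; similarly with $K_o$ for~$\beta$), again fixing everything outside $K\cup(K+h)$, followed by a four-case analysis. You would need to redesign both constructions along these lines for the proof to go through.
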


\begin{proof}[Proof \normalfont (cf.\ proof of {\cite[Thm.~1]{Wilson}})]
Let $h$ be a generator of~$H$. We will define permutations $\alpha$ and~$\beta$ of~$\ZZ_n$, such that \cref{UnstableIffPerms} applies.

\medbreak

\pref{WilsonC1C3-inKo}
Define
	\begin{align*}
	 \alpha(x) = \begin{cases}
		x + h & \text{if $x \in 2K + H$}; \\
		x & \text{otherwise};
		 \end{cases} 
	&&
		\beta(x) = \begin{cases}
		x + h & \text{if $x \in K_o + H$}; \\
		x & \text{otherwise}
		. \end{cases} \end{align*}
Note that $0 \notin K_o + H$ (because $H \cap K_o = \emptyset$), so $\beta(0) = 0$.
Since $\alpha(0) = h$, this implies $\alpha \neq \beta$.

Given an edge $uv$ of~$X$, we wish to show that $\alpha(u)$ is adjacent to~$\beta(v)$.  We may assume that either $u$ is moved by~$\alpha$ or $v$~is moved by~$\beta$. In fact, we may assume that exactly one of the vertices is moved, for otherwise, 
	\[ \beta(v) - \alpha(u) = (v + h) - (u + h) = v - u \in S . \]
This means we may assume that either $u \in 2K + H$ or $v \in K_o + H$, but not both. 
Letting $s = v - u \in S$, this implies $s \notin K_o + H$.

Also, we have 
	\[ \beta(v) - \alpha(u) \in (v + H) - (u + H) = (v - u) + H = s + H , \]
so we may write $\beta(v) - \alpha(u) = s + h'$, for some $h' \in H$.
By the first assumption of~\pref{WilsonC1C3-inKo}, we know $s + h' \in S \cup (K_o + H)$. Since $s \notin K_o + H$, this implies $s + h' \in S$, so $\alpha(u)$ is adjacent to $\beta(v)$.

\medbreak

\pref{WilsonC1C3-inS}
If $h \in 2K$, then  $K_o + H = K_o$, so
	\[ ( S \cap K_o) + H \subseteq K_o + H = K_o . \]
Since, by the first assumption of~\pref{WilsonC1C3-inS}, we also have
	$( S \setminus K_o) + H \subseteq S \cup K_o$,
this implies that \pref{WilsonC1C3-inKo} applies.
Therefore, we may assume 
	\[ h \notin 2K . \]

Define
	\begin{align*}
	 \alpha(x) = \begin{cases}
		x + h & \text{if $x \in 2K$}; \\
		x - h & \text{if $x \in 2K + h$}; \\
		x & \text{otherwise}
		 \end{cases} 
	&&
		\beta(x) = \begin{cases}
		x + h & \text{if $x \in K_o$}; \\
		x - h & \text{if $x \in K_o + h$}; \\
		x & \text{otherwise}
		. \end{cases} \end{align*}
		
We claim that $\alpha \neq \beta$. Note that $\alpha(0) = h$.
Therefore, if $\alpha = \beta$, then we must have $\beta(0) = h$. Since $0 \notin K_o$, this implies that $0 \in K_o + h$ (which means $h \in K_o$) and $-h = h$ (which means $|h| = 2$). Since $|h| = 2$ and $h \in K_o$, we see that $|H| = 2$ and that $|K|$ is not divisible by~$4$.\refnote{WilsonC1C3-inS-aid} This contradicts the second half of assumption~\pref{WilsonC1C3-inS}, so the proof of the claim is complete.

Given an edge $uv$ of~$X$, we wish to show that $\alpha(u)$ is adjacent to~$\beta(v)$. That is, we wish to show $\beta(v) - \alpha(u) \in S$. We have $v = u + s$ for some $s \in S$. We may assume
	\begin{align*}
	\beta(v) - \alpha(u) \neq v - u
	. \end{align*}
In particular, we cannot have both $\alpha(u) = u$ and $\beta(v) = v$. Therefore, 
	\[ \text{either \ $u \in 2K \cup (2K + h)$ \ or \ $v \in K_o \cup (K_o + h)$.} \]

\setcounter{case}{0}

\begin{case} \label{WilsonC1C3Pf-and}
Assume $u \in 2K \cup (2K + h)$ \textbf{and} $v \in K_o \cup (K_o + h)$.
\end{case}
We consider two different possibilities, but both of the arguments are very similar.

\begin{subcase}
Assume $u \in 2K$.
\end{subcase}
Then $\alpha(u) = u + h$. 
Since $\beta(v) - \alpha(u) \neq v - u$, this implies $\beta(v) \neq v + h$, so $v \notin K_o$. By the assumption of \cref{WilsonC1C3Pf-and}, this implies $v \in K_o + h$, so $\beta(v) = v - h$. Hence, $\beta(v) - \alpha(u) = s - 2h$.

We have $u \in 2K$ and $v \in K_o + h$, so $s = v - u \in K_o + h$, which means $s - h \in K_o$. Since $h \notin 2K$, this implies $s \notin K_o$ and $s - 2h \notin K_o$. Since $s \notin K_o$, the first assumption of \pref{WilsonC1C3-inS} tells us $s + H  \subseteq S \cup K_o$. Since $s - 2h \notin K_o$, this implies $s - 2h \in S$. So $\alpha(u)$ is adjacent to $\beta(v)$.

\begin{subcase}
Assume $u \in 2K + h$. 
\end{subcase}
We have $\alpha(u) = u - h$. 
Since $\beta(v) - \alpha(u) \neq v - u$, this implies $\beta(v) \neq v - h$, so $v \notin K_o + h$. By the assumption of \cref{WilsonC1C3Pf-and}, this implies $v \in K_o$, so $\beta(v) = v + h$. Hence, $\beta(v) - \alpha(u) = s + 2h$.

We have $u \in 2K + h$ and $v \in K_o$, so $s = v - u \in K_o - h$, which means $s + h \in K_o$. Since $h \notin 2K$, this implies $s \notin K_o$ and $s + 2h \notin K_o$. Since $s \notin K_o$, the first assumption of \pref{WilsonC1C3-inS} tells us $s + H  \subseteq S \cup K_o$. Since $s + 2h \notin K_o$, this implies $s + 2h \in S$. So $\alpha(u)$ is adjacent to $\beta(v)$.

\begin{case}
Assume \cref{WilsonC1C3Pf-and} does not apply.
\end{case}
As in \cref{WilsonC1C3Pf-and}, we consider two different possibilities, but both of the arguments are very similar.

\begin{subcase}
Assume $u \in 2K \cup (2K + h)$.
\end{subcase}
Choose $\delta \in \{0,1\}$, such that $u \in 2K + \delta h$.
We have $\alpha(u) = u + \epsilon h$, where $\epsilon = 1 - 2\delta$, and we also have $v \notin K_o \cup (K_o + h)$, since \cref{WilsonC1C3Pf-and} does not apply, so $\beta(v) = v$. Since $u \in 2K + \delta h$, but $u + s = v \notin K_o + \delta h$, we have $s \notin K_o$. So the first assumption of~\pref{WilsonC1C3-inS} tells us $s + H  \subseteq S \cup K_o$, so $s - \epsilon h \in S \cup K_o$. Since $\beta(v) - \alpha(u) = s - \epsilon h$, then we may assume $s - \epsilon h \in K_o$ (otherwise, $\alpha(u)$ is adjacent to $\beta(v)$, as desired), so $s \in K_o + \epsilon h$. 
Then
	\[ v = u + s \in (2K + \delta h) + (K_o + \epsilon h) = K_o + (\delta + \epsilon) h = K_o + (1 - \delta) h. \]
Since $1 - \delta \in \{0,1\}$, but $v \notin K_o \cup (K_o + h)$, this is a contradiction.

\begin{subcase}
Assume $v \in K_o \cup (K_o + h)$.
\end{subcase}
Choose $\delta \in \{0,1\}$, such that $v \in K_o + \delta h$.
We have $\beta(v) = v + \epsilon h$, where $\epsilon = 1 - 2\delta$, and we also have $u \notin 2K \cup (2K + h)$, since \cref{WilsonC1C3Pf-and} does not apply, so $\alpha(u) = u$. Since $v \in K_o + \delta h$, but $v - s = u \notin 2K + \delta h$, we have $s \notin K_o$. So the first assumption of~\pref{WilsonC1C3-inS} tells us $s + H  \subseteq S \cup K_o$, so $s + \epsilon h \in S \cup K_o$. Since $\beta(v) - \alpha(u) = s + \epsilon h$, then we may assume $s + \epsilon h \in K_o$ (otherwise, $\alpha(u)$ is adjacent to $\beta(v)$, as desired), so $s \in K_o - \epsilon h$.
Then
	\[ u = v - s \in (K_o + \delta h) - (K_o - \epsilon h) = 2K + (\delta + \epsilon) h = 2K + (1 - \delta) h. \]
Since $1 - \delta \in \{0,1\}$, but $u \notin 2K \cup (2K + h)$, this is a contradiction.
\end{proof}

\begin{rem} \label{RestateWilsonC1C3-inKo}
In the notation of \cref{WilsonC1C3}, it is clear that 
	\[ \bigl(S \cap (K_o + H) \bigr) + H \subseteq (K_o + H) + H = K_o + H . \]
Therefore, the first condition of~\fullref{WilsonC1C3}{inKo} can be restated as: 
	\[ \bigl( S \setminus (K_o + H) \bigr) + H \subseteq S . \]
\end{rem}

Wilson types~\pref{Wilson-C1}, \pref{Wilson-C2}, and~\pref{Wilson-C3} are special cases of \fullcref{WilsonC1C3}{inS}:

\begin{prop} \label{WilsonIsSpecialCase}
If $\Cay(\ZZ_n, S)$ has Wilson type~\pref{Wilson-C1}, \pref{Wilson-C2}, or~\pref{Wilson-C3}, then there are nontrivial subgroups $H$ and~$K$ of\/~$\ZZ_n$ that satisfy the conditions given in part~\pref{WilsonC1C3-inS} of \cref{WilsonC1C3} \textup(and $|K|$ is even\textup).
\end{prop}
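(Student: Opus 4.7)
The plan is to treat each Wilson type separately and, in each case, exhibit explicit nontrivial subgroups $H$ and $K$ of $\ZZ_n$ (with $|K|$ even) and verify both the containment $(S \setminus K_o) + H \subseteq S \cup K_o$ and the auxiliary clause that $|H| \neq 2$ or $4 \mid |K|$.

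For \pref{Wilson-C1}, take $K = \ZZ_n$ and $H = \langle h \rangle$. Then $S \setminus K_o = S_e$, and iterating the hypothesis $h + S_e = S_e$ yields $S_e + H = S_e \subseteq S$. If $|H| = 2$, then $h = n/2$, so the hypothesis $h \in 2\ZZ_n$ forces $4 \mid n = |K|$. For \pref{Wilson-C3}, take the supplied subgroup as $H$ and set $K = d\ZZ_n$. Since each $r \in R$ has $r/d$ odd, $R \subseteq K_o$; so any $s \in S \setminus K_o$ lies outside $R$ and, by the definition of $R$, satisfies $s + H \subseteq S$. If $|H| = 2$, then $H = \{0, n/2\}$, and because $n/d$ is even we have $n/2 \in d\ZZ_n$, so the \pref{Wilson-C3} alternative forces $H \subseteq 2d\ZZ_n$, which in turn gives $4 \mid (n/d) = |K|$.

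The real work is in \pref{Wilson-C2}. Set $K = 2\ZZ_n$ and $H = \langle h \rangle$ for the odd $h$ supplied by \pref{Wilson-C2}. Since $h$ is odd and $4 \mid n$, the order of $h$ in $\ZZ_n$ equals $n/\gcd(n,h)$ with $\gcd(n,h)$ odd, so $4 \mid |H|$ and in particular $|H| \neq 2$. For the containment, fix $s \in S \setminus K_o$ (so $s \equiv 0$, $1$, or $3 \pmod{4}$) and $k \in \ZZ$, and show $s + kh \in S \cup K_o$. Residues of $k$ producing $s + kh \equiv 2 \pmod{4}$ land in $K_o$ automatically. For the remaining residues, the invariance $S_o + 2h = S_o$ handles every odd $s$ paired with even $k$ (both sides lie in $S_o$); combined with one application of the boundary condition in \pref{Wilson-C2}---applied to $s$ or to $s + 2h$, according to whether $s \equiv -h$ or $s \equiv h \pmod{4}$---it also handles odd $s$ paired with odd $k$. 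The case $s \in S \cap 4\ZZ_n$ is the most delicate: two applications of the boundary condition, chained by $S_o + 2h = S_o$, move $s \in S$ to $s + 4h \in S$ via the intermediate points $s + h$ and $s + 3h$, which both powers an induction propagating $s + 4jh \in S$ along the $\langle 4h \rangle$-orbit and supplies the odd-$k$ memberships (since $s + h, s + 3h \in S_o$ and $S_o$ is invariant under $2h$). This modulo-$4$ bookkeeping is the only genuine obstacle.
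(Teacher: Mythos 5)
Your proof is correct and follows essentially the same route as the paper's: the same choices $K=\ZZ_n$ for \pref{Wilson-C1}, $K=2\ZZ_n$ for \pref{Wilson-C2}, and $K=\langle d\rangle$ for \pref{Wilson-C3}, with the same mod-$4$ bookkeeping in the \pref{Wilson-C2} case. The only cosmetic differences are that the paper handles $s\equiv 0\pmod 4$ by reducing to the odd case via $s+h$ and the identity $h+H=H$ (rather than your explicit induction along the $\langle 4h\rangle$-orbit), and it normalizes $h\equiv 1\pmod 4$ at the outset instead of tracking $\pm h$ throughout.
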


\begin{proof}
\pref{Wilson-C1}
Let $K = \ZZ_n$ and $H = \langle h \rangle$. Then
	\[ (S \setminus K_o) + H
	= S_e + \langle h \rangle
	= S_e
	 \subseteq S , \]
so the first condition of~\fullref{WilsonC1C3}{inS} is satisfied.
Also, since $h \in 2\ZZ_n = 2K$, it must be true that either $|H| \neq 2$ or $|K|$ is divisible by\/~$4$.\refnote{WilsonIsSpecialCase-C1-aid}

\pref{Wilson-C2} 
Let $H = \langle h \rangle$ and $K = 2\ZZ_n$. (Note that $|H| > 2$, since $h$ is odd and $n$~is divisible by~$4$.) Then 
	\[ K_o = \{\, x \in \ZZ_n \mid x \equiv 2 \pmod{4} \,\} . \]
We will show that $(S \setminus K_o) + H \subseteq S \cup K_o$.

We may assume $h \equiv 1 \pmod{4}$, by applying the graph automorphism $x \mapsto -x$ if necessary.
Fix some $s \in S \setminus K_o$. 

Suppose, first, that $s \not\equiv 0 \pmod{4}$ (and recall that $s \notin K_o$, so $s \not\equiv 2 \pmod{4}$), so $s$ is odd. This means $s \in S_o$, so we see from~\fullref{Wilson-C2}{So} that $s + 2kh \in S$ for all $k \in \ZZ$. If $s + 2kh \equiv 3 \pmod{4}$, then $s + (2k + 1)h \in S$ (by~\fullref{Wilson-C2}{s+b}). If $s + 2kh \equiv 1 \pmod{4}$, then $s + (2k + 1)h \in K_o$. Thus, we have $s + H \subseteq S \cup K_o$.

Now, suppose $s \equiv 0 \pmod{4}$. Then $s + h \in S$ (by~\fullref{Wilson-C2}{s+b}). Now, since $s + h \not\equiv 0 \pmod{4}$, the previous case tells us that $s + h + H \subseteq S \cup K_o$. Since $h + H = H$, this means $s + H \subseteq S \cup K_o$.

\pref{Wilson-C3} 
Let $K = \langle R \rangle = \langle d \rangle$. 
Since $n/d$ is even, we know that $K$ has even order. Then, since $r/d$~is odd for every $r \in R$, we see that $R \subseteq K_o$. 
By the definition of~$R$, this means $(S \setminus K_o) + H \subseteq S$, so the first condition of~\fullref{WilsonC1C3}{inS} is satisfied.

Also, since either $H \nsubseteq d\ZZ_n$ or $H \subseteq 2d\ZZ_n$, we know that either $H \nsubseteq K$ or $H \subseteq 2K$. If $H \nsubseteq K$ and $|H| = 2$, then it is clear that $H \cap K = \{0\} \subseteq 2K$. Thus, in both cases, we have $H \cap K \subseteq 2K$, which easily implies that either $|H| \neq 2$ or $|K|$ is divisible by~$4$.\refnote{WilsonIsSpecialCase-C3-aid}
\end{proof}

There is a strong converse to \cref{WilsonIsSpecialCase} when $n$ is not divisible by~$4$:

\begin{prop} \label{NotDivBy4=C1}
If $X = \Cay(\ZZ_n, S)$, $H$, and~$K$ satisfy the conditions of \fullcref{WilsonC1C3}{inS}, and $n$ is not divisible by~$4$, then $X$ has Wilson type~\pref{Wilson-C1}.
\end{prop}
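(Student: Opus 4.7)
The strategy is to produce a Wilson type~\pref{Wilson-C1} witness $h$ inside $H \cap 2\ZZ_n$. Once such a nonzero even $h \in H$ is found, the conclusion follows quickly: every $s \in S_e$ is even and (as checked below) $K_o$ contains only odd elements, so $s \notin K_o$; then $(S \setminus K_o) + H \subseteq S \cup K_o$ gives $s + h \in S \cup K_o$, and the fact that $s + h$ is even excludes $K_o$, forcing $s + h \in S_e$. Applying the same reasoning to $-h \in H$ upgrades this to $h + S_e = S_e$.

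The first technical task is to verify that $K_o$ consists of odd elements. Because $|K|$ divides~$n$ and $|K|$ is even while $4 \nmid n$, we have $|K| \equiv 2 \pmod{4}$. Writing $d = n/|K|$ so that $K = \langle d \rangle$, the identity $n/2 = d \cdot (|K|/2)$ expresses the odd integer $n/2$ as a product of two positive factors, forcing both of them to be odd; in particular $d$ is odd. Since $K_o$ is the set of odd multiples of~$d$, every element of $K_o$ is then odd in~$\ZZ_n$.

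The second task is to locate a nonzero element of $H \cap 2\ZZ_n$. The congruence $|K| \equiv 2 \pmod{4}$ combined with the ``either/or'' clause of \fullref{WilsonC1C3}{inS} forces $|H| \neq 2$. Since $4 \nmid n$, the order~$|H|$ is either odd or twice an odd number. If $|H|$ is odd, then $H$ has no element of order~$2$, so $H \subseteq 2\ZZ_n$ automatically and any nonzero element works. If $|H| = 2e$ with $e$ odd, then $|H| \neq 2$ forces $e \geq 3$, and twice a generator of~$H$ is a nonzero element of~$H$ of order~$e$ lying in~$2\ZZ_n$.

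No genuine obstacle is expected; the whole argument is powered by the observation that, when $4 \nmid n$, parity in $\ZZ_n$ interacts cleanly with both doubling and with subgroup orders. The only delicate point is arranging the witness~$h$ to lie in~$H$ itself, since that is precisely what allows the hypothesis on~$S$ to be invoked, and the case split in the previous paragraph is exactly what secures this.
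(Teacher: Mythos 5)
Your proposal is correct and follows essentially the same route as the paper: use $4\nmid n$ to rule out $4\mid |K|$ and hence force $|H|\neq 2$, extract a nonzero element of $H\cap 2\ZZ_n$, observe that $K_o$ contains only odd elements, and then read off $h+S_e=S_e$ from the hypothesis $(S\setminus K_o)+H\subseteq S\cup K_o$ by parity. The paper phrases the last step as a single chain of set inclusions for $S_e+(H\cap 2\ZZ_n)$ rather than element-by-element, but the content is identical.
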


\begin{proof}
Since $n$ is not divisible by~$4$, it is not possible for $|K|$ to be divisible by~$4$, so the second half of \fullcref{WilsonC1C3}{inS} tells us that $|H| > 2$. This implies that $H_e \coloneqq H \cap 2\ZZ_n$ is nontrivial. 
Also, since $n$ is not divisible by~$4$, we know that $K_o \cap 2\ZZ_n = \emptyset$, so
	\[ S_e + H_e
	= (S  \cap 2 \ZZ_n) + H_e
	\subseteq (S \setminus K_o) + H
	\subseteq S \cup K_o
	\subseteq S \cup (\ZZ_n \setminus 2 \ZZ_n) . \]
Since $H_e \subseteq 2\ZZ_n$, we also know that $S_e + H_e \subseteq 2 \ZZ_n$. Therefore, we conclude that $S_e + H_e \subseteq S_e$, so $X$ has Wilson type~\pref{Wilson-C1}.
\end{proof}

\begin{rem} \label{OnlyC1C4}
By combining \cref{WilsonIsSpecialCase,NotDivBy4=C1}, we see that if $X$ has a Wilson type, and $n$ is not divisible by~$4$, then $X$ must have Wilson type~\pref{Wilson-C1} or~\pref{Wilson-C4}.
\end{rem}

\begin{prop} \label{IsoTranslateS}
Assume $X = \Cay(\ZZ_n, S)$ is a circulant graph of even order. If $X \cong \Cay \bigl( \ZZ_n, S + (n/2) \bigr)$, then $X$ is unstable.
\end{prop}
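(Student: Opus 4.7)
The plan is to apply \cref{UnstableIffPerms}, which reduces the instability of $X$ to producing two distinct permutations $\alpha, \beta$ of $\ZZ_n$ such that adjacent pairs in $X$ are sent to adjacent pairs by $(\alpha,\beta)$. The isomorphism hypothesis gives exactly the raw material needed to build such a pair.

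Let $\nh = n/2$, and let $\varphi \colon \ZZ_n \to \ZZ_n$ be a graph isomorphism from $X = \Cay(\ZZ_n, S)$ to $\Cay(\ZZ_n, S + \nh)$. Thus, whenever $v - u \in S$, we have $\varphi(v) - \varphi(u) \in S + \nh$. The plan is to set
\[
  \alpha(u) = \varphi(u), \qquad \beta(v) = \varphi(v) - \nh.
\]
Both are permutations of $\ZZ_n$ (the second because it is $\varphi$ followed by a translation), and $\alpha(v) - \beta(v) = \nh \neq 0$ since $n$ is even, so $\alpha \neq \beta$.

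For any edge $uv$ of $X$ we have $v - u \in S$, hence $\varphi(v) - \varphi(u) \in S + \nh$, and therefore
\[
  \beta(v) - \alpha(u) = \bigl( \varphi(v) - \nh \bigr) - \varphi(u) \in (S + \nh) - \nh = S,
\]
so $\alpha(u)$ is adjacent to $\beta(v)$ in $X$. An appeal to \cref{UnstableIffPerms} then finishes the proof.

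There is essentially no obstacle here; the only thing to watch is that the definition of $\beta$ must shift by exactly $\nh$ (so that $\alpha \neq \beta$ uses precisely the evenness of $n$) and that the Cayley isomorphism is used only as a graph isomorphism, not as a group homomorphism. In particular, this argument makes no assumption that $\varphi$ is of the form $x \mapsto mx$, which is why the proposition is genuinely stronger than Wilson type~\pref{Wilson-C4}.
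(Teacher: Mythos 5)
Your proposal is correct and is essentially identical to the paper's proof: the paper also takes $\alpha = \varphi$ and $\beta(v) = \varphi(v) + \nh$ (which equals your $\varphi(v) - \nh$ since $2\nh = 0$ in $\ZZ_n$) and applies \cref{UnstableIffPerms} in the same way.
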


\begin{proof}
If $\alpha$ is an isomorphism from $\Cay(\ZZ_n, S)$ to $\Cay(\ZZ_n, S + \nh)$, then \cref{UnstableIffPerms} applies with $\beta(x) = \alpha(x) + \nh$.\refnote{IsoTranslateS-aid}
\end{proof}

\begin{rems}
\leavevmode
	\begin{enumerate}
	\item \Cref{IsoTranslateS} is a generalization of Wilson type~\pref{Wilson-C4}. 
	To see this, note that if $\nh + mS = S$, then $mS = S + \nh$. Also, it is well known that if $m \in \ZZ_n^\times$, then $\Cay(\ZZ_n, S) \cong \Cay(\ZZ_n, mS)$ (cf.\ \cite[Lem.~3.7.3, p.~48]{GodsilRoyle}). Therefore, we conclude that $\Cay(\ZZ_n, S) \cong \Cay(\ZZ_n, S + \nh)$, so \cref{IsoTranslateS} applies.
	\item M.\,Muzychuk \cite{Muzychuk-IsoCirculant} found an efficient method to check whether two circulant graphs (such as $\Cay(\ZZ_n, S)$ and $\Cay(\ZZ_n, S + \nh)$) are isomorphic.
	\end{enumerate}
\end{rems}

Here is a family of examples that illustrate \cref{IsoTranslateS}:

\begin{eg} \label{IsoTranslateSEg}
Let $n = 2 p^2$, where $p$~is prime and $p \equiv 1 \pmod{4}$, and choose $c \in \ZZ$, such that $c^2 \equiv -1 \pmod{p}$. Fix some $a \in \ZZ_n$ of order~$p$, and let $S = S_e \cup S_o$, where
	\begin{align*}
	S_e &= \bigl(\pm 2 + \langle a \rangle\bigr) \cup \{\pm a\} \subseteq 2 \ZZ_n ,  \\
	S_o' &= \bigl( \pm 2 + \langle a \rangle\bigr) \cup \{\pm c a \}\subseteq 2 \ZZ_n, \\
	S_o &= \nh + S_o' \subseteq 1 + 2 \ZZ_n
	. \end{align*}
Then 
	\begin{enumerate}
	\item \label{IsoTranslateSEg-iso}
	$\Cay(\ZZ_n, S) \cong \Cay \bigl( \ZZ_n, S + (n/2) \bigr)$, so \cref{IsoTranslateS} implies that $\Cay(\ZZ_n, S)$ is (nontrivially) unstable, but
	\item \label{IsoTranslateSEg-notWilson}
	$\Cay(\ZZ_n, S)$ does not have a Wilson type.
	\end{enumerate}
\end{eg}

\begin{proof}
\pref{IsoTranslateSEg-iso}
Choose a set~$\mathcal{R}$ of coset representatives for $\langle a \rangle$ in~$\ZZ_n$, such that $\mathcal{R} + \nh = \mathcal{R}$, and define $\alpha \colon \ZZ_n \to \ZZ_n$ by 
	\[ \text{$\alpha(r + x) = r + cx$ \ for $r \in \mathcal{R}$ and $x \in \langle a \rangle$} .\]
It suffices to show that if $v,w \in \ZZ_n$, such that $v - w \in S$, then $\alpha(v) - \alpha(w) \in S + \nh$.

First, consider two vertices $v = r + x$ and $w = r + y$ that are in the same coset of $\langle a \rangle$. Then the definition of~$S$ implies that $v - w = \pm a$, so $y = x \pm a$, so 
	\[ \alpha(v) - \alpha(w) 
	= \bigl( r + cx \bigr) -  \bigl( r + c (x \pm a) \bigr)
	= \pm ca \in S_o'
	= S_o + \nh . \]

Next, suppose $v \in w + \nh + \langle a \rangle$. Assume, without loss of generality, that $v \in 1 + 2\ZZ_n$ and $w \in 2\ZZ_n$. Write $v = r + \nh + x$ and $w = r + y$ with $r \in \mathcal{R}$ and $x,y \in \langle a \rangle$. The definition of~$S$ implies that $v - w = \nh \pm ca$, so $y = x \pm ca$, so (using the fact that $c^2 \equiv -1 \pmod{p}$) we have
 	\[ \alpha(v) - \alpha(w) 
	= \bigl( r + \nh + cx \bigr) -  \bigl( r + c (x \pm ca) \bigr)
	= \nh \pm c^2a
	= \nh \mp a
	\in \nh + S_e
	 . \]

We may now assume that $v$ and~$w$ are in two different cosets of $\langle a, \nh \rangle$. Then, from the definition of~$S$, we see that every vertex in $v + \langle a \rangle$ is adjacent to every vertex in $w + \langle a \rangle$. Since $\alpha(v) \in v + \langle a \rangle$ and $\alpha(w) \in w + \langle a \rangle$, it is therefore obvious that $\alpha(v)$ is adjacent to $\alpha(w)$.

\pref{IsoTranslateSEg-notWilson}
The proof is by contradiction.

Suppose, first, that $\Cay(\ZZ_n, S)$ has Wilson type \pref{Wilson-C1}, \pref{Wilson-C2}, or~\pref{Wilson-C3}. Then \cref{OnlyC1C4} tells us that the graph actually has Wilson type~\pref{Wilson-C1}, so $h+S_e=S_e$ for some non-zero $h \in 2\ZZ_n$.
Since $2\ZZ_n \cong \ZZ_{p^2}$, we know that $|h|$ is divisible by~$p$. Since $S_e$ is a union of cosets of~$\langle h \rangle$, this implies that $|S_e|$ is divisible by~$p$, which contradicts the fact that $|S_e| = 2|a| + 2 = 2p + 2$.

We may now assume that the graph is of Wilson type \pref{Wilson-C4}. Then we can find $m\in \ZZ_{2p^2}^\times$, such that $mS+\nh=S$. Since $\nh$ is odd, this implies $m S_e + \nh = S_o$, so $m S_e = S_o'$. By passing to the quotient group $2\ZZ_n / \langle a \rangle$, we conclude that $m \equiv \pm 1 \pmod{p}$.\refnote{IsoTranslateSEg-notWilson-aid} So $ma = a \notin S_o'$. This contradicts the fact that $m S_e = S_o'$.
\end{proof}

It is shown in \cite{HujdurovicMitrovicMorris} that every nontrivially unstable circulant graph of valency $\le 7$ has a Wilson type, so the following examples have minimal valency among those that do not have a Wilson type:

\begin{eg} \label{val8Eg}
Let $n \coloneqq 3 \cdot 2^\ell$, where $\ell \ge 4$ is even, and let
	\[ S \coloneqq \left\{ \pm 3, \pm 6, \pm \frac{n}{12}, \frac{n}{2} \pm 3 \right\} . \]
Then the circulant graph $X \coloneqq \Cay(\ZZ_n, S)$ has valency~$8$ and is nontrivially unstable, but does not have a Wilson type.
\end{eg}

\begin{proof}
It is easy to see that $X$ is connected, nonbipartite, and twin-free.\refnote{val8Eg-aid}
For convenience, let $a = n/12$.

(unstable) 
Let $\rho_0 \colon 2\ZZ_n \to \ZZ_2$ be the homomorphism with kernel $4\ZZ_n$. Then define $\rho \colon \ZZ_n \to \ZZ_2$ by
	\[ \rho(v) = \begin{cases}
	\rho_0(v) & \text{if $v \in 2\ZZ_n$} ; \\
	\rho_0(v + 1) & \text{otherwise} 
	. \end{cases} \]
Finally, we let $m \coloneqq (n/6) - 1$, and define 
	\[ \alpha(v) = mv + \rho(v) \nh . \]
We will show that $\alpha$ is an isomorphism from~$X$ to $X \cong \Cay(\ZZ_n, S + \nh)$, so \cref{IsoTranslateS} shows that $X$ is unstable.
Thus, given $v \in \ZZ_n$ and $s \in S$, we wish to show that 
	\begin{align} \label{val8EgPf-inS}
	\alpha(v + s) - \alpha(v) \in S + \nh
	. \end{align}

From the choice of~$m$, we have $m \cdot 3 = \nh - 3$, and a straightforward calculation shows that $m(\nh + 3) = -3$ (in~$\ZZ_n$),\refnote{val8Eg-unstable-aid}
so
	\[ m \{\pm 3, \nh \pm 3\} + \langle \nh \rangle = \{\pm 3, \nh \pm 3\} \subseteq S . \]
Since it is clear from the definition of~$\alpha$ that $\alpha(v + s) - \alpha(v) \in ms + \langle \nh \rangle$, this implies that \pref{val8EgPf-inS} holds for $s \in \{\pm 3, \nh \pm 3\}$.

To deal with the remaining elements $\pm 6$ and~$\pm a$ of~$S$, first note that $6 \notin 4\ZZ_n$ and $a = 2^{\ell-2} \in 4\ZZ_n$ (since $\ell \ge 4$), so $\rho_0(6) = 1$ and $\rho_0(a) = 0$. Hence, for all $v \in \ZZ_n$, we have
	\[ \text{$\rho(v \pm 6) = \rho(v) + 1$ and $\rho(v \pm a) = \rho(v)$.} \]
Therefore, we have
	\begin{align*}
	\alpha(v \pm 6) - \alpha(v) 
	&= \bigl( m(v \pm 6) + \rho(v  \pm 6) \nh \bigr) - \bigl( mv + \rho(v) \nh \bigr)
	\\&= \pm 6m + \bigl( \rho(v \pm 6) - \rho(v) \bigr) \nh
	\\&=  \pm(n - 6) + \nh
	\\&\equiv  \pm 6 + \nh
		&& \pmod{n}
	\\&\in S + \nh
	 .\end{align*}
Also note that $2^{\ell-1} \equiv 8 \pmod{12}$ (because $\ell - 1 \ge 3$ is odd), so we have $m = 2^{\ell-1} - 1 = 6k + 1$, where $k$ is odd. Therefore
	\begin{align*}
	\alpha(v \pm a) - \alpha(v) 
	&= \bigl( m(v \pm a) + \rho(v \pm a) \nh \bigr) - \bigl( mv + \rho(v) \nh \bigr)
	\\&= \pm ma + \bigl( \rho(v \pm a) - \rho(v) \bigr) \nh
	\\&= \pm (6k + 1)a + 0 \nh
	\\&= \pm (k \nh + a)
	\\&= \pm a + \nh
	\\&\in S + \nh
	 . \end{align*}

(no Wilson type)
First, suppose that the graph has Wilson type \pref{Wilson-C1}, \pref{Wilson-C2}, or \pref{Wilson-C3}. By \cref{WilsonIsSpecialCase}, this implies there are subgroups $H$ and~$K$ of~$\ZZ_n$ that satisfy the conditions of \fullcref{WilsonC1C3}{inS}. In particular, at least one coset of~$H$ is completely contained in~$S$. 

We claim that $H = \langle \nh \rangle$. Let $h = n/|H|$, so $h$ is a divisor of~$n$, and $H = \langle h \rangle$. Since $S$ contains a coset of~$H$, we know that at least one of any $h$ consecutive elements $x + 1, x + 2, \ldots, x + h$ of~$\ZZ_n$ is an element of~$S$.
Since $(\nh - 3) - (n/12) > n/3$, this easily implies $h = \nh$,\refnote{val8Eg-noWilson}  which completes the proof of the claim. 

Since $6 + \nh \notin S$ and $(n/12) + \nh \notin S$, we then see from the first half of \fullref{WilsonC1C3}{inS} that $6 \in K_o$ and $n/12 \in K_o$. This is impossible, because $n/12$ is divisible by~$4$, but $6$ is not.

Now suppose that the graph has Wilson type~\pref{Wilson-C4}. This means there is some $m \in \ZZ_n^\times$, such that $mS = S + \nh$. Since $\nh$ is even, this implies $mS_o = S_o + \nh$, so (perhaps after replacing $m$ with $-m$, we have $m \cdot 3 \in \{3, \nh + 3\}$. But then
	\[ m \cdot 6 + \nh = 2(m \cdot 3) + \nh \in 2 \{3, \nh + 3\} + \nh = \{6 + \nh\} . \]
Since $6 + \nh \notin S$, this is a contradiction.
\end{proof}

\begin{prop} \label{XeUnstable}
Assume $X = \Cay(\ZZ_n, S)$ is a circulant graph of even order.
If there exist permutations $\alpha$ and~$\beta$ of~$2 \ZZ_n$, and a subgroup~$H$ of~$2\ZZ_n$, such that:
	\begin{enumerate}
	\item $\alpha \neq \beta$,
	\item \label{XeUnstable-even}
	if the vertices $u,v \in 2 \ZZ_n$ are adjacent, then the vertices $\alpha(u)$ and $\beta(v)$ are also adjacent,
	\item \label{XeUnstable-odd}
	$s + H \subseteq S$, for all odd $s \in S$,
	and
	\item \label{XeUnstable-H}
	$\alpha(v) - v \in H$ and $\beta(v) - v \in H$, for all $v \in 2\ZZ_n$,
	\end{enumerate}
then $X$ is unstable.
\end{prop}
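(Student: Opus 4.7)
The plan is to apply \cref{UnstableIffPerms}: I will extend the given permutations $\alpha,\beta$ of $2\ZZ_n$ to permutations $\widehat\alpha,\widehat\beta$ of $\ZZ_n$ and verify that they preserve adjacency in the edge-mixed sense required by the lemma. The natural extension is to declare $\widehat\alpha$ and $\widehat\beta$ to act as the identity on the odd coset $1+2\ZZ_n$; this immediately gives permutations of $\ZZ_n$ with $\widehat\alpha\neq\widehat\beta$ (since already $\alpha\neq\beta$ on $2\ZZ_n$).

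Next I would check the adjacency-preservation condition for an arbitrary edge $uv$ of $X$, splitting into three cases according to the parities of $u$ and $v$. If both endpoints are even, the needed conclusion is exactly hypothesis~\pref{XeUnstable-even}. If both endpoints are odd, then $\widehat\alpha(u)=u$ and $\widehat\beta(v)=v$, so $\widehat\beta(v)-\widehat\alpha(u)=v-u\in S$ and the edge is preserved. The only substantive case is when exactly one endpoint is even; then $s\coloneqq v-u$ is odd. Say $u\in 2\ZZ_n$ and $v\in 1+2\ZZ_n$ (the other subcase is symmetric). Then
\[
\widehat\beta(v)-\widehat\alpha(u)=v-\alpha(u)=(v-u)+\bigl(u-\alpha(u)\bigr)=s+h
\]
for some $h\in H$ by hypothesis~\pref{XeUnstable-H}, and hypothesis~\pref{XeUnstable-odd} applied to the odd element $s\in S$ gives $s+H\subseteq S$, so $\widehat\beta(v)-\widehat\alpha(u)\in S$, as required.

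Once these cases are verified, \cref{UnstableIffPerms} concludes that $X$ is unstable. I do not anticipate any real obstacle: the statement has been engineered so that the parity decomposition works, with condition \pref{XeUnstable-even} handling the even-even edges, the identity extension handling the odd-odd edges trivially, and the compatibility of \pref{XeUnstable-odd} with \pref{XeUnstable-H} handling the mixed edges. The only thing to be a little careful about is that $H$ is a subgroup (so that $-h\in H$ whenever $h\in H$), which is given.
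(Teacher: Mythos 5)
Your proposal is correct and is essentially identical to the paper's proof: both extend $\alpha,\beta$ by the identity on the odd coset and verify the three parity cases, with the mixed case handled by combining hypothesis~\pref{XeUnstable-odd} ($s+H\subseteq S$ for odd $s\in S$) with hypothesis~\pref{XeUnstable-H} ($\alpha(u)-u,\beta(v)-v\in H$) before invoking \cref{UnstableIffPerms}. No gaps.
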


\begin{proof}
Define permutations $\alpha'$ and $\beta'$ of~$\ZZ_n$ by stipulating that 
	\[ \text{$\alpha'(v) = \beta'(v) = v$ if $v$ is odd,} \]
whereas 
	\[ \text{$\alpha'(v) = \alpha(v)$ and $\beta'(v) = \beta(v)$ if $v$ is even.} \]
It is straightforward to verify that if $uv$ is an edge of~$X$, then $\alpha'(u)$ is adjacent to $\beta'(v)$,\refnote{XeUnstable-aid} so \cref{UnstableIffPerms} tells us that $X$ is unstable.
\end{proof}

The following result is the special case where $\Cay(2\ZZ_n, S \cap 2 \ZZ_n)$ has Wilson type~\pref{Wilson-C4}.

\begin{cor} \label{XeC4}
Assume $X = \Cay(\ZZ_n, S)$ is a circulant graph with $n \equiv 0 \pmod{4}$. If there exists $m \in \ZZ_n^*$, such that $m S_e + \nh = S_e$, and $S_o + 2(m-1)\ZZ_n = S_o + \nh = S_o$, then $X$ is unstable.
\end{cor}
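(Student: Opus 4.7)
The plan is to apply \cref{XeUnstable} directly, using the Wilson type~\pref{Wilson-C4} structure on the even part $\Cay(2\ZZ_n, S_e)$ to construct the required permutations $\alpha$ and~$\beta$.

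Specifically, I define permutations of $2\ZZ_n$ by
\[ \alpha(v) = mv \quad\text{and}\quad \beta(v) = mv + \nh . \]
Since $m \in \ZZ_n^\times$, multiplication by~$m$ is a bijection of~$\ZZ_n$; it preserves~$2\ZZ_n$ because $m \cdot 2k = 2mk$, so $\alpha$ is indeed a permutation of~$2\ZZ_n$. Because $n \equiv 0 \pmod 4$, the element $\nh$ lies in~$2\ZZ_n$, so $\beta$ is also a permutation of~$2\ZZ_n$. I set
\[ H \;=\; 2(m-1)\ZZ_n + \langle \nh \rangle , \]
which is a subgroup of~$2\ZZ_n$ for the same parity reasons.

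Next I verify the four hypotheses of \cref{XeUnstable}. (1) holds because $\beta(v) - \alpha(v) = \nh \neq 0$. For~(2), given adjacent $u,v \in 2\ZZ_n$ we have $v - u \in S_e$, so the hypothesis $mS_e + \nh = S_e$ (equivalently $m S_e = S_e + \nh$, since $-\nh = \nh$) gives
\[ \beta(v) - \alpha(u) = m(v-u) + \nh \in (S_e + \nh) + \nh = S_e \subseteq S . \]
For~(3), every odd $s \in S$ lies in $S_o$, and the hypotheses $S_o + 2(m-1)\ZZ_n = S_o$ and $S_o + \nh = S_o$ together give $s + H \subseteq S_o \subseteq S$. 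For~(4), any $v \in 2\ZZ_n$ satisfies $\alpha(v) - v = (m-1)v \in 2(m-1)\ZZ_n \subseteq H$ and $\beta(v) - v = (m-1)v + \nh \in H$.

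There is no serious obstacle; the only thing to watch is that both $\nh$ and the displacements $(m-1)v$ for $v \in 2\ZZ_n$ lie inside~$H$, which is precisely why the two translation-invariance hypotheses on~$S_o$ in the statement are tailored to the generators of~$H$ produced by the Wilson type~\pref{Wilson-C4} permutations. With the four conditions verified, \cref{XeUnstable} yields that $X$ is unstable.
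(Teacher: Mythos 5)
Your proposal is correct and follows essentially the same route as the paper: the same permutations $\alpha(v) = mv$ and $\beta(v) = mv + \nh$, the same subgroup $H = \langle 2(m-1)\ZZ_n, \nh \rangle$, and the same appeal to \cref{XeUnstable}. You simply spell out the verification of hypotheses (1)--(3) in more detail than the paper does.
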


\begin{proof}
Define permutations $\alpha$ and~$\beta$ of $2 \ZZ_n$ by $\alpha(x) = mx$ and $\beta(x) = mx + \nh$. (The assumption that $n \equiv 0 \pmod{4}$ implies that $\nh$ is even, so the image of~$\beta$ is in $2 \ZZ_n$.) Let $H = \langle 2(m-1)\ZZ_n, \nh \rangle$, so $S_o + H = S_o$, by assumption. Then \cref{XeUnstable} applies, since $\alpha(v) - v = (m-1)v \in 2(m-1) \ZZ_n \subseteq H$ and $\beta(v) - v = (m-1)v + \nh \subseteq H$.
\end{proof}

\begin{rem} \label{WilsonCorrections}
The statement of \cref{Wilson} includes corrected versions \pref{Wilson-C2} and~\pref{Wilson-C3} of the original statements of Theorems~C.2 and~C.3 that appear in~\cite{Wilson}.
	
\pref{Wilson-C2} Y.-L.\,Qin, B.\,Xia, and S.\,Zhou \cite{QinXiaZhou} corrected the original hypothesis~(C.2) by adding condition~\pref{Wilson-C2-s+b}.
	(Note that complete graphs with more than two vertices are stable \cite[Eg.~2.2]{QinXiaZhou}, but satisfy condition~\pref{Wilson-C2-So} with $h = 1$, so this condition alone does not imply instability, even when $n$ is divisible by~$4$.) 
	
\pref{Wilson-C3} The original statement of hypothesis~(C.3) in~\cite[p.~376]{Wilson} includes the extraneous hypothesis that $d > 1$, and neglects to state the requirements that $n/d$ is even and that either $H \nsubseteq d \ZZ_n$ or $H \subseteq 2d \ZZ_n$. 
	(\emph{Explanation:} \cite[Thm.~1]{Wilson} does not require the Cayley graph generated by the red edges to be disconnected, so there is no need to assume $d > 1$.
	The proof of \cite[Thm.~C.3]{Wilson} uses the assumption that $r/d$ is odd to conclude that each component of $\Cay(\ZZ_n, R)$ is bipartite; this requires each $r \in R$ to be an element of even order in~$\ZZ_n$, which means that $n/{\gcd(r,n)}$ is even.
	Conditions~(1) and~(2) near the bottom of \cite[p.~360]{Wilson} translate to the requirement that either $H \nsubseteq d \ZZ_n$ or $H \subseteq 2d \ZZ_n$.)

It was mentioned above that if $n > 2$, then the complete graph $K_n$ is stable \cite[Eg.~2.2]{QinXiaZhou}. However, if $n$ is even, then $K_n$ satisfies the  conditions in the original statement of~(C.3) (with $H = \langle n/2 \rangle$, $R = \{n/2\}$, $d = n/2$, and $r/d = 1$ for the unique element~$r$ of~$R$). This shows that the additional conditions in~\pref{Wilson-C3} cannot be deleted.
\end{rem}

\section{The main lemma of \texorpdfstring{\cite{Morris-OddAbelian}}{[Morris]}} \label{mSSect}

\begin{assump} \label{AllowLoops}
For the proof of \cref{0orbit}, it will be helpful to temporarily relax our standing assumption that all graphs are simple (see \cref{GraphsAreSimple}). Namely graphs are allowed to have loops (but not multiple edges) in this \lcnamecref{mSSect}.
\end{assump}

The following elementary observation is stated only for automorphisms in~\cite{Morris-OddAbelian}, but the same proof applies to isomorphisms.

\begin{lem}[cf.\ {\cite[Lem.~2.2]{Morris-OddAbelian}}] \label{mSiso}
Let $m \in \ZZ^+$, and let $X_1 = \Cay(G_1, S_1)$ and $X_2 = \Cay(G_2, S_2)$ be Cayley graphs, such that
\noprelistbreak
	\begin{enumerate}
	\item $G_1$ and~$G_2$ are abelian, 
	and 
	\item \label{mSiso-neq}
	for $j = 1,2$, we have $ms \neq mt$ for all $s,t \in S_j$, such that $s \neq t$.
	\end{enumerate}
If $\varphi$ is any isomorphism from~$X_1$ to~$X_2$, then $\varphi$ is also an isomorphism from $\Cay(G_1, mS_1)$ to $\Cay(G_2, mS_2)$, where $m S_j = \{\, ms \mid s \in S_j \,\}$.
\end{lem}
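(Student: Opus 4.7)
The plan is to establish, for every $u, v \in G_1$, the equivalence $v - u \in mS_1 \iff \varphi(v) - \varphi(u) \in mS_2$. Since $\varphi^{-1}$ is also an isomorphism (from $X_2$ to $X_1$) satisfying the symmetric hypothesis on multiplication by $m$, only the forward implication requires a proof.

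So assume $v - u = ms$ for some $s \in S_1$, which is uniquely determined by the injectivity hypothesis. The natural first step is to consider the ``monochromatic'' walk
\[ u_0 = u,\; u_1 = u + s,\; \ldots,\; u_m = u + ms = v \]
in $X_1$, each of whose consecutive steps equals the generator $s$. Its image under $\varphi$ is a walk $w_0, w_1, \ldots, w_m$ of length $m$ in $X_2$, where $w_i := \varphi(u_i)$; each difference $t_i := w_{i+1} - w_i$ lies in $S_2$, and the goal reduces to showing $\sum_{i=0}^{m-1} t_i \in m S_2$.

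The cleanest route is to show that all $t_i$ coincide with a common value $t \in S_2$, which gives $\varphi(v) - \varphi(u) = mt \in mS_2$. To this end, I would examine the graph automorphism $\sigma := \varphi \circ \tau_s \circ \varphi^{-1}$ of $X_2$, where $\tau_s$ is the translation-by-$s$ automorphism of $X_1$; by construction $\sigma(w_i) = w_{i+1}$ for every $i$, and $\sigma^m = \varphi \circ \tau_{ms} \circ \varphi^{-1}$ sends $w_0 \mapsto w_m$.

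The main obstacle, and where the injectivity hypothesis must enter essentially, is to conclude that $\sigma^m$ acts on all of $G_2$ as a translation by a single element of $mS_2$. Because $\sigma$ itself need not be a translation of $G_2$ — abelian Cayley graphs frequently admit isomorphisms outside the regular translation group — the uniformity $t_0 = \cdots = t_{m-1}$ is not automatic. I would complete the argument by comparing walks of length $m$ between $\varphi(u)$ and $\varphi(v)$ in $X_2$ against their pullbacks via $\varphi^{-1}$ in $X_1$, using the injectivity hypothesis on both sides to isolate the monochromatic walks: in $X_1$ there is exactly one monochromatic walk from $u$ to $v$ (by uniqueness of $s$), and matching this to a monochromatic walk in $X_2$ then produces the required $t \in S_2$ with $mt = \varphi(v) - \varphi(u)$. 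This counting-cum-uniqueness step is the technical heart of the proof.
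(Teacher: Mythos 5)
Your reduction to the forward implication via $\varphi^{-1}$ is fine, and your instinct to count length-$m$ walks and exploit the uniqueness of the monochromatic one is exactly the right circle of ideas. But the step you yourself flag as ``the technical heart'' is precisely where the argument is missing, and the most natural way to fill it in does not work for composite~$m$. The natural filling-in is: the number of length-$m$ walks from $u$ to $v$ in $X_1$ equals the number from $\varphi(u)$ to $\varphi(v)$ in $X_2$, walks correspond to $m$-tuples $(s_1,\dots,s_m)$ of generators summing to the difference of endpoints, cyclic rotation acts on these tuples, and one hopes the total count modulo something detects whether a constant (monochromatic) tuple exists. This works when $m$ is prime, because every non-constant tuple then has a rotation orbit of size exactly $m$. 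For composite $m$ it fails: a non-constant tuple of period $d$ with $1<d<m$ has orbit size $m/d$, a proper divisor of $m$, so the total count is not congruent to the number of constant tuples modulo any useful modulus (e.g.\ for $m=6$ a period-$2$ tuple contributes an orbit of size $3$, which is odd, so even the count mod $2$ is contaminated). Your alternative hope --- that the image of the monochromatic walk is itself monochromatic, i.e.\ $t_0=\dots=t_{m-1}$ --- is aiming at something false in general (the isomorphism $\sigma=\varphi\circ\tau_s\circ\varphi^{-1}$ need not be a translation, as you note), and what is actually needed is only the existence of \emph{some} monochromatic walk between the image endpoints.

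The paper closes this gap by factoring $m=p_1p_2\cdots p_r$ into primes and inducting: setting $m_i=p_1\cdots p_i$, it shows $\varphi$ is an isomorphism from $\Cay(G_1,m_iS_1)$ to $\Cay(G_2,m_iS_2)$ for each $i$, the inductive step being the rotation--counting argument applied to walks of \emph{prime} length $p_i$ in $\Cay(G_j,m_{i-1}S_j)$, where orbits have size $1$ or $p_i$ and the hypothesis~\pref{mSiso-neq} guarantees the constant tuple (if any) is unique. If you replace your single length-$m$ count by this prime-by-prime induction, your outline becomes the paper's proof; as written, the argument has a genuine gap for composite~$m$.
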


\begin{proof}[Proof \normalfont (cf.\ proof of {\cite[Lem.~2.2]{Morris-OddAbelian}})]
Write $m = p_1 p_2 \cdots p_r$, where each $p_i$ is prime, and let $m_i = p_1 p_2 \cdots p_i$ for $0 \le i \le r$. We will prove by induction on~$i$ that $\varphi$ is an isomorphism from $\Cay(G_1, m_iS_1)$ to $\Cay(G_2, m_iS_2)$
The base case is true by assumption, since $m_0 S_j  = 1 S_j = S_j$.

For $v,w \in G_j$, let $\#(v,w)$ be the number of walks of length~$p_i$ from~$v$ to~$w$ in $\Cay(G_j, m_{i-1} S_j)$. These walks are in one-to-one correspondence with the $p_i$-tuples $(s_1,s_2,\ldots,s_{p_i})$ of elements of~$m_{i-1} S_j$, such that 
	\[ s_1 + s_2 + \cdots + s_{p_i} = w - v . \]
Since $G_j$ is abelian, any cyclic rotation of $(s_1,s_2,\ldots,s_{p_i})$ also corresponds to a walk from~$v$ to~$w$. Therefore, the set of these walks can be partitioned into sets of cardinality~$p_i$, unless $w = p_i s + v$, for some $s \in m_{i-1} S_j$, in which case there is a walk of the form 
	$ v, s + v, 2s + v, \ldots, p_i s + v = w$.
(Also note that $s$ is unique, if it exists, by assumption~\pref{mSiso-neq}.) Hence, we see that 
	\[ \#(v,w) \not\equiv 0 \pmod{p_i} \quad \iff \quad \text{$v$ is adjacent to~$w$ in $\Cay( G_j, p_i m_{i-1} S_j)$} . \]
Since $p_i m_{i-1} = m_i$, the desired conclusion  that $\varphi$ is an isomorphism from $\Cay(G_1, m_i S_1)$ to $\Cay(G_2, m_i S_2)$ now follows from the induction hypothesis that $\varphi$ is an isomorphism from $\Cay(G_1, m_{i-1} S_1)$ to $\Cay(G_2, m_{i-1}S_2)$ (and the observation that isomorphisms preserve the value of the function~$\#$).
\end{proof}

\begin{cor} \label{2S'-BX}
Let $X = \Cay(\ZZ_n, S)$ be a circulant graph of even order, let $\varphi$ be an automorphism of $BX$, and let
    \[ S' = \{\, s' \in S \mid s' + (n/2) \notin S \,\} . \]
Then $\varphi$ is an automorphism of\/ $\Cay(\ZZ_n \times \ZZ_2, 2 S' \times \{0\})$.
\end{cor}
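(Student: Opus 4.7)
The plan is to adapt the walk-counting argument in the proof of \cref{mSiso} to this situation. We cannot apply \cref{mSiso} with $m=2$ directly, because its hypothesis requires $2\tilde s \neq 2\tilde t$ for all distinct $\tilde s,\tilde t \in \widetilde{S}$, whereas in $\ZZ_n \times \ZZ_2$ we have $2(s,1) = (2s, 0) = 2(s + n/2, 1)$, so this fails precisely when both $s$ and $s + n/2$ lie in~$S$. Instead, I would imitate the base step of the induction in \cref{mSiso} directly, using the definition of~$S'$ to handle the parity bookkeeping.

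For $u,w \in \ZZ_n \times \ZZ_2$, let $N(u,w)$ denote the number of walks of length~$2$ from~$u$ to~$w$ in $BX = \Cay(\ZZ_n \times \ZZ_2, \widetilde S)$. These walks are in bijection with ordered pairs $(\tilde s, \tilde t) \in \widetilde S \times \widetilde S$ with $\tilde s + \tilde t = w - u$. Because the group is abelian, the involution $(\tilde s,\tilde t) \mapsto (\tilde t,\tilde s)$ partitions the off-diagonal pairs into orbits of size~$2$, so
\[ N(u,w) \equiv \bigl|\{\, \tilde s \in \widetilde S \mid 2\tilde s = w-u \,\}\bigr| \pmod{2} . \]

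The main step is to identify this parity. Writing $w - u = (T_1, T_2)$ and $\tilde s = (s,1)$, the equation $2\tilde s = w-u$ forces $T_2 = 0$ and $2s = T_1$ in~$\ZZ_n$. Since $n$ is even, the latter has either no solutions (if $T_1 \notin 2\ZZ_n$) or exactly two, namely $s_0$ and $s_0 + n/2$ for some~$s_0$. Therefore the count above is
\[ \bigl|\{s_0, s_0 + n/2\} \cap S \bigr| , \]
which is odd if and only if exactly one of $s_0, s_0+n/2$ lies in~$S$; by definition of~$S'$, this is equivalent to $T_1 \in 2S'$ (note that when both elements lie in~$S$ or both lie outside~$S$, the count is even, while when exactly one lies in~$S$ that element is an element of~$S'$). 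In summary,
\[ N(u,w) \equiv 1 \pmod 2 \ \iff \ w-u \in 2S' \times \{0\} , \]
which is precisely the adjacency condition (including the loop at~$u$, allowed by \cref{AllowLoops}) in $\Cay(\ZZ_n \times \ZZ_2, 2S' \times \{0\})$.

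Since $\varphi \in \Aut BX$, it preserves the function $N$, and hence preserves its parity, so $\varphi$ preserves adjacency in $\Cay(\ZZ_n \times \ZZ_2, 2S' \times \{0\})$. The main obstacle is really just the bookkeeping of the parity count in the presence of the ``non-generic'' pairs $\{s, s+n/2\} \subseteq S$; once the count is correctly identified with $2S'$, the conclusion is immediate.
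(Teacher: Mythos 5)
Your proof is correct and follows essentially the same route as the paper: the paper also observes that \cref{mSiso} cannot be invoked directly (its injectivity hypothesis fails when both $s$ and $s+n/2$ lie in $S$) and instead reruns its length-$2$ walk-counting argument with $m=2$, using the swap involution to reduce the count modulo $2$ to the fixed pairs $(\tilde s,\tilde s)$ with $2\tilde s = w-u$, whose number is odd exactly when $w-u \in 2S' \times \{0\}$. Your parity bookkeeping with the two square roots $s_0$ and $s_0+n/2$ matches the paper's argument precisely.
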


\begin{proof}
For every $s \in S$, we have
	\[ \{\, (t,1) \in G \times \{1\} \mid 2(t,1) = 2(s,1) \,\} = \{(s,1), (s + \nh, 1) \} , \]
since $\nh$ is the unique element of order~$2$ in~$\ZZ_n$. Hence, the set
	\[ \{\, (t,1) \in S \times \{1\} \mid 2(t,1) = 2(s,1) \,\} \]
has cardinality~$1$ (which is odd) if $s + \nh \notin S$, and has cardinality~$2$ (which is even) otherwise. Therefore, the desired conclusion is obtained by applying the proof of \cref{mSiso}\refnote{2S'-BX-aid} with $G_1 = G_2 = \ZZ_n \times \ZZ_2$ and $m = 2$, since 
	\[ 2(S' \times \{1\}) = 2S' \times \{0\} . \qedhere \]
\end{proof}

\begin{cor}[{\cite[Rem.~3.1]{Morris-OddAbelian}, \cite[Thm.~23.9(a), p.~58]{Wielandt}}] \label{mSrelprime}
Let $\varphi$ be an automorphism of a Cayley graph\/ $\Cay(G, S)$, and let $m \in \ZZ^+$. If $G$ is abelian and $\gcd \bigl( m, |G| \bigr) = 1$, then $\varphi$ is an automorphism of\/ $\Cay(G, m S)$.
\end{cor}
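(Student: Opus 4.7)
The plan is to deduce this as an immediate specialization of \cref{mSiso}. I would take $X_1 = X_2 = \Cay(G, S)$, $G_1 = G_2 = G$, and $S_1 = S_2 = S$, so an automorphism $\varphi$ of $\Cay(G, S)$ is precisely an isomorphism from $X_1$ to $X_2$ in the sense of \cref{mSiso}. Hypothesis~(1) of that lemma is satisfied because $G$ is abelian by assumption, so the only thing to verify is hypothesis~\pref{mSiso-neq}.

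The key step is to check that $ms \neq mt$ for all distinct $s, t \in S$. I would in fact establish the stronger statement that the endomorphism $x \mapsto mx$ of the finite abelian group $G$ has trivial kernel, and is therefore a bijection. Indeed, if $mx = 0$ in~$G$, then the order of~$x$ divides both $m$ and $|G|$, hence divides $\gcd\bigl(m, |G|\bigr) = 1$, forcing $x = 0$. So multiplication by~$m$ is injective on all of~$G$, and in particular restricts to an injection on~$S$, which is exactly what \pref{mSiso-neq} demands.

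With both hypotheses verified, \cref{mSiso} directly yields that $\varphi$ is an isomorphism from $\Cay(G, mS)$ to $\Cay(G, mS)$, that is, an automorphism of $\Cay(G, mS)$, as required. There is no real obstacle here: the substantive content of the argument is packaged inside \cref{mSiso}, and the corollary is essentially the observation that the coprimality condition $\gcd\bigl(m,|G|\bigr) = 1$ makes the ``multiplication by $m$ does not collapse distinct elements'' hypothesis automatic.
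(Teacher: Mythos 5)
Your proposal is correct and follows exactly the paper's own argument: apply \cref{mSiso} with $G_1 = G_2 = G$ and $S_1 = S_2 = S$, and verify hypothesis~\pref{mSiso-neq} by noting that $\gcd\bigl(m,|G|\bigr) = 1$ makes multiplication by~$m$ a bijection on~$G$. You merely spell out the injectivity argument (order of $x$ divides $\gcd\bigl(m,|G|\bigr)$) that the paper leaves as an observation.
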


\begin{proof}
Apply \cref{mSiso} with $G_1 = G_2 = G$ and $S_1 = S_2 = S$.
To verify hypothesis~\pref{mSiso-neq} of the \lcnamecref{mSiso}, note that the map $x \mapsto mx$ is a bijection on~$G$, since $\gcd \bigl( m, |G| \bigr) = 1$.
\end{proof}

\begin{cor} \label{congruentorbit}
Let $\alpha$ be an automorphism of~$BX$, where $X = \Cay(\ZZ_n , S)$ is a circulant graph, let $s,t \in S$, and let $k,\ell \in \ZZ^+$. Assume
	\begin{enumerate}
	\item $\alpha$ maps some $s$-edge to a $t$-edge, 
	\item $ks \in S$,
	\item \label{congruentorbit-congruent}
	$k \equiv \ell \pmod{\gcd\bigl( |s|, |t| \bigr)}$,
	and
	\item \label{congruentorbit-relprime}
	$\gcd \bigl( k, |s| \bigr) = \gcd \bigl( \ell, |t| \bigr) = 1$.
	\end{enumerate}
Then $\ell t \in S$.
\end{cor}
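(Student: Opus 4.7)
My plan is to reduce the claim to an application of \cref{mSrelprime} by building, via the Chinese Remainder Theorem, a single multiplier $m$ that simultaneously ``encodes'' both $k$ (modulo~$|s|$) and $\ell$ (modulo~$|t|$). Once $\alpha$ is recognized as an automorphism of $\Cay(\ZZ_n \times \ZZ_2, m\widetilde{S})$, I would compare edges lying in both that graph and in $BX$ to extract $\ell t \in S$.

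Concretely, I would first choose $m \in \ZZ^+$ with $m \equiv k \pmod{|s|}$, $m \equiv \ell \pmod{|t|}$, and $\gcd(m,2n)=1$. The first two congruences are compatible by hypothesis~(3). For the coprimality to~$2n$, hypothesis~(4) already guarantees that $m$ avoids every prime divisor of~$|s|$ or~$|t|$ (those residues are forced nonzero), while for the remaining prime factors of~$2n$ one adjusts $m$ by a suitable multiple of $\lcm(|s|,|t|)$ via a Dirichlet-style argument; in particular $m$ will be odd. \Cref{mSrelprime} then yields $\alpha \in \Aut \Cay(\ZZ_n \times \ZZ_2, m\widetilde{S})$.

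By hypothesis~(1), fix $u_0$ and $\epsilon \in \{\pm 1\}$ with $\alpha(u_0 + \tilde{s}) = \alpha(u_0) + \epsilon\tilde{t}$; translating so that $\alpha(u_0) = 0$ and, if necessary, replacing $\alpha$ by the map $x \mapsto -\alpha(x)$ (which still lies in $\Aut BX$ because $\widetilde{S} = -\widetilde{S}$), we may assume $\alpha(\tilde{s}) = \tilde{t}$. Since $m$ is odd and $m \equiv k \pmod{|s|}$, we compute $m\tilde{s} = (ms,1) = (ks,1) = \widetilde{ks}$, and analogously $m\tilde{t} = \widetilde{\ell t}$. Because $ks \in S$, the edge $\{0, m\tilde{s}\}$ lies in both $BX$ and $\Cay(\ZZ_n \times \ZZ_2, m\widetilde{S})$, so its $\alpha$-image is an edge of both, forcing $\alpha(m\tilde{s}) \in \widetilde{S} \cap m\widetilde{S}$. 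The goal is then to identify $\alpha(m\tilde{s})$ with $m\tilde{t}$, which would give $\widetilde{\ell t} = m\tilde{t} \in \widetilde{S}$, hence $\ell t \in S$.

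The hard part is this final identification. Since $\alpha$ is not in general a group homomorphism, $\alpha(m\tilde{s}) = m\tilde{t}$ does not follow automatically from $\alpha(\tilde{s}) = \tilde{t}$, and the membership condition in $\widetilde{S} \cap m\widetilde{S}$ alone leaves room for other possibilities. I expect the argument to require a walk-counting step along the lines of the proof of \cref{mSiso}: one considers walks of length~$m$ from $0$ to $m\tilde{s}$ in $BX$, uses the cyclic-rotation trick together with $\gcd(m,2n)=1$ to identify the unique constant walk $(\tilde{s}, \ldots, \tilde{s})$ on the source side and the unique constant walk $(\tilde{r}, \ldots, \tilde{r})$ on the image side (where $\alpha(m\tilde{s}) = m\tilde{r}$), and then uses the known first step $\alpha(\tilde{s}) = \tilde{t}$ together with $\alpha$-invariance of the appropriate walk counts to force $r = t$. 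Getting the combinatorics right --- in particular handling parity in the second coordinate of $\ZZ_n \times \ZZ_2$ and verifying that the specific information $\alpha(\tilde{s}) = \tilde{t}$ propagates through all $m$ iterated steps --- is the delicate aspect of the argument.
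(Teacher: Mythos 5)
There is a genuine gap at exactly the point you flag as ``the hard part,'' and it is not a technicality: identifying $\alpha(m\tilde{s})$ with $m\tilde{t}$ is the whole content of the statement, and nothing in your setup delivers it. Knowing $\alpha(\tilde{s}) = \tilde{t}$ and $\alpha \in \Aut \Cay(\ZZ_n \times \ZZ_2, m\widetilde{S})$ only tells you that the edge $\{0, m\tilde{s}\}$ (which lies in both $BX$ and the dilated graph) maps to an edge of both graphs, i.e.\ that $\alpha(m\tilde{s}) = m\tilde{r}$ for \emph{some} $r \in S$ with $mr \in S$; there is no mechanism forcing $r = t$, since $\alpha$ need not be a homomorphism and the given edge $\{0,\tilde{s}\}$ is a different edge from $\{0, m\tilde{s}\}$. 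The walk-counting sketch you offer as a repair is not developed, and it is unclear it can be made to work: the rotation trick in the proof of \cref{mSiso} identifies the \emph{parity} of the number of closed-up walks, not the image of a specific vertex, so ``propagating $\alpha(\tilde{s}) = \tilde{t}$ through $m$ iterated steps'' is precisely the kind of pointwise control that these counting arguments do not give.

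The paper's proof sidesteps this entirely by an inversion trick. Instead of dilating by your $m$, it uses Dirichlet's theorem to pick a prime $p > 2n$ with $pm \equiv 1 \pmod{\lcm(|s|,|t|)}$, so that $pks = s$ and $p\ell t = t$. Then \cref{mSrelprime} makes $\alpha$ an automorphism of $\Cay\bigl(\ZZ_n \times \ZZ_2, p(S \times \{1\})\bigr)$, and --- this is where the hypothesis $ks \in S$ enters --- the \emph{original} $s$-edge already lies in this dilated graph, because $(s,1) = p(ks,1) \in p(S \times \{1\})$. Hence its image, the given $t$-edge, must also be an edge of the dilated graph, so $(t,1) \in p(S \times \{1\})$; combining $t = p\ell t$ with injectivity of multiplication by $p$ yields $\ell t \in S$. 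No information about the image of any vertex beyond the two endpoints of the one given edge is ever needed. If you replace your forward dilation by $m$ with this dilation by the inverse $p$, your argument closes up and becomes the paper's.
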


\begin{proof}
By assumption~\pref{congruentorbit-congruent}, there exists $m \in \ZZ^+$, such that $m \equiv k \pmod{s}$ and $m \equiv \ell \pmod{|t|}$. Then assumption~\pref{congruentorbit-relprime} implies that $m$ is relatively prime to $|s|$ and~$|t|$, so, by Dirichlet's Theorem on primes in arithmetic progressions, we may choose a prime $p > 2n$, such that $pm \equiv 1 \pmod{\lcm\bigl( |s|, |t| \bigr)}$. This implies $pks = s$ and $p\ell t = t$. 

Since $p$ is relatively prime to $|\ZZ_n \times \ZZ_2|$, we know from \cref{mSrelprime} that every automorphism of $BX$ is an automorphism of $\Cay \bigl( \ZZ_n \times \ZZ_2 , p \bigl( S \times \{1\} \bigr) \bigr)$. Since $(s,1) = p(ks,1) \in p \bigl( S \times \{1\} \bigr)$, and $\alpha$ maps some $s$-edge to a $t$-edge, this implies that $t \in pS$. Since $t = p \ell t$, and multiplication by~$p$ is a bijection on $\ZZ_n \times \ZZ_2$, this implies $\ell t \in S$.\refnote{congruentorbit-aid}
\end{proof}

Here are two interesting special cases:

\begin{cor} \label{order2orbit}
Let $\alpha$ be an automorphism of~$BX$, where $X = \Cay(\ZZ_n , S)$ is a circulant graph, and let $s,t \in S$. If $\alpha$ maps some $s$-edge to a $t$-edge, and either $\gcd(|s|, |t| \bigr) = 1$, or $S$ contains every element that generates $\langle s \rangle$ \textup(e.g., if $|s| \in \{1,2,3,4,6\}$\textup), then $S$ contains every element that generates~$\langle t \rangle$.
\end{cor}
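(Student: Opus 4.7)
The plan is to apply \cref{congruentorbit} once for each generator of $\langle t \rangle$. Let $t' \in \ZZ_n$ be such a generator and write $t' = \ell t$ with $\gcd(\ell, |t|) = 1$; it then suffices to exhibit $k \in \ZZ^+$ satisfying $ks \in S$, $\gcd(k, |s|) = 1$, and $k \equiv \ell \pmod{\gcd(|s|,|t|)}$, since \cref{congruentorbit} will immediately give $\ell t = t' \in S$.

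In the first case, where $\gcd(|s|, |t|) = 1$, I would simply take $k = 1$: the congruence condition is vacuous (it is modulo~$1$), $\gcd(1, |s|) = 1$, and $ks = s \in S$ by hypothesis. In the second case, where $S$ contains every generator of $\langle s \rangle$, set $d = \gcd(|s|, |t|)$. Since $\gcd(\ell, |t|) = 1$ and $d$ divides $|t|$, we have $\gcd(\ell, d) = 1$. A standard Chinese Remainder Theorem argument then produces $k$ with $k \equiv \ell \pmod{d}$ and $\gcd(k, |s|) = 1$: for each prime $p$ that divides $|s|$ but not $d$, the arithmetic progression $\ell, \ell + d, \ell + 2d, \ldots$ surjects onto $\ZZ/p\ZZ$, so one can shift $\ell$ by an appropriate multiple of $d$ to avoid divisibility by each such $p$, and the existing coprimality of $\ell$ to the primes dividing $d$ is preserved. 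Any such $k$ makes $ks$ a generator of $\langle s \rangle$, so $ks \in S$ by hypothesis, and \cref{congruentorbit} applies to yield $\ell t \in S$.

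The parenthetical claim that $|s| \in \{1,2,3,4,6\}$ suffices for the second alternative follows because $\langle s \rangle$ then has $\varphi(|s|) \le 2$ generators, namely $\pm s$, both of which belong to $S$ by the defining symmetry of connection sets. The only step with any substance is the CRT selection of $k$, and since the only potential obstruction $\gcd(\ell, d) = 1$ is already automatic, I do not anticipate any real difficulty.
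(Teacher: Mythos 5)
Your proposal is correct and follows essentially the same route as the paper: reduce to \cref{congruentorbit}, take $k=1$ when $\gcd(|s|,|t|)=1$, and otherwise choose $k\equiv\ell\pmod{\gcd(|s|,|t|)}$ with $\gcd(k,|s|)=1$ (the paper suggests taking $k$ to be a large prime where you invoke CRT, but these are interchangeable). No issues.
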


\begin{proof}
Let $\ell t$ be a generator of~$\langle t \rangle$, so $\gcd \bigl( \ell, |t| \bigr) = 1$.
It suffices to find $k \in \ZZ^+$, such that 
	\[ \text{$ks \in S$, 
	\ $k \equiv \ell \pmod{\gcd\bigl( |s|, |t| \bigr)}$, 
	\ and \ 
	$\gcd \bigl( k, |s| \bigr) = 1$,} \]
for then \cref{congruentorbit} tells us that $\ell t \in S$.

If $\gcd(|s|, |t| \bigr) = 1$, we may let $k = 1$. 

Since $\gcd \bigl( \ell, |t| \bigr) = 1$, we know that $\ell$ is relatively prime to $\gcd \bigl( |s|, |t| \bigr)$, so there is some $k \in \ZZ^+$, such that 
	\[ \text{$k \equiv \ell \pmod{\gcd\bigl( |s|, |t| \bigr)}$
	\ and \ 
	$\gcd \bigl( k, |s| \bigr) = 1$}. \]
(For example, we could take $k$ to be a large prime.) If $S$ contains every element that generates $\langle s \rangle$, then $ks \in S$.
\end{proof}

Recall that the following \lcnamecref{0orbit}'s assumption that $X$ is loopless will automatically be satisfied in all of the following \lcnamecref{mSSect}s of the paper (see \cref{GraphsAreSimple}).

\begin{cor} \label{0orbit}
Let $\alpha$ be an automorphism of~$BX$, where $X = \Cay(\ZZ_n , S)$ is a connected, nonbipartite, loopless, circulant graph, and let $t \in \ZZ_n$. If $\alpha(0,1) = (t,1)$, then $S$ does not contain any generator of the subgroup $\langle t \rangle$.
\end{cor}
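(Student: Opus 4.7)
The plan is a proof by contradiction combining Dirichlet's theorem on primes in arithmetic progressions with \cref{mSrelprime}. Suppose some generator $\ell t$ of $\langle t \rangle$ lies in~$S$; equivalently, $\gcd(\ell, |t|) = 1$ and $\ell t \in S$. The strategy is to replace the connection set~$S$ by a Dirichlet-conjugate~$pS$ so as to compare the edges incident to~$(0, 1)$ and to $\alpha(0, 1) = (t, 1)$ across two different double covers.

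First, using Dirichlet's theorem, choose a prime~$p$ with $p \equiv \ell \pmod{|t|}$ and $\gcd(p, 2n) = 1$; such a prime exists because $\gcd(\ell, |t|) = 1$ and $|t|$ divides~$n$. The congruence $p \equiv \ell \pmod{|t|}$, combined with the fact that $|t|$ is the order of~$t$ in~$\ZZ_n$, forces $pt = \ell t$ in~$\ZZ_n$, so $pt \in S$. Applying \cref{mSrelprime} with $m = p$ then shows that $\alpha$ is simultaneously an automorphism of $\Cay(\ZZ_n \times \ZZ_2, p \widetilde S)$; since $p$~is odd, this Cayley graph is the canonical bipartite double cover $BX_p$ of $X_p \coloneqq \Cay(\ZZ_n, pS)$.

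Having placed $\alpha$ on both $BX$ and~$BX_p$, the next step is to compare the neighborhoods of~$(0, 1)$ and of~$(t, 1)$. The edge $(0, 1)$-$(\ell t, 0) = (0, 1)$-$(pt, 0)$ is present in~$BX$ because $\ell t \in S$, and under~$\alpha$ it is carried to an edge $(t, 1)$-$(t + r, 0)$ with $r \in S$. Applying \cref{congruentorbit} to this transferred edge, while also exploiting the fact that $\alpha$ is an automorphism of~$BX_p$ whose connection set is~$p \widetilde S$, I expect to deduce that~$r$ must itself be a generator of~$\langle t \rangle$ (or of a closely related subgroup). Iterating the Dirichlet-conjugation argument, with the prime~$p$ varying over primes congruent to~$\ell \pmod{|t|}$, should then trap~$r$ in successively smaller subsets of~$\ZZ_n$, eventually forcing $0 \in S$ and contradicting the loopless hypothesis.

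I expect the main obstacle to be the bookkeeping across the two covers: $\alpha$ preserves both $BX$ (with connection set~$\widetilde S$) and $BX_p$ (with connection set~$p\widetilde S$), but a given edge of~$BX$ need not be an edge of~$BX_p$, and isolating the one configuration that forces $0 \in S$ requires tracking the multiplicative shifts carefully. The loopless hypothesis plays the essential role, since it is precisely the condition $0 \notin S$ (equivalently $0 \notin pS$) that rules out the adversarial configuration produced by the iteration.
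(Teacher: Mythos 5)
Your set-up (Dirichlet's theorem plus \cref{mSrelprime}) invokes the right machinery---it is exactly what powers \cref{congruentorbit} and \cref{order2orbit}---but the proposal never actually closes. Everything after ``Applying \cref{congruentorbit} to this transferred edge'' is conditional (``I expect to deduce'', ``should then trap''), and the iteration you describe cannot terminate in the contradiction $0 \in S$. The multiplicative transport in \cref{congruentorbit} only ever produces elements of the form $\ell' t'$ with $\gcd(\ell',|t'|)=1$, i.e.\ generators of cyclic subgroups $\langle t'\rangle$ for various $t'\in S$; such an element is never $0$ unless $t'=0$, which is precisely what you are trying to rule out. Likewise, there is no reason the image of your $\ell t$-edge should be an $r$-edge with $r$ a generator of $\langle t\rangle$: all you know is $r\in S$, and nothing ties $r$ back to $t$ or to $0$. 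In short, running the machinery on $S$ itself can only move you around inside $S$; it cannot manufacture the forbidden element~$0$, so the loopless hypothesis is never actually engaged.

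The missing idea is to run the argument on the \emph{complement}. Since $BX$ is connected, every automorphism of $BX$ is also an automorphism of $BX^c$, where $X^c=\Cay(\ZZ_n,S^c)$ and $S^c=\ZZ_n\setminus S$. Looplessness now enters \emph{positively}: $0\in S^c$, so $\{(0,0),(0,1)\}$ is an honest $0$-edge of $BX^c$, and (after the normalization $\alpha(0,0)=(0,0)$ under which the statement is applied) $\alpha$ carries it to a $t$-edge of $BX^c$. Applying \cref{order2orbit} to $BX^c$ with $s=0$ (so $|s|=1$ and the hypothesis on generators of $\langle s\rangle$ is vacuous) yields that $S^c$ contains every generator of $\langle t\rangle$---which is exactly the assertion that $S$ contains none. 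This is the paper's one-paragraph proof; your Dirichlet and prime-conjugation computations are already packaged inside \cref{congruentorbit} and \cref{order2orbit}, so once you pass to the complement no further work is needed.
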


\begin{proof}
Let $S^c = G \setminus S$ be the complement of~$S$, and let $X^c = \Cay(\ZZ_n, S^c)$. Since $BX$ is connected, it is easy to see that every automorphism of~$BX$ is also an automorphism of~$BX^c$.\refnote{0orbit-aid}

Note that $0 \in S^c$, since $X$ is assumed to be loopless. Therefore, we may let $s = 0$ (so $|s| = 1$) to conclude from \cref{order2orbit} that $S^c$ contains every generator of $\langle t \rangle$.
\end{proof}

The following result is stated only for automorphisms in~\cite{Morris-OddAbelian}, but essentially the same proof applies to isomorphisms.

\begin{cor}[cf.\ {\cite[Thm.~1.1]{Morris-OddAbelian}}] \label{OddStableIso}
Assume $X_1 = \Cay(\ZZ_n, S_1)$ and $X_2 = \Cay(\ZZ_n, S_2)$ are twin-free, connected, circulant graphs of odd order. If $\varphi$ is any isomorphism from $BX_1$ to $BX_2$, such that 
	\[ \varphi(\ZZ_n \times \{0\}) = \ZZ_n \times \{0\} , \]
then there is an isomorphism $\alpha \colon X_1 \to X_2$, such that $\varphi(x,i) = \bigl( \alpha(x), i \bigr)$ for all $(x,i) \in BX_1$.
\end{cor}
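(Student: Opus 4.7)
The plan is to adapt the doubling trick from the proof of \cite[Thm.~1.1]{Morris-OddAbelian}. Since $n$ is odd, multiplication by~$2$ is a bijection on~$\ZZ_n$, so distinct elements of $S_j \times \{1\}$ remain distinct when doubled in $\ZZ_n \times \ZZ_2$. Thus hypothesis~\pref{mSiso-neq} of \cref{mSiso} is satisfied with $m = 2$, and we conclude that $\varphi$ is also an isomorphism from $\Cay(\ZZ_n \times \ZZ_2, 2S_1 \times \{0\})$ to $\Cay(\ZZ_n \times \ZZ_2, 2S_2 \times \{0\})$.

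The key point is that this new connection set has second coordinate zero, so the graph decomposes as two copies of $\Cay(\ZZ_n, 2S_j)$, one on each bipartition class. Since $\varphi$ preserves the bipartition by hypothesis, there exist uniquely determined maps $\alpha, \beta \colon \ZZ_n \to \ZZ_n$ with $\varphi(v, 0) = (\alpha(v), 0)$ and $\varphi(v, 1) = (\beta(v), 1)$, and each of $\alpha$ and~$\beta$ is an isomorphism from $\Cay(\ZZ_n, 2S_1)$ to $\Cay(\ZZ_n, 2S_2)$. Choosing a positive integer $m$ with $2m \equiv 1 \pmod{n}$ (which exists because $n$ is odd) and applying \cref{mSiso} once more (now inside $\ZZ_n$) upgrades $\alpha$ and~$\beta$ to isomorphisms from $X_1$ to~$X_2$.

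It remains to show $\alpha = \beta$; taking this common map to be the $\alpha$ of the conclusion will then finish the proof. Since $\varphi$ is an isomorphism of the bipartite double covers, the equivalence $w - v \in S_1 \iff \beta(w) - \alpha(v) \in S_2$ holds for all $v, w \in \ZZ_n$, and the fact that $\alpha$ is an isomorphism from $X_1$ to~$X_2$ gives the parallel equivalence with $\alpha(w)$ in place of $\beta(w)$. Writing $\gamma = \beta \circ \alpha^{-1}$ and combining, one obtains $y - x \in S_2 \iff \gamma(y) - x \in S_2$ for all $x, y \in \ZZ_n$. Specializing $y = z$, $x = z - s$ with $s \in S_2$ yields $(\gamma(z) - z) + S_2 \subseteq S_2$, hence equality by finiteness; since $X_2$ is twin-free, \cref{TwinFreeIffNoTranslation} forces $\gamma(z) = z$ for every $z$. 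The main obstacle is the first two paragraphs: only by passing to the doubled Cayley graph (which requires $n$ odd, so that~$2$ is invertible) can one separate the two bipartition classes of $BX_j$ and extract $\alpha$ and $\beta$ as individual isomorphisms of the $X_j$; the twin-free hypothesis then handles the identification $\alpha = \beta$ routinely.
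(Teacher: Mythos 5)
Your proof is correct and follows essentially the same route as the paper's: both collapse the $\ZZ_2$-coordinate of the connection set via \cref{mSiso} and then use twin-freeness of $X_2$ to pin down the action on the second bipartition class. The only cosmetic difference is that the paper multiplies by $m = n+1$ (which is even and $\equiv 1 \pmod{n}$) in a single step, where you use $m = 2$ followed by the inverse of~$2$ modulo~$n$, and the paper identifies $\varphi(x,1)$ by comparing neighborhoods rather than by your (equivalent) translation argument.
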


\begin{proof}
This follows quite easily from \cref{mSiso} (with $m = n + 1$), by the argument in the proof of \cite[Thm.~1.1]{Morris-OddAbelian}.\refnote{OddStableIso-aid-mSiso}
\end{proof}

\section{Unstable circulant graphs of order \texorpdfstring{$2p$}{2p}} \label{2pSect}

\begin{thm} \label{2p}
If $p$ is a prime number, then every nontrivially unstable circulant graph of order~$2p$ has Wilson type~\pref{Wilson-C4}.
\end{thm}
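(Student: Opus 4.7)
My plan is to use the stability of odd-order circulants (\cref{OddCirculant}) together with the Cayley Isomorphism Property (\cref{Muzychuk-CI}) to pin down the instability automorphism of $BX$ tightly enough that it yields an isomorphism $X\cong\Cay(\ZZ_{2p},S+p)$; then CI on $\ZZ_{2p}$ supplies $m\in\ZZ_{2p}^\times$ with $p+mS=S$, which is exactly Wilson type~\pref{Wilson-C4}. The case $p=2$ is handled by inspection: there are no nontrivially unstable circulant graphs on $4$ vertices. So assume $p$ is odd.

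By \cref{UnstableIffPerms}, together with composition by translations and the bipartition swap (all of which lie in $\Aut X\times S_2$), produce $\varphi\in\Aut(BX)\setminus(\Aut X\times S_2)$ that preserves the bipartition of $BX$, so $\varphi(v,0)=(\alpha(v),0)$ and $\varphi(v,1)=(\beta(v),1)$ with $\alpha\neq\beta$. Set $H_e=2\ZZ_{2p}\times\ZZ_2$; the induced subgraph $BX[H_e]$ equals $BY_e$, where $Y_e=\Cay(2\ZZ_{2p},S_e)\cong\Cay(\ZZ_p,S_e/2)$ is a connected, twin-free, nonbipartite circulant on odd order~$p$ (using that $X$ is connected and nonbipartite together with the fact that $\ZZ_p$ is simple), hence stable by \cref{OddCirculant}, so $\Aut(BY_e)=\Aut(Y_e)\times S_2$. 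A case analysis on the parity behaviour of $\alpha$ and $\beta$ then rules out the possibilities that they both preserve, or both swap, parity: in the first case, stability of $Y_e$ combined with bipartition-preservation forces $\alpha=\beta$ on each half, contradicting $\alpha\neq\beta$; the second case reduces to the first after composing with $T_1\times\mathrm{id}$. Conjugating by the bipartition swap if necessary, we may therefore assume $\alpha$ preserves parity while $\beta$ swaps parity.

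Then $\varphi|_{H_e}$ is an isomorphism from $BY_e$ onto the induced subgraph $BX[K_{00}]$, where $K_{00}=\{(v,i)\in\ZZ_{2p}\times\ZZ_2:v+i\text{ even}\}$ is the cyclic subgroup of order~$2p$ generated by $(1,1)$. The subgraph $BX[K_{00}]$ is the Cayley graph on $K_{00}\cong\ZZ_{2p}$ whose connection set corresponds to $S_o$, and via the CRT isomorphism $\ZZ_{2p}\cong\ZZ_p\times\ZZ_2$ it is $B\Cay(\ZZ_p,\widehat S_o)$, where $\widehat S_o=\{s\bmod p:s\in S_o\}$. Since $\varphi|_{H_e}$ preserves the bipartitions of these two double covers, \cref{OddStableIso} supplies an isomorphism $\alpha_e\colon Y_e\to\Cay(\ZZ_p,\widehat S_o)$ with $\varphi|_{H_e}(x,i)=(\alpha_e(x),i)$ in the identified coordinates. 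By CI on~$\ZZ_p$, the map $\alpha_e$ has the form $x\mapsto\bar m x+c$ for some $\bar m\in\ZZ_p^\times$, and $\widehat S_o=\bar m(S_e/2)$.

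The crucial computation is now to unravel the identifications. For $v\in 2\ZZ_{2p}$, both $\sigma_e(v):=\alpha(v)\in 2\ZZ_{2p}$ and $\tau_e(v):=\beta(v)\in 1+2\ZZ_{2p}$ must reduce modulo $p$ to $\bar m(v/2)+c$, because under the identifications $\varphi|_{H_e}$ acts as $(x,i)\mapsto(\alpha_e(x),i)$ on both layers of the double cover. Hence $\tau_e(v)-\sigma_e(v)$ is an odd element of $\ZZ_{2p}$ divisible by~$p$, and the only such element is $p$ itself. Running the same argument on $H_o$ gives $\tau_o(v)-\sigma_o(v)=p$ as well, so $\beta(v)=\alpha(v)+p$ for all $v\in\ZZ_{2p}$. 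The edge condition from \cref{UnstableIffPerms} then rewrites as $\alpha(w)-\alpha(v)\in S+p\iff w-v\in S$, exhibiting $\alpha$ as an isomorphism $X\to\Cay(\ZZ_{2p},S+p)$. Since $2p$ is twice the square-free number~$p$, \cref{Muzychuk-CI} produces $m\in\ZZ_{2p}^\times$ with $S+p=mS$, i.e., Wilson type~\pref{Wilson-C4}. The main obstacle I anticipate is this final bookkeeping: tracking two chains of identifications (source $H_e\to BY_e\to B\Cay(\ZZ_p,S_e/2)$ and target $K_{00}\to\Cay(\ZZ_{2p},S_o)\to B\Cay(\ZZ_p,\widehat S_o)$) carefully enough to confirm that $\sigma_e(v)$ and $\tau_e(v)$ acquire the \emph{same} $\ZZ_p$-residue, which is what pins their difference to exactly $p$.
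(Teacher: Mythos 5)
There is a genuine gap at the pivot of your argument: the trichotomy ``both $\alpha$ and $\beta$ preserve parity / both swap parity / one of each'' is not exhaustive. Nothing you have established forces $\alpha$ (or $\beta$) to map $2\ZZ_{2p}$ onto a single parity class; a priori it could send some even vertices to even vertices and others to odd ones, in which case $\varphi$ does not carry $H_e$ onto either $H_e$ or $K_{00}$, and everything downstream (the restriction to $BY_e$, the appeal to \cref{OddStableIso}, the residue computation pinning $\beta(v)-\alpha(v)$ to~$p$) never gets started. What you are implicitly assuming is that $2\ZZ_{2p}\times\{0\}$ is a block for $\Aut BX$, and that is the central difficulty of the theorem, not a formality. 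The paper earns it with \cref{2S'-BX}: putting $S'=S\setminus(S+p)$, every automorphism of $BX$ preserves the auxiliary graph $\Cay\bigl(\ZZ_{2p}\times\ZZ_2,\,2S'\times\{0\}\bigr)$, whose components are the cosets of $\langle 2S'\rangle\times\{0\}$; when $2S'\neq\{0\}$, primality of~$p$ gives $\langle 2S'\rangle=2\ZZ_{2p}$ and the block structure follows. That argument collapses when $S'=\{p\}$ (the auxiliary graph is all loops), and the paper must eliminate this case by an entirely separate argument via \cref{0orbit}, showing it would force $S=\{p\}$ and contradict connectedness. Your proposal contains neither the mechanism for the generic case nor any treatment of the exceptional one.

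Granting the block structure, the rest of your outline is essentially the paper's own route (\cref{HalfIsBlock}, \cref{Interchange}, \cref{InterchangeCI}): eliminate the parity-preserving case using \cref{OddCirculant} (equivalently, Wilson type~\pref{Wilson-C1} is impossible on $2p$ vertices because $\ZZ_p$ is simple), extract from the interchanging case an isomorphism $X\cong\Cay(\ZZ_{2p},S+p)$, and finish with \cref{Muzychuk-CI}. One further small point: before invoking \cref{OddStableIso} you should verify that $\Cay(\ZZ_p,\widehat S_o)$ is twin-free; this does follow (its double cover is isomorphic to the twin-free graph $BY_e$), but it is not automatic from the hypotheses on~$X$.
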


The proof of this \lcnamecref{2p} will use several lemmas that are stated in greater generality than is needed here, because they may be useful for understanding the unstable circulant graphs of other square-free orders.

\begin{lem} \label{HalfIsBlock}
Let $X = \Cay(\ZZ_n, S)$ be a connected, nonbipartite, circulant graph, such that $n \equiv 2 \pmod{4}$.
Then $X$ has Wilson type~\pref{Wilson-C1} if and only if $BX$ has an automorphism~$\alpha$, such that
	\begin{enumerate} \itemsep=\smallskipamount
	\item $\alpha \notin \Aut X \times S_2$,
	and
	\item \label{HalfIsBlock-fix}
	$\alpha$ fixes $2\ZZ_n \times \ZZ_2$ \textup(setwise\textup).
	\end{enumerate}
\end{lem}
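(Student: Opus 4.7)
The plan is to prove both directions by using \cref{UnstableIffPerms} to translate between bipartition-fixing automorphisms of~$BX$ and pairs $(\sigma_0, \sigma_1)$ of permutations of~$\ZZ_n$ satisfying $\sigma_1(v) - \sigma_0(u) \in S$ whenever $v - u \in S$.

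For the forward direction, given a nonzero $h \in 2\ZZ_n$ with $h + S_e = S_e$, I would apply the construction in the proof of \fullcref{WilsonC1C3}{inKo} with $K = \ZZ_n$ and $H = \langle h \rangle$: the resulting $\sigma_0$ adds~$h$ to even vertices and fixes odd ones, while $\sigma_1$ does the opposite. Since $h$ is even, both $\sigma_0$ and~$\sigma_1$ preserve $2\ZZ_n$ setwise, so the associated automorphism $\alpha \in \Aut BX$ fixes $2\ZZ_n \times \ZZ_2$ setwise; and $\sigma_0(0) = h \neq 0 = \sigma_1(0)$ shows $\alpha \notin \Aut X \times S_2$.

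For the reverse direction, I would first reduce: composing with the swap $(v, i) \mapsto (v, 1-i) \in \Aut X \times S_2$ if needed (which preserves $2\ZZ_n \times \ZZ_2$ setwise), and then with a translation by an even element of~$\ZZ_n$, I may assume that $\alpha$ fixes each of $2\ZZ_n \times \{0\}$ and $2\ZZ_n \times \{1\}$ setwise and that $\alpha(0, 0) = (0, 0)$. Writing $\alpha(v, i) = (\sigma_i(v), i)$, we have $\sigma_0 \neq \sigma_1$ (since $\alpha \notin \Aut X \times S_2$), and both $\sigma_i$ preserve the partition $\ZZ_n = 2\ZZ_n \sqcup (1 + 2\ZZ_n)$. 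I would split into two cases. If $\sigma_0|_{2\ZZ_n} \neq \sigma_1|_{2\ZZ_n}$, these restrictions form a nontrivial unstable pair for $X_e := \Cay(2\ZZ_n, S_e)$ directly from the even-even edges of~$BX$. Otherwise, $\sigma_0$ and $\sigma_1$ agree on~$2\ZZ_n$ but disagree on $1 + 2\ZZ_n$, and translating the restrictions $\sigma_i|_{1 + 2\ZZ_n}$ by~$-1$ yields permutations $\rho_0, \tau_0$ of~$2\ZZ_n$ with $\rho_0 \neq \tau_0$; the odd-odd edges of~$BX$ (which occur precisely when the corresponding difference lies in~$S_e$) give exactly the unstable-pair condition $\tau_0(v') - \rho_0(u') \in S_e$ whenever $v' - u' \in S_e$, so $(\rho_0, \tau_0)$ is again a nontrivial unstable pair for~$X_e$.

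In either case, $X_e$ is an unstable circulant of odd order~$n/2$, so \cref{OddCirculant} forces it to be bipartite, to have twins, or to be disconnected. Since any connected Cayley graph on an odd-order abelian group with a nonempty connection set contains an odd cycle, the bipartite case forces $S_e = \emptyset$, whence any nonzero $h \in 2\ZZ_n$ trivially satisfies $h + S_e = S_e$; having twins is Wilson type~\pref{Wilson-C1} by \cref{TwinFreeIffNoTranslation}. In the remaining disconnected subcase (with $S_e \neq \emptyset$), I would set $H_1 := \langle S_e \rangle \subsetneq 2\ZZ_n$ and observe that each connected component of~$BX_e$ is the bipartite double cover of a translate of the connected nonbipartite circulant $\Cay(H_1, S_e)$; the automorphism of~$BX_e$ associated with the unstable pair permutes these components and must send $(c, 0)$ and $(c, 1)$ into a common component, which forces the two ``halves'' of the pair to induce the same permutation of cosets of~$H_1$ in~$2\ZZ_n$. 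Restricting to any coset where they disagree then yields a nontrivial unstable pair for the connected nonbipartite odd-order circulant $\Cay(H_1, S_e)$, and a second application of \cref{OddCirculant} forces it to have twins, which lift to a nonzero $h \in H_1 \subseteq 2\ZZ_n$ with $h + S_e = S_e$. The main obstacle is the second branch of the case split: there $\alpha$ looks trivial on $2\ZZ_n \times \ZZ_2$ but is nontrivial overall, and the shift-by-$1$ trick is what extracts an unstable pair for~$X_e$ from the action of~$\alpha$ on the odd half.
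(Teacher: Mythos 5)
Your argument is correct, and in the reverse direction it is organized somewhat differently from the paper's. Both proofs rest on the same two pillars --- restricting the hypothesized automorphism to the subgraph $BX_e$ induced on $2\ZZ_n \times \ZZ_2$, and invoking \cref{OddCirculant} for the odd-order graph that results --- but they diverge in how they cope with the fact that $X_e = \Cay(2\ZZ_n, S_e)$ need not be connected or twin-free. The paper first conjugates by a translation so that a ``witness'' of $\alpha \notin \Aut X \times S_2$ sits at $0$ (this single normalization absorbs both of your cases, since conjugating by an odd translation still preserves $2\ZZ_n \times \ZZ_2$), then restricts $\alpha$ to the connected component of $BX_e$ containing the fixed point $(0,0)$, and applies \cref{OddCirculant} once to that component to force $\alpha(0,1)=(0,1)$. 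You instead extract an explicit unstable pair for all of~$X_e$ via \cref{UnstableIffPerms} (your shift-by-$1$ trick in the second case playing the role of the paper's odd conjugation), read off from \cref{OddCirculant} that $X_e$ must be disconnected, bipartite, or have twins, and then dispose of the disconnected case by a second descent to $\Cay(\langle S_e\rangle, S_e)$ --- using that the components of $BX_e$ are the sets $(c + \langle S_e\rangle)\times\ZZ_2$ to see that the two halves of your pair induce the same coset permutation --- followed by a second application of \cref{OddCirculant}. Your version is longer, but it makes fully explicit the reduction to a connected, nonbipartite, twin-free graph of odd order that the paper's one-line appeal to \cref{OddCirculant} leaves implicit; the forward direction is the same construction in both proofs.
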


\begin{proof}
($\Rightarrow$) This is the easy half of the proof (and does not require the assumption that $n \equiv 2 \pmod{4}$).
If $h + S_e = S_e$, then we may define $\alpha \in \Aut(BX)$ by\refnote{2p-alpha-aid}
	\[ \alpha(x,i) = \begin{cases}
		(x + h, i) & \text{if $x \equiv i \pmod{2}$}, \\
		(x, i) & \text{if $x \not\equiv i \pmod{2}$}
		. \end{cases} \]

($\Leftarrow$)
Since $\alpha \notin \Aut X \times S_2$, there is some $v \in \ZZ_n$, such that $\alpha(v,1) \neq \alpha(v,0) + (0,1)$.
After conjugating $\alpha$ by a translation that moves $(v,0)$ to $(0,0)$, we may assume $v = 0$. This means that $\alpha(0,0) = (0,0)$ and $\alpha(0,1) \neq (0,1)$.

Let $S_e = S \cap 2\ZZ_n$, and let $X_e = \Cay(2\ZZ_n, S_e)$, so the subgraph of~$X$ induced by $2\ZZ_n \times \ZZ_2$ is~$BX_e$. Then assumption~\pref{HalfIsBlock-fix} implies that $\alpha$ restricts to an automorphism of~$BX_e$. 

Now, let $X_e'$ be the connected component of~$X_e$ that contains~$0$. Since $n \equiv 2 \pmod{4}$, we know that $n/2$ is odd, so $|\langle S_e \rangle|$ is odd. This implies that $BX_e'$ is connected, and is therefore a connected component of~$BX_e$. Since $BX_e'$ contains the fixed point $(0,0)$, we conclude that $BX_e'$ is $\alpha$-invariant. Therefore, $\alpha$ restricts to an automorphism of $BX_e'$. Since $\alpha$ fixes~$(0,0)$, then it follows from \cref{OddCirculant} that $\alpha(0,1) = (0,1)$. This is a contradiction.
 \end{proof}
 
 \begin{lem}[Klin-Muzychuk, 1995, personal communication] \label{IsoToCirculant}
 Let 
 \noprelistbreak
 	\begin{itemize}
	\item $X = \Cay(\ZZ_n, S)$ be a circulant graph of order~$n$, 
 	\item $G$ be an abelian group of order~$n$, 
 	\item $e \in \ZZ$, such that $eg = 0$ for all $g \in G$,
 	and
 	\item $m \in \ZZ$, such that $m \equiv 1 \pmod{e}$ and $\gcd(m,n) = 1$.
	\end{itemize}
 If $X$ is isomorphic to some Cayley graph $\Cay(G, T)$ on~$G$, then $S = mS$.
 \end{lem}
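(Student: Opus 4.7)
The plan is to apply \cref{mSiso} to the given isomorphism, using the specific integer~$m$ from the hypothesis. Let $\varphi$ be an isomorphism from $\Cay(\ZZ_n, S)$ to $\Cay(G, T)$. Both $\ZZ_n$ and~$G$ are abelian of order~$n$, and since $\gcd(m,n)=1$, multiplication by~$m$ is a bijection on each group, so $ms \neq mt$ whenever $s \neq t$ for $s,t$ in either $S$ or~$T$; that is, hypothesis~\fullcref{mSiso}{neq} is satisfied. \Cref{mSiso} therefore tells us that $\varphi$ is also an isomorphism from $\Cay(\ZZ_n, mS)$ to $\Cay(G, mT)$.

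The key observation is that $mT = T$. Indeed, writing $m = 1 + ke$ for some $k \in \ZZ$, we have $mg = g + k(eg) = g$ for every $g \in G$, since $eg = 0$ by assumption. Hence multiplication by~$m$ is the identity on~$G$, and in particular $mT = T$. So $\varphi$ is in fact an isomorphism from $\Cay(\ZZ_n, mS)$ to $\Cay(G, T)$.

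Thus the single set bijection $\varphi \colon \ZZ_n \to G$ serves as an isomorphism from \emph{both} $\Cay(\ZZ_n, S)$ and $\Cay(\ZZ_n, mS)$ onto the same target graph $\Cay(G, T)$. For any nonzero $s \in \ZZ_n$, this forces $s \in S$ if and only if $\varphi(s) - \varphi(0) \in T$ if and only if $s \in mS$, giving $S = mS$ as desired. The whole argument is a formal consequence of \cref{mSiso}, and the only real content is recognizing that the chosen~$m$ acts as the identity on~$G$; I do not foresee any significant obstacle.
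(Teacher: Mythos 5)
Your proof is correct, but it takes a genuinely different route from the paper's. The paper proves \cref{IsoToCirculant} by the spectral argument of Klin and Muzychuk: every eigenvalue of $\Cay(G,T)$ is a sum of $e$th roots of unity, hence is fixed by the Galois automorphism $\zeta \mapsto \zeta^m$ of $\QQ[\zeta]/\QQ$, so the eigenvalue list $\bigl( \sum_{s \in S} \zeta^{is} \bigr)_i$ of~$X$ is fixed by that automorphism as well, and invertibility of the Vandermonde matrix then forces $S = mS$. You instead deduce the lemma from \cref{mSiso}: since $\gcd(m,n)=1$, multiplication by~$m$ is injective on both groups of order~$n$, so hypothesis~\pref{mSiso-neq} holds, and $\varphi$ is therefore also an isomorphism from $\Cay(\ZZ_n, mS)$ to $\Cay(G, mT) = \Cay(G, T)$, the last equality because $m \equiv 1 \pmod{e}$ makes multiplication by~$m$ the identity on~$G$; comparing adjacency to a fixed vertex under the single bijection~$\varphi$ then gives $S = mS$. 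This is sound, and it has the merit of reusing machinery the paper develops in this very section while avoiding characters, eigenvalues, and the Vandermonde matrix; the paper's version is the argument as actually communicated by Klin and Muzychuk and makes the role of the condition $m \equiv 1 \pmod{e}$ conceptually transparent. One small point of hygiene: \cref{mSiso} is stated for $m \in \ZZ^+$, whereas the present lemma allows $m \in \ZZ$; since replacing $m$ by $m + kne$ changes neither the hypotheses ($m \equiv 1 \pmod{e}$ and $\gcd(m,n)=1$) nor the set $mS \subseteq \ZZ_n$, you should first pass to a positive representative of~$m$ modulo~$ne$ before invoking \cref{mSiso}.
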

 
 \begin{proof}[Proof \normalfont(Klin-Muzychuk)]
 Let $\zeta$ be a primitive $n$th root of unity, and, for $1 \le i \le n$, let $\lambda_i = \sum_{s \in S} \zeta^{is}$, so $\lambda_1,\lambda_2, \ldots,\lambda_n$ are the eigenvalues of~$X$ \cite[Prop.~3.5, p.~16]{Biggs}.
 Since $\gcd(m,n) = 1$, there is a Galois automorphism~$\alpha$ of the field extension $\QQ[\zeta]/\QQ$, such that $\alpha(\zeta) = \zeta^m$ \cite[Thm.~6.3.1, p.~278]{Lang-Algebra}. 
  Since $eg = 0$ for all $g \in G$ (and $G$ is abelian), we know that each eigenvalue of~$\Cay(G,T)$ is a sum of $e$th roots of unity. (The range of any homomorphism $\chi \colon G \to \CC^\times$ consists of $e$th roots of unity, so this is a consequence of the well-known formula in \cite[Cor.~3.2]{Babai-spectra} that generalizes the above formula for the eigenvalues of a circulant graph.) Since $m \equiv 1 \pmod{e}$, this implies that every eigenvalue of~$\Cay(G,T)$ is fixed by~$\alpha$. So $\lambda_1,\lambda_2, \ldots,\lambda_n$ are fixed by~$\alpha$. For $1 \le i \le n$, this means 
  	\[ \sum_{s \in S} \zeta^{is} = \lambda_i = \alpha(\lambda_i) = \alpha \left( \sum_{s \in S} \zeta^{is} \right) = \sum_{s \in S} \zeta^{mis} = \sum_{s \in mS} \zeta^{is} . \] 
Since the Vandermonde matrix $[\zeta^{ij}]_{1 \le i,j \le n}$ is invertible \cite{Vandermonde} (and therefore has linearly independent rows), this implies that $S = mS$.\refnote{IsoToCirculant-aid}
 \end{proof}
 
\begin{lem} \label{Interchange}
Assume $X = \Cay(\ZZ_n, S)$ is a nontrivially unstable, circulant graph of even order, and there exists $\alpha \in \Aut BX$, such that $\alpha$ fixes the two cosets of $2\ZZ_n \times \{0\}$ that are in $\ZZ_n \times \{0\}$, but interchanges the two cosets that are in $\ZZ_n \times \{1\}$.
If $X$ does not have Wilson type~\pref{Wilson-C1}, then:
	\begin{enumerate}
	\item \label{Interchange-2mod4}
	$n \equiv 2 \pmod{4}$,
	\item \label{Interchange-generate}
	$\langle S \cap 2\ZZ_n \rangle = 2\ZZ_n$,
	and
	\item \label{Interchange-iso}
	$X \cong \Cay(\ZZ_n, S + (n/2))$ \textup(so \cref{IsoTranslateS} applies\textup).
	\end{enumerate}
\end{lem}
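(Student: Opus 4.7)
The plan is to normalize $\alpha$ so that $\alpha(0,0)=(0,0)$ (using that $\alpha$ preserves $2\ZZ_n\times\{0\}$, so composing with an even translation achieves this without disturbing the coset hypothesis), and then write $\alpha(v,i)=(\alpha_i(v),i)$. The coset hypothesis gives $\alpha_0\colon\ZZ_n\to\ZZ_n$ parity-preserving with $\alpha_0(0)=0$, and $\alpha_1\colon\ZZ_n\to\ZZ_n$ parity-reversing with $\alpha_1(0)=t$ odd. Adjacency-preservation of $\alpha$ yields the key identity
\[ \alpha_1(u+s)-\alpha_0(u)\in S\qquad\text{for all }u\in\ZZ_n,\ s\in S; \]
parity shows this element lies in $S_o$ when $s\in S_e$ and in $S_e$ when $s\in S_o$. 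Taking $u=0$, $\alpha_1$ restricts to bijections $S_e\to S_o$ and $S_o\to S_e$, so $|S_e|=|S_o|$, and in particular $S_e\ne\emptyset$ (using $S_o\ne\emptyset$ from nonbipartiteness).

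To prove~(1), the plan is to apply \cref{HalfIsBlock} to~$\alpha^2$. Since $\alpha$ fixes the two layer-$0$ cosets and swaps the two layer-$1$ cosets, $\alpha^2$ fixes all four cosets and in particular stabilizes $2\ZZ_n\times\ZZ_2$. When $n\equiv 2\pmod 4$, \cref{HalfIsBlock} together with the absence of Wilson type~\pref{Wilson-C1} forces $\alpha^2\in\Aut X\times S_2$, which is consistent. To rule out $n\equiv 0\pmod 4$, I would exploit the order-$2$ element $n/2\in 2\ZZ_n$ together with the $\alpha_1$-swap of $S_e$ and $S_o$ to construct a nontrivial $h\in 2\ZZ_n$ with $h+S_e=S_e$, i.e.\ a Wilson~\pref{Wilson-C1} obstruction contradicting the hypothesis.

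For~(2) and~(3), having $n\equiv 2\pmod 4$ in hand, $n/2$ is odd, so the function $c(v):=\alpha_0(v)-\alpha_1(v)$ (always odd) can attain the value~$n/2$. The plan is to re-normalize $\alpha$ by replacing it with $T_b\alpha T_a$ for suitable even $a$ (and then $b=-\alpha_0(a)$, which keeps the coset hypothesis intact), forcing $c\equiv n/2$; once this holds, the key identity rewrites as $\alpha_0(u+s)-\alpha_0(u)\in S+n/2$, so $\alpha_0$ is an isomorphism $X\to\Cay(\ZZ_n,S+n/2)$, giving~(3). For~(2), this same $\alpha_0$ restricts to an isomorphism $\Cay(2\ZZ_n,S_e)\to\Cay(2\ZZ_n,S_o+n/2)$; combined with the bijection $\alpha_1\colon S_e\to S_o$ and connectedness of~$X$ (which gives $\langle S_e\cup S_o\rangle=\ZZ_n$), a component-counting argument on $\Cay(2\ZZ_n,S_e)$ forces $\langle S_e\rangle=2\ZZ_n$.

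The main obstacles I anticipate are (a)~ruling out $n\equiv 0\pmod 4$ in the proof of~(1), and (b)~forcing $c\equiv n/2$ through a suitable re-normalization for~(3); both require delicate use of the no Wilson~\pref{Wilson-C1} assumption combined with the twin-freeness of~$X$, and I expect this is where the technical heart of the proof will lie.
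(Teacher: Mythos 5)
Your setup (the normalization, the key identity $\alpha_1(u+s)-\alpha_0(u)\in S$, and the parity bookkeeping showing $\alpha_1$ carries $S_e$ into $S_o$ and vice versa) is correct, but the two steps you yourself flag as the technical heart are not actually supplied, and one of your proposed mechanisms cannot work as stated.

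First, for ruling out $n\equiv 0\pmod 4$: your sketch (``exploit the order-$2$ element $n/2$ together with the $\alpha_1$-swap to construct $h$ with $h+S_e=S_e$'') names a goal, not an argument. The paper's mechanism is quite different and genuinely nontrivial: the restriction of $\alpha$ is an isomorphism from $B_e=\Cay\bigl(2\ZZ_n\times\ZZ_2, S_e\times\{1\}\bigr)$ to $B_o=\Cay\bigl(G_o,S_o\times\{1\}\bigr)$ where $G_o=\langle(1,1)\rangle$ is cyclic of order~$n$; when $4\mid n$ the group $2\ZZ_n\times\ZZ_2$ has exponent $n/2$, and \cref{IsoToCirculant} (the Klin--Muzychuk eigenvalue lemma) applied with $m=1+n/2$ forces $S_o+\nh=S_o$. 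One then transfers the resulting even-sized twin classes of $B_o$ across the isomorphism to get $S_e+\nh=S_e$, hence $S+\nh=S$, contradicting twin-freeness (not Wilson type~\pref{Wilson-C1}, which the paper notes is not needed for this part). You would need to find and justify some such mechanism; nothing in your outline produces the element~$h$.

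Second, the re-normalization for~(3) is flawed. If you replace $\alpha$ by $T_b\alpha T_a$, the new discrepancy function is $c'(v)=c(v+a)$ (the additive constant $b$ cancels in $\alpha_0'-\alpha_1'$), so translations only reparametrize $c$; they cannot make a non-constant $c$ identically equal to $\nh$, and there is no reason for $c$ to be constant for an arbitrary unstable automorphism. Consequently ``$\alpha_0$ is an isomorphism $X\to\Cay(\ZZ_n,S+\nh)$'' does not follow. The paper avoids this entirely: it composes $\alpha|_{G_e}$ with the shift $\varphi(x,i)=(x+i\nh,i)$ to get an isomorphism $BX_e\to BX_o$, invokes the twin-freeness of $X_e$ (this is exactly where ``no Wilson type~\pref{Wilson-C1}'' enters) together with \cref{OddStableIso} to descend to an isomorphism $\alpha_0\colon X_e\to X_o$ which, by a second application in the reverse direction, is simultaneously an isomorphism $X_o\to X_e$; the desired map $\ZZ_n\to\ZZ_n$ is then built piecewise from $\alpha_0$ on the two parity classes, and is not the restriction of $\alpha$ at all. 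Your part~(2) inherits the same gap, since it relies on the isomorphism produced in~(3).
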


\begin{proof}
Let 
	\begin{itemize}
	\item $S_e = S \cap 2\ZZ_n$, 
	\item $S_o = S \setminus S_e$, 
	\item $G_e = 2\ZZ_n \times \ZZ_2$,
	\item $G_o = \langle 2\ZZ_n \times \{0\} , (1,1) \rangle 
		= \langle (1,1) \rangle
		= \bigl( 2\ZZ_n \times \{0\} \bigr) \cup \bigl( (2 \ZZ_n + 1) \times \{1\} \bigr)$,
	\item $X_o = \Cay(2 \ZZ_n , S_o + \nh)$,
	\item $B_e = BX_e = \Cay \bigl( G_e , S_e \times \{1\} \bigr)$ be the subgraph of~$BX$ induced by~$G_e$, 
	\item $B_o = \Cay \bigl( G_o, S_o \times \{1\} \bigr)$ be the subgraph of~$BX$ induced by~$G_o$,
	and
	\item $\nh = n/2$ (see \cref{nh}).
	\end{itemize}
By assumption, 
	\[ \text{the restriction of~$\alpha$ to~$G_e$ is an isomorphism from~$B_e$ to~$B_o$.} \]

\pref{Interchange-2mod4}
[The proof of this part of the \lcnamecref{Interchange} does not require the assumption that $X$ does not have Wilson type~\pref{Wilson-C1}.]
Suppose $n \not\equiv 2 \pmod{4}$. Then $\gcd(1 + \nh, n) = 1$. Therefore, since $G_o$ is cyclic, and $\nh g = 0$ for all $g \in G_e$,  we see from \cref{IsoToCirculant} that 
	\[ S_o = (1 + \nh) S_o = S_o + \nh . \]
This means that $B_o$ has twins. More precisely, since $|\nh| = 2$, each equivalence class of vertices with the same neighbors has even cardinality. So the same must be true in the isomorphic graph~$B_e$, which means there exists an element $(h_1,h_2)$ of order~$2$ in~$G_e$, such that 
	\[ \bigl( S_e \times \{1\}\bigr) + (h_1,h_2) = S_e \times \{1\} . \]
Since the element of order~$2$ in the cyclic group $2\ZZ_n$ is unique (and the equation $1 + h_2 = 1$ implies that $h_2 = 0$), we conclude that $S_e$ is invariant under translation by~$\nh$. We already know that the same is true for~$S_o$, so we conclude that $S + \nh = S$. This contradicts the assumption that $X$ is nontrivially unstable (and therefore twin-free).

(\ref{Interchange-generate})
Since $n \equiv 2 \pmod{4}$, we know that $\nh$ is odd. So $\ZZ_n = 2\ZZ_n \cup (2\ZZ_n + \nh)$.  For convenience, label the $4$~cosets of $2\ZZ_n \times \{0\}$ by
	\[ \text{$C_e^i = 2\ZZ_n \times \{i\}$
	\  and \ 
	$C_o^i = (2\ZZ_n + \nh) \times \{i\}$ \quad for $i\in \ZZ_2$}
	 .\]
Then $G_e = C_e^0 \cup C_e^1$ and $G_o = C_e^0 \cup C_o^1$.
By assumption, $\alpha$ fixes $C_e^0$ and~$C_o^0$, but interchanges $C_e^1$ and~$C_o^1$.

The function $\varphi(x,i) = (x + i \nh, i)$ is an isomorphism\refnote{Interchange-generate-aid}
	\[ \text{from $B_o = \Cay \bigl( G_o , S_o \times \{1\} \bigr)$ to $BX_o = \Cay \bigl( 2 \ZZ_n \times \ZZ_2,  (S_o + \nh) \times \{1\} \bigr)$.} \]
By composing $\varphi$ with the restriction of~$\alpha$ to~$G_e$ (that is, by restricting the map $\varphi \bigl( \alpha(x,i) \bigr)$ to~$G_e$), we obtain an isomorphism from~$BX_e$ to~$BX_o$. Since $X$ does not have Wilson type~\pref{Wilson-C1}, we know that $X_e$ is twin-free (see \cref{TwinFreeIffNoTranslation}), so we see from \cref{OddCirculant} that each connected component of~$X_e$ is isomorphic to a connected component of~$X_o$.
Hence, these two connected components must have the same order, which means that the two subgroups $\langle S_e \rangle$ and $\langle S_o + \nh \rangle$ of~$\ZZ_n$ have the same order, and are therefore equal. So 
	\[ \langle S_e, \nh \rangle \supseteq \langle S_e \cup S_o \rangle = \langle S \rangle = \ZZ_n . \]
Hence, $\langle S_e \rangle$ is a subgroup of index~$\le 2$ in~$\ZZ_n$, and must therefore be all of~$2\ZZ_n$. This establishes~\pref{Interchange-generate}.

(\ref{Interchange-iso})
Let us begin by making a part of the above proof of~\pref{Interchange-generate} more concrete.
It was established there that composing $\varphi$ with the restriction of~$\alpha$ to~$G_e$ is an isomorphism from~$BX_e$ to~$BX_o$. Since $X_e$ is twin-free, we therefore see from \cref{OddStableIso} that there is an isomorphism $\alpha_0 \colon X_e \to X_o$, such that\refnote{Interchange-iso-aid-XetoXo}
	\[ \text{$\alpha(x,1) = \bigl( \alpha_0(x) + \nh , 1 \bigr)$ for $x \in 2\ZZ_n$} . \]
Similarly, if we let $\varphi'(x,i) = \bigl( x + (1 -  i)\nh, i \bigr)$, then restricting the map $\alpha \bigl( \varphi'(x,i) \bigr) - (\nh, 0)$ to $G_e$ yields an isomorphism from~$BX_o$ to~$BX_e$.\refnote{Interchange-iso-aid-XotoXe} So there is an isomorphism $\alpha_1 \colon X_o \to X_e$, such that
	\[ \text{$\alpha(x,1) = \bigl( \alpha_1(x) + \nh, 1 \bigr)$ for $x \in 2\ZZ_n$} . \]
By comparing the two formulas for $\alpha(x,1)$, we see that $\alpha_0 = \alpha_1$.

Now, define $\alpha' \colon \ZZ_n \to \ZZ_n$ by
	\[ \alpha'(x) = \begin{cases}
		\alpha_0(x) & \text{if $x \in 2\ZZ_n$}, \\
		\alpha_0(x + \nh) + \nh & \text{if $x \notin 2\ZZ_n$}
		. \end{cases} \]
Since $S = S_e \cup S_o $ (and therefore $S + \nh = (S_o + \nh) \cup (S_e + \nh)$), it will suffice to show that 
	\[ \text{$\alpha'$ is an isomorphism from $\Cay(\ZZ_n, S_e)$ to $\Cay(\ZZ_n, S_o + \nh)$} \]
and 
	\[ \text{$\alpha'$ is an isomorphism from $\Cay(\ZZ_n, S_o)$ to $\Cay(\ZZ_n, S_e + \nh)$} .\]

The first is easy to see from the definition of~$\alpha'$, because\refnote{Interchange-iso-alphaprime}
	\[ \text{$\alpha_0$ is an isomorphism from $X_e = \Cay(2 \ZZ_n, S_e)$ to $X_o = \Cay(2\ZZ_n, S_o + \nh)$.} \]

To establish the second, let $s \in S_o$. If $x \in 2 \ZZ_n$, then
	\begin{align*}
	\alpha'(x + s) 
	&= \alpha_0 \bigl( (x + s) + \nh \bigr) + \nh
		&& \text{(definition of~$\alpha'$, since $x + s \notin 2 \ZZ_n$)}
	\\&\in \alpha_0(x) + S_e + \nh
		&& \begin{pmatrix} \text{$\alpha_0 = \alpha_1$ is an isomorphism}
			\\ \text{from $X_o$ to $X_e$} \end{pmatrix}
	\\&= \alpha'(x) + (S_e + \nh)
	.
		&& \text{(definition of~$\alpha'$, since $x \in 2 \ZZ_n$)}
	\end{align*}
Similarly, if $x \notin 2\ZZ_n$, then
	\begin{align*}
		\alpha'(x + s)
		&= \alpha_0(x + s) 
		&& \begin{pmatrix} \text{definition of~$\alpha'$,} \\ \text{since $x + s \in 2 \ZZ_n$} \end{pmatrix}
		\\&= \alpha_0 \bigl( (x + \nh) + (s + \nh) \bigr)
			&& \text{($\nh + \nh = 0$)}
		\\&\in \alpha_0(x + \nh) + S_e
		&& \begin{pmatrix} \text{$\alpha_0 = \alpha_1$ is an isomorphism}
			\\ \text{from $X_o$ to $X_e$} \end{pmatrix}
		\\&= \bigl( \alpha_0(x + \nh) + \nh \bigr) + \bigl( S_e + \nh \bigr)
			&& \text{($\nh + \nh = 0$)}
		\\&= \alpha'(x) + \bigl( S_e + \nh \bigr)
		.
		&& \begin{pmatrix} \text{definition of~$\alpha'$,} \\ \text{since $x \notin 2 \ZZ_n$} \end{pmatrix}
	\end{align*}
Since $\alpha'(x + s) \in \alpha'(x) + (S_e + \nh)$ in both cases, we conclude that $\alpha'$ is an isomorphism from $\Cay(\ZZ_n, S_o)$ to $\Cay(\ZZ_n, S_e + \nh)$, as desired.
This completes the proof of~\pref{Interchange-iso}.
\end{proof}

The following result gathers the most important conclusions of \cref{HalfIsBlock,Interchange}.

\begin{cor} \label{2ZBlock}
Let $X = \Cay(\ZZ_n, S)$ be a nontrivially unstable, circulant graph, such that $n \equiv 2 \pmod{4}$, and such that\/ $2\ZZ_n \times \{0\}$ is a block for the action of $\Aut BX$. Then either $X$~has Wilson type~\pref{Wilson-C1}, or $X$ is isomorphic to $\Cay \bigl( \ZZ_n, S + (n/2) \bigr)$ \textup(so \cref{IsoTranslateS} applies\textup).
\end{cor}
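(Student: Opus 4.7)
The plan is to assume $X$ does not have Wilson type~\pref{Wilson-C1} and then produce an automorphism of~$BX$ that meets the hypotheses of \cref{Interchange}; conclusion~\pref{Interchange-iso} of that \lcnamecref{Interchange} then gives $X \cong \Cay(\ZZ_n, S + \nh)$, as desired.

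Since $X$ is unstable, pick some $\alpha \in \Aut BX \setminus (\Aut X \times S_2)$. The bipartition swap $\sigma(v,i) = (v, i+1)$ lies in $\Aut X \times S_2$, so composing $\alpha$ with~$\sigma$ if necessary allows me to assume $\alpha$ preserves the bipartition of~$BX$ (the composition stays outside $\Aut X \times S_2$ because that set is a subgroup). The block hypothesis on $2\ZZ_n \times \{0\}$ means that $\Aut BX$ permutes the four cosets of this subgroup in $\ZZ_n \times \ZZ_2$; using the notation $C_e^0, C_o^0, C_e^1, C_o^1$ from the proof of \cref{Interchange}, $\alpha$~therefore permutes $\{C_e^0, C_o^0\}$ and $\{C_e^1, C_o^1\}$ among themselves.

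Next, since $n \equiv 2 \pmod{4}$ makes $\nh$ odd, the translation $\tau(x,i) = (x + \nh, i)$ lies in $\Aut X \times S_2$ and swaps $C_e^j$ with $C_o^j$ for each $j \in \{0,1\}$. Composing with $\tau$ if necessary, I can arrange that $\alpha$ fixes $C_e^0$, hence also $C_o^0$. Two cases then remain: either $\alpha$ also fixes both $C_e^1$ and $C_o^1$---in which case $\alpha$ fixes $2\ZZ_n \times \ZZ_2$ setwise, so \cref{HalfIsBlock} forces $X$ to have Wilson type~\pref{Wilson-C1}, contradicting the standing assumption---or $\alpha$ interchanges $C_e^1$ and $C_o^1$, in which case the hypotheses of \cref{Interchange} are satisfied and its conclusion~\pref{Interchange-iso} finishes the argument.

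The main obstacle is really just bookkeeping: all the substantive content has been packaged into \cref{HalfIsBlock,Interchange}, and what is left is the case analysis of how $\alpha$ can act on the four cosets. The only point to watch is that each composition used to normalize $\alpha$ is by an element of $\Aut X \times S_2$, which guarantees the resulting automorphism remains outside that subgroup so that \cref{HalfIsBlock,Interchange} may be applied.
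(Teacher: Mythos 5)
Your proposal is correct and follows essentially the same route as the paper: normalize the unstable automorphism (the paper does this by composing with a translation fixing $(0,0)$, you by composing with the bipartition swap and the translation by $(n/2,0)$, which amounts to the same thing), then split into the two cases of how it acts on the remaining cosets, handled by \cref{HalfIsBlock} and \fullcref{Interchange}{iso} respectively. No gaps.
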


\begin{proof}
Since $X$ is unstable, there exists $\alpha \in \Aut BX$, such that $\alpha \notin \Aut X \times S_2$. By composing with a translation, we may assume that $\alpha$ fixes $(0,0)$. Since $2\ZZ_n \times \{0\}$ is a block for the action of $\Aut BX$, this implies that $\alpha$ fixes $2\ZZ_n \times \{0\}$. It must also fix the bipartition set $\ZZ_n \times \{0\}$, so it fixes the difference
	\[ \bigl( \ZZ_n \times \{0\} \bigr) \setminus \bigl( 2\ZZ_n \times \{0\} \bigr) = (2\ZZ_n + 1) \times \{0\} . \]
And then $\alpha$ either fixes the two remaining cosets, or interchanges them. In the first case, \cref{HalfIsBlock} tells us that $X$ has Wilson type~\pref{Wilson-C1}. In the second case, \fullcref{Interchange}{iso} provides the desired conclusion.
\end{proof}

The following result presents some useful special cases.
Recall that the ``Cayley Isomorphism Property'' was defined in \cref{CIDefn}.

\begin{cor} \label{InterchangeCI}
Assume $X$ is as in \cref{2ZBlock}. Also assume that either
	\begin{enumerate}
	\item \label{InterchangeCI-X}
	$X$ has the Cayley Isomorphism Property, 
	or
	\item \label{InterchangeCI-Xe}
	$X_e$ has the Cayley Isomorphism Property, 
	or
	\item \label{InterchangeCI-squarefree}
	$n$~is square-free,
	or 
	\item \label{InterchangeCI-valency}
	the valency of~$X_e$ is $\le 5$.
	\end{enumerate}
Then $X$ has Wilson type~\pref{Wilson-C1} or~\pref{Wilson-C4}.
\end{cor}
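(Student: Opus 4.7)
The first step is to apply \cref{2ZBlock}, which immediately splits off one case: either $X$ already has Wilson type~\pref{Wilson-C1} (and there is nothing more to prove), or $X \cong \Cay(\ZZ_n, S + (n/2))$. Assume the latter throughout; because $-(n/2) = n/2$ in~$\ZZ_n$, exhibiting Wilson type~\pref{Wilson-C4} is equivalent to producing $m \in \ZZ_n^\times$ with $mS = S + (n/2)$.

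Hypotheses~\pref{InterchangeCI-X} and~\pref{InterchangeCI-squarefree} both supply CI of~$X$ directly: \pref{InterchangeCI-X} by assumption, and \pref{InterchangeCI-squarefree} via \cref{Muzychuk-CI}, since \fullcref{Interchange}{2mod4} forces $n/2$ to be odd, so $n$ is twice an odd number and is square-free exactly when $n/2$ is. With CI of~$X$, the isomorphism $X \cong \Cay(\ZZ_n, S + (n/2))$ is realised by multiplication by some $m \in \ZZ_n^\times$, which is precisely the desired conclusion.

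Hypothesis~\pref{InterchangeCI-valency} reduces to~\pref{InterchangeCI-Xe}. Because $n/2$ is odd, $2\ZZ_n$ contains no involution, so $|S_e|$ must be even; combined with $|S_e| \le 5$ and $S_e \neq \emptyset$ (which follows from $X$ being nonbipartite), this forces $|S_e| \in \{2,4\}$. In these valencies, connected circulants over cyclic groups are known to have the CI property (trivially at valency~$2$, since $X_e$ is then a cycle), placing us in case~\pref{InterchangeCI-Xe}.

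The subtle case is~\pref{InterchangeCI-Xe}. The isomorphism $X \cong \Cay(\ZZ_n, S + (n/2))$ delivered by \fullcref{Interchange}{iso} is the piecewise map~$\alpha'$ assembled from an isomorphism $\alpha_0 \colon X_e \to X_o$ (where $X_o = \Cay(2\ZZ_n, S_o + (n/2))$), crucially using the identity $\alpha_0 = \alpha_1$ so that the same function is also an isomorphism $X_o \to X_e$. Applying CI of~$X_e$ to $X_e \cong X_o$ yields~$m$ coprime to~$n/2$ with $mS_e = S_o + (n/2)$; after replacing $m$ by $m + n/2$ if needed (legitimate because $n/2$ is odd), we may take $m \in \ZZ_n^\times$. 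Substituting the affine isomorphism $\tilde\alpha_0(x) = mx$ into the formula for~$\alpha'$ and using $m$~odd (so that $(m+1)(n/2) \equiv 0 \pmod n$) produces $\tilde\alpha'(x) = mx$ on all of~$\ZZ_n$; provided this $\tilde\alpha'$ is actually an isomorphism to $\Cay(\ZZ_n, S + (n/2))$, we obtain $mS = S + (n/2)$ and therefore Wilson~\pref{Wilson-C4}. The main obstacle is precisely that verification: the construction of~$\alpha'$ in~\fullref{Interchange}{iso} uses the identity $\alpha_0 = \alpha_1$, and for $\tilde\alpha_0$ the analogue amounts to the extra condition $mS_o = S_e + (n/2)$. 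The plan to close this gap is to apply CI a second time to the reverse isomorphism $X_o \cong X_e$, obtaining a companion multiplier~$m'$ with $m'S_o = S_e + (n/2)$, and then exploit the coset structure of~$\ZZ_n^\times$ modulo the setwise stabilizer of~$S_e$, together with the involution symmetry $-S = S$, to conclude that a single multiplier serves both halves of~$S$.
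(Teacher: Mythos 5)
Your handling of hypotheses \pref{InterchangeCI-X}, \pref{InterchangeCI-squarefree}, and \pref{InterchangeCI-valency} matches the paper: reduce via \cref{2ZBlock} to the isomorphism $X \cong \Cay(\ZZ_n, S + (n/2))$, then in those three cases either obtain a multiplier from the CI property of~$X$ itself (via \cref{Muzychuk-CI} in the square-free case) or reduce to case~\pref{InterchangeCI-Xe} using the known CI property of connected circulants of small valency. The problem is case~\pref{InterchangeCI-Xe}, which you rightly identify as the crux but do not actually close.

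You correctly observe that CI of~$X_e$ only yields some $m$ with $mS_e = S_o + (n/2)$, whereas Wilson type~\pref{Wilson-C4} additionally requires $m(S_o + (n/2)) = S_e$ for the \emph{same}~$m$. Your proposed fix --- take a second multiplier $m'$ with $m'(S_o + (n/2)) = S_e$ and reconcile the two using the cosets of the setwise stabilizer $M = \{\, u \in \ZZ_n^\times \mid uS_e = S_e \,\}$ together with $-S = S$ --- cannot work at that level of generality. The multipliers realizing $X_e \cong X_o$ form exactly the coset $mM$, and those realizing the reverse isomorphism form $m^{-1}M$; a common element exists if and only if $m^2 \in M$, i.e.\ $m^2 S_e = S_e$. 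That condition is essentially what needs to be proved (cf.\ \cref{Describe2p}), and it does not follow from $-1 \in M$ or from any purely coset-theoretic manipulation of the multiplier classes. The paper closes the gap by a genuinely different mechanism: it writes $\alpha_0(x) = m\,\varphi(x)$ with $\varphi \in \Aut X_e$, invokes \cref{mSrelprime} to conclude $\varphi \in \Aut \Cay(2\ZZ_n, mS_e) = \Aut X_o$, and then uses the identity $\alpha_0 = \alpha_1$ from the proof of \fullcref{Interchange}{iso} --- the fact that the \emph{same} map is an isomorphism in both directions --- to compute $m(S_o + (n/2)) = \alpha_0(S_o + (n/2)) - \alpha_0(0) = S_e$. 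That transfer of the automorphism~$\varphi$ from $X_e$ to $X_o$ is the idea missing from your argument, and without it (or some substitute) case~\pref{InterchangeCI-Xe} remains unproved.
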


\begin{proof}
Assume $X$ does not have Wilson type~\pref{Wilson-C1}.
Then \cref{2ZBlock} tells us that $X \cong \Cay( \ZZ_n , S + \nh)$.

\pref{InterchangeCI-X}
If $X$ has the Cayley Isomorphism Property, this implies there is some $m \in \ZZ_n^\times$, such that $S + \nh = mS$, so $X$ has Wilson type~\pref{Wilson-C4}.

\pref{InterchangeCI-Xe}
Assume that $X_e$ has the Cayley Isomorphism Property. Let $\alpha_0$ and~$\alpha_1$ be the isomorphisms in the proof of \fullcref{Interchange}{iso}. Since $X_e$ has the Cayley Isomorphism Property, we have $\alpha_0(x) = m \, \varphi(x)$, for some $m \in \ZZ_n^\times$, and some $\varphi \in \Aut X_e$. Then 
	\begin{align*}
	 m S_e 
	 &= m \, \bigl( \varphi(S_e) - \varphi(0) \bigr)
	 	&& \text{($\varphi \in \Aut X_e$)}
	 \\&= \alpha_0(S_e) - \alpha_0(0) 
	 	&& \text{($\alpha_0(x) = m \, \varphi(x)$)}
	 \\&= S_o + \nh 
	 	&& \text{($\alpha_0 \colon X_e \stackrel{\cong}{\to} X_o$)}
	 . \end{align*}
Now, since $\varphi \in \Aut X_e = \Aut \Cay (2\ZZ_n, S_e)$ and $m \in \ZZ_n^\times$, \cref{mSrelprime} implies 
	\[ \varphi \in \Aut \Cay( 2\ZZ_n, m S_e) = \Aut \Cay( 2\ZZ_n, S_o + \nh) = \Aut X_o . \]
Therefore
	\begin{align*}
	m(S_o + \nh) 
	&= m \bigl( \varphi(S_o + \nh) - \varphi(0) \bigr) 
		&& (\varphi \in \Aut X_o)
	\\&= \alpha_0(S_o + \nh) - \alpha_0(0) 
		&& (\alpha_0(x) = m \, \varphi(x))
	\\&= S_e
	,
		&& \begin{pmatrix} \text{$\alpha_0 = \alpha_1$ is an isomorphism} \\
			\text{from~$X_o$ to~$X_e$} \end{pmatrix}
	\end{align*}
so $m S_o = S_e + \nh$. Therefore
	\[ mS = m(S_e \cup S_o) = mS_e \cup mS_o = (S_o + \nh) \cup (S_e + \nh) = S + \nh . \]
So $X$ has Wilson type~\pref{Wilson-C4}.

\pref{InterchangeCI-squarefree} The order of~$X$ is square-free, so \cref{Muzychuk-CI} tells us that $X$ has the Cayley Isomorphism Property. Therefore~\pref{InterchangeCI-X} applies.

\pref{InterchangeCI-valency} It is known \cite[\S7.2]{Li-IsoSurvey} that every connected, circulant graph of valency \text{$\le 5$} has the Cayley Isomorphism Property. (A proof for valency~$4$ can also be found in~\cite[Thm.~5.4]{MuzychukKlinPoeschel}.) Therefore~\pref{InterchangeCI-Xe} applies.
\end{proof}

\begin{proof}[\bf Proof of \cref{2p}]
Let $X = \Cay(\ZZ_{2p}, S)$ be a nontrivially unstable, circulant graph of order $n = 2p$. 
It is easy to see, by inspection, that there are no nontrivially unstable, circulant graphs of order~$4$, so $p$ is odd. Therefore $n = 2p$ is square-free.

Let 
	\[ S' = S \setminus (S + p) . \]
Since $X$ is twin-free, we know that $S + p \neq S$ (see \cref{TwinFreeIffNoTranslation}), which means that $2S'$ is nonempty.

\setcounter{case}{0}
\
\begin{case}
Assume $2S' \neq \{0\}$.
\end{case}
Since $2 \ZZ_{2p}$ has order~$p$, which is prime, every nonzero element is a generator. So $2S'$ generates $2\ZZ_{2p}$. We also know from \cref{2S'-BX} that every automorphism of $BX$ is an automorphism of 
	\[ \Cay \bigl( \ZZ_{2p} \times \ZZ_2, 2S' \times \{0\} \bigr) . \]
By combining these two facts, we conclude that $2\ZZ_{2p} \times \{0\}$ is a block for the action of $\Aut BX$. 

Therefore, \fullcref{InterchangeCI}{squarefree} applies, so $X$ has Wilson type~\pref{Wilson-C1} or~\pref{Wilson-C4}.
However, $2 \ZZ_{2p} \cong \ZZ_p$ has no nontrivial, proper subgroups, so it is obvious that $X$ does not have Wilson type~\pref{Wilson-C1}. Therefore, it must have Wilson type~\pref{Wilson-C4}, which is exactly what we needed to prove.

\begin{case}
Assume $2S' = \{0\}$.
\end{case}
This means that $S' = \{p\}$, so $p \in S$. 

Since $X$ is unstable, we may let $\alpha$ be an automorphism of~$BX$, such that $\alpha(0,1) = (t,1)$ with $t \neq 0$. For all $x \in \ZZ_n$, we see from \cref{0orbit} that if $|x| = |t|$, then $x \notin S$. Since $p \in S$, this tell us that $|t| \neq 2$. So $|t|$ is either $p$ or~$2p$. Therefore, either $S$ does not contain any element of order~$p$, or $S$~does not contain any element of order~$2p$. However, since $2S' = \{0\}$, we also know that $s + p \in S$ for all $s \in S \setminus \{p\}$. Also note that 
	\[ |s| = p \iff |s + p| = 2p . \]
Putting this together, we conclude that $S = \{p\}$. This contradicts the fact that the nontrivially unstable graph~$X$ must be connected.
\end{proof}

\begin{cor} \label{Describe2p}
Let $X = \Cay(\ZZ_{2p}, S)$ be a circulant graph of order~$2p$, where $p$ is an odd prime, and let $S_e = S \cap 2\ZZ_{2p}$. The graph~$X$ is unstable if and only if either it is trivially unstable, or there exists $m \in \ZZ_{2p}^\times$, such that $m^2 S_e = S_e$, $m S_e \neq S_e$, and $S = S_e \cup \bigl( (n/2) + m S_e \bigr)$.
\end{cor}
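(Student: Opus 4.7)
The plan is to deduce this corollary essentially as a direct translation of \cref{2p} in the special case $n = 2p$, with the Wilson type~\pref{Wilson-C4} condition rewritten purely in terms of the even part~$S_e$.

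For the forward direction, I would assume that $X$ is unstable but not trivially unstable, so $X$ is connected, twin-free, nonbipartite and unstable; hence nontrivially unstable. By \cref{2p}, there exists $m \in \ZZ_{2p}^\times$ such that $\nh + mS = S$. Since $\gcd(m, 2p) = 1$ forces $m$ to be odd, multiplication by~$m$ preserves parity, so $mS_e$ consists of even elements and $mS_o$ of odd ones; and since $\nh = p$ is odd, adding~$\nh$ flips parity. Comparing the even and odd parts of $\nh + mS = S$ therefore yields $S_o = \nh + mS_e$ and $S_e = \nh + mS_o$. Substituting the first into the second and using $mp \equiv p \pmod{2p}$ (true because $(m-1)p$ is an even multiple of~$p$) gives $S_e = 2p + m^2 S_e = m^2 S_e$. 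Finally, if we had $mS_e = S_e$, then $S = S_e \cup (\nh + S_e)$ would be invariant under translation by~$\nh$, and \cref{TwinFreeIffNoTranslation} would contradict twin-freeness. So $mS_e \neq S_e$.

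For the backward direction, if $X$ is trivially unstable there is nothing to prove. Otherwise, assume the $m$ condition. A direct computation using $m^2 S_e = S_e$ and $mp \equiv p \pmod{2p}$ gives
\[
\nh + mS = (\nh + mS_e) \cup \bigl( \nh + m(\nh + mS_e) \bigr)
        = S_o \cup (2p + m^2 S_e)
        = S_o \cup S_e = S,
\]
so $X$ has Wilson type~\pref{Wilson-C4} and is therefore unstable by \cref{Wilson} (or directly by \cref{IsoTranslateS}).

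Almost all of the real work is already done in \cref{2p}; the only potential obstacle here is the bookkeeping needed to pass from the compact condition $\nh + mS = S$ on the full connection set to the more refined condition $m^2 S_e = S_e$ on just the even part together with the formula $S = S_e \cup (\nh + mS_e)$. The key arithmetic fact that makes this clean is $mp \equiv p \pmod{2p}$ for odd~$m$, which is what allows the even/odd decomposition to close up under the substitution.
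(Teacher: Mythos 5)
Your proposal is correct and follows essentially the same route as the paper: both directions reduce to \cref{2p} and Wilson type~\pref{Wilson-C4}, with the same even/odd decomposition of $\nh + mS = S$ (using that $m$ is odd and $mp \equiv p \pmod{2p}$) to extract $S_o = \nh + mS_e$ and $S_e = m^2 S_e$, and the same appeal to twin-freeness via \cref{TwinFreeIffNoTranslation} to rule out $mS_e = S_e$. No gaps.
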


\begin{proof}
($\Leftarrow$) If $X$ is not trivially unstable, the conditions imply that $X$ has Wilson type~\pref{Wilson-C4}.

($\Rightarrow$) Assume $X$ is nontrivially unstable. We conclude from \cref{2p} that $X$ has Wilson type~\pref{Wilson-C4}, so there is some $m \in \ZZ_{2p}$, such that $S = m S + p$. Since $p$ is odd, this implies that $S_o = m S_e + p$ (where $S_o = S \setminus S_e$) and
	\[ S_e = m S_o + p = m(m S_e + p) = m^2 S_e . \]
If $m S_e = S_e$, then $S_o = S_e + p$, so $S = S + p$, which contradicts the fact that nontrivially unstable graphs are twin-free.
\end{proof}

\begin{cor} \label{orders}
For $n \in \ZZ^+$, there does \textbf{not} exist a nontrivially unstable circulant graph of order~$n$ if and only if either $n$~is odd, or $n < 8$, or $n = 2p$, for some prime number $p \equiv 3 \pmod{4}$.
\end{cor}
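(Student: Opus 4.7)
The plan is to prove both implications of the biconditional, drawing on \cref{OddCirculant}, \cref{Describe2p}, \cref{WilsonC1C3} (to realize Wilson type~\pref{Wilson-C1}), and small-case inspection for $n \in \{2,4\}$.

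For the ``no nontrivially unstable graph exists'' direction, the odd case is immediate from \cref{OddCirculant}, and the orders $n \in \{2,4\}$ are settled by listing every admissible connection set and observing that each resulting graph is disconnected, bipartite, a complete graph (hence stable), or has twins. The substantive work is the case $n = 2p$ with $p \equiv 3 \pmod{4}$ (which also absorbs $n = 6$). Here, assuming $X = \Cay(\ZZ_{2p}, S)$ is nontrivially unstable, \cref{Describe2p} produces $m \in \ZZ_{2p}^\times$ with $m^2 S_e = S_e$ and $m S_e \neq S_e$, and non-bipartiteness forces $S_e \neq \emptyset$. I will identify $2\ZZ_{2p}$ with $\ZZ_p$ and set $H = \{\, a \in \ZZ_p^\times : a S_e = S_e \,\}$. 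The negation symmetry $-S_e = S_e$ gives $-1 \in H$, so $|H|$ is even, while the image $\bar m$ of $m$ in $\ZZ_p^\times$ satisfies $\bar m^2 \in H$ and $\bar m \notin H$, so the cyclic quotient $\ZZ_p^\times/H$ contains an element of order~$2$ and hence $|H|$ divides $(p-1)/2$. For $p \equiv 3 \pmod{4}$, $(p-1)/2$ is odd, yielding the desired contradiction.

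For the ``nontrivially unstable graph exists'' direction, I will produce, for each even $n \geq 8$ outside the excluded family, a concrete example, splitting into three sub-cases. (i) When $4 \mid n$, I will take $S = \{\pm 1, \pm 2, \pm(n/2 - 2)\}$ (collapsing to $\{\pm 1, \pm 2\}$ when $n = 8$); then $S_e$ is invariant under translation by $n/2 \in 2\ZZ_n$, so $X$ has Wilson type~\pref{Wilson-C1}. (ii) When $n = 2m$ with $m$ odd and composite, I will pick a nontrivial proper subgroup $H$ of $2\ZZ_n \cong \ZZ_m$, take $S_e$ to be a nonempty, negation-invariant union of nontrivial $H$-cosets, and set $S = S_e \cup \{\pm 1\}$; again Wilson type~\pref{Wilson-C1} applies. (iii) When $n = 2p$ with $p$ prime and $p \equiv 1 \pmod{4}$, $4 \mid p-1$ so there exists $c \in \ZZ_p^\times$ of order~$4$; I will lift to $m \in \ZZ_{2p}^\times$ via CRT and set $S_e = \{\pm 2\}$, $S = S_e \cup (p + m S_e)$. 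Then $m^2 \equiv -1 \pmod{p}$ gives $m^2 S_e = S_e$, while $m \not\equiv \pm 1 \pmod{p}$ gives $m S_e \neq S_e$, so \cref{Describe2p} certifies nontrivial instability. In every sub-case I will verify connectedness (from $\pm 1 \in S$, or from $H$ together with $\pm 1$), non-bipartiteness (via a triangle through $0$, $1$, $2$ in cases (i) and (iii), or through $0$ and two elements of $H$ in case~(ii)), and twin-freeness directly.

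The main obstacle will be the $p \equiv 3 \pmod{4}$ argument above: extracting the contradiction requires simultaneously using the characterization in \cref{Describe2p}, the negation invariance of $S_e$, and the cyclic structure of $\ZZ_p^\times$. The constructive direction, though comprising several sub-cases, is largely a verification that the chosen graphs satisfy either a Wilson condition or the hypothesis of \cref{Describe2p}.
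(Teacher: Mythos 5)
Your proposal is correct and follows the same overall skeleton as the paper's proof: \cref{OddCirculant} for odd orders, inspection for $n\in\{2,4\}$, a contradiction argument for $n=2p$ with $p\equiv 3\pmod 4$ built on the order-$2p$ classification, and explicit Wilson-type examples for the remaining even orders. The one step where you take a genuinely different route is the crucial case $p\equiv 3\pmod 4$. The paper stays inside $\ZZ_{2p}^\times$: from \cref{2p} it gets $S=mS+\nh$, writes $m=m_om_2$ with $m_o$ of odd order and $m_2$ of $2$-power order, uses $|\ZZ_{2p}^\times|=p-1\equiv 2\pmod 4$ to force $m_2=\pm1$, and then iterates $S=m_oS+\nh$ an odd number of times to reach $S=S+\nh$, contradicting twin-freeness. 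You instead pass to $\ZZ_p^\times$ and argue with the setwise stabilizer $H$ of $S_e$: since $-1\in H$, $|H|$ is even, while $\bar m^2\in H$, $\bar m\notin H$ forces $|H|$ to divide the odd number $(p-1)/2$. Both arguments exploit the same fact (that $\ZZ_p^\times$ is cyclic of order $\equiv 2\pmod 4$), but yours is arguably cleaner and avoids the iteration; the paper's has the small advantage of exhibiting the explicit twin $S=S+\nh$. On the constructive side your three subcases cover all even $n\ge 8$ outside the excluded family, and your examples differ from the paper's only superficially (the paper uses a single construction $S=\{\pm1\}\cup(\pm b+A)$ whenever $n/2$ is composite, which also covers your case $4\mid n$, and $S=\{\pm1,\nh\pm m\}$ with $m^2=-1$ when $n/2$ is a prime $\equiv 1\pmod 4$); in all cases the connectedness, non-bipartiteness and twin-freeness verifications you defer are routine but genuinely required, as the paper's own proof devotes several lines to ruling out a hidden translation $h+S=S$.
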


\begin{proof}
($\Rightarrow$)
If $\nh$ is not prime, then $2\ZZ_n \cong \ZZ_{n/2}$ has a nontrivial, proper subgroup~$A$. Choose some $b \in 2\ZZ_n \setminus A$, and let $S = \{\pm1\} \cup (\pm b + A)$, so $S_e := S \cap 2\ZZ_n = \pm b + A$. Then $X = \Cay(\ZZ_n, S)$ has Wilson type~\pref{Wilson-C1}, so it is unstable.

If $\nh$ is prime, and $\nh \not\equiv 3 \pmod{4}$ (and $n \ge 8$), then $\nh \equiv 1 \pmod{4}$, so there exists $m \in \ZZ_n^\times$, such that $m^2 = -1$. Let $S = \{\pm 1, \nh \pm m \}$, so $S_e := S \cap 2\ZZ_n = \{\pm m + \nh\}$ and $S = m S + \nh$. Then $X = \Cay(\ZZ_n, S)$ has Wilson type~\pref{Wilson-C4}, so it is unstable.

In either case, $X$ is also connected (because $1 \in S$) and nonbipartite (because $S_e \neq \emptyset$). 
Hence, if $X$ is not nontrivially unstable, then it must not be twin-free, so there is a nonzero $h \in \ZZ_n$, such that $h + S = S$.
Note that in both cases, $S_o=\{\pm 1\}$ and since $n > 4$ and $h$ is nonzero, it cannot happen that $\{\pm 1\}+h=\{\pm 1\}$. It follows that $S_o+h=S_e$ and $S_e+h=S_o$ (and $h$ is odd).

Since $S_o + 2h = S_o$ (and $S_0 = \{\pm 1\}$), we must have $2h = 0$, which means $h = \nh$, so $S_e = S_o +\nh = \{\nh \pm 1\}$.

If $\nh$ is not prime, then, since $\{\nh \pm 1\} = S_e = \pm b + A$, we must have $\langle 2 \rangle \subseteq A$. Since $n > 4$, this implies $|b + A| = |A| \ge n/2 > 2$, which is a contradiction.

If $\nh$ is prime, we must have $m = \pm 1$ (since $S_e=\{\nh \pm m\}$, which contradicts the fact that $m^2 = -1$.

($\Leftarrow$)
We prove the contrapositive: supposing there does exist a nontrivially unstable circulant graph of order~$n$, we will show that $n$ is odd, that $n \ge 8$, and that $n/2$ is \emph{not} a prime number that is congruent to~$3 \pmod{4}$.

The fact that $n$ is odd is immediate from \cref{OddCirculant}. Also, it is easy to see, by inspection, that there are no nontrivially unstable circulant graphs of order $2$ or~$4$; so $n \ge 6$.

Now suppose $X = \Cay(\ZZ_{2p}, S)$ is a nontrivially unstable circulant graph of order~$2p$, where $p$ is prime, and $p \equiv 3 \pmod{4}$. (This includes the case where $n = 6$.) We will show that this leads to a contradiction.
By \cref{2p}, we know that $X$ has Wilson type~\pref{Wilson-C4}, so there is some $m \in \ZZ_{2p}^\times$, such that $S = mS + \nh$. Write $m = m_o m_2$, where $m_o$ has odd order (as an element of the group $\ZZ_{2p}^\times$), and the order of $m_2$ is a power of~$2$. Since $\ZZ_{2p}^\times$ is cyclic of order $p -1 \equiv 2 \pmod{4}$, there are no elements of order~$4$ in $\ZZ_{2p}^\times$, so $m_2 \in \{\pm 1\}$. Since $S = -S$, this implies $S = m_2 S$, so we conclude that $S = m_oS + \nh$. After repeatedly multiplying both sides of this equation by~$m_o$, we see that $S = m_o^k S + \nh$ for any odd number~$k$, including $k = |m_o|$. Hence, we have $S = S + \nh$. This contradicts the fact that $X$ is twin-free.
\end{proof}

\section{Computational results} \label{ComputationalSect}

S.\,Wilson \cite[p.~377]{Wilson} mentioned: ``There are 3274 circulant graphs which are non-trivially unstable and have no more than 38 vertices.'' However, we performed computations that produce a different number: there seem to be 3576 such graphs (up to isomorphism). The interested reader can reproduce our results in only a few minutes by running the Sagemath\footnote{Sagemath code can be run online at \url{https://cocalc.com/}} code or Magma code or Maple code that is available in the ancillary files directory of this paper 
on the arxiv.

Analysis of these graphs establishes:

\begin{obs} \label{Unstable24}
Every nontrivially unstable circulant graph of order less than 40 has a Wilson type, \emph{except} the following six graphs (up to isomorphism), all of order~24:
	\begin{enumerate}
	\item $\Cay( \ZZ_{24}, \{ \pm2, \pm3, \pm8, \pm9, \pm10 \} )$,
	\item $\Cay( \ZZ_{24}, \{ \pm2, \pm3, \pm8, \pm9, \pm10, 12 \} )$,
	\item $\Cay( \ZZ_{24}, \{ \pm1, \pm2, \pm5, \pm7, \pm8, \pm10, \pm11 \} )$,
	\item $\Cay( \ZZ_{24}, \{ \pm1, \pm2, \pm5, \pm7, \pm8, \pm10, \pm11, 12 \} )$,
	\item $\Cay( \ZZ_{24}, \{ \pm1, \pm2, \pm3, \pm5, \pm7, \pm8, \pm9, \pm10, \pm11 \} )$,
	\item $\Cay( \ZZ_{24}, \{ \pm1, \pm2, \pm3, \pm5, \pm7, \pm8, \pm9, \pm10, \pm11, 12 \} )$.
	\end{enumerate}
The first of these graphs appears explicitly in \cite[p.~156]{QinXiaZhou}, and it seems that some (or all) of the others were also known to the authors of that paper.
\end{obs}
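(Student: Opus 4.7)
The plan is to verify this observation by an exhaustive computer search, as the paper's reference to ancillary Sagemath, Magma, and Maple code makes clear. First, I would enumerate, up to isomorphism, all connected, nonbipartite, twin-free circulant graphs $\Cay(\ZZ_n, S)$ with $n < 40$. By \cref{OddCirculant} only even~$n$ are relevant, and trivial inspection rules out $n \in \{2,4\}$; for each remaining even~$n$, I would iterate over inverse-closed subsets $S \subseteq \ZZ_n \setminus \{0\}$ and test connectedness ($\langle S \rangle = \ZZ_n$), nonbipartiteness ($S \cap 2\ZZ_n \ne \emptyset$), and twin-freeness (no nonzero $h$ with $h + S = S$, cf.\ \cref{TwinFreeIffNoTranslation}). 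To collapse duplicate isomorphism classes I would first factor out the natural $\ZZ_n^\times$-action on connection sets and then apply a canonicalization routine such as \texttt{nauty}, since $\ZZ_n$ is not always CI for $n < 40$.

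Second, for each representative $X$, I would compute $|\Aut X|$ and $|\Aut BX|$ with a graph-automorphism package and declare $X$ unstable precisely when $|\Aut BX| > 2 \, |\Aut X|$. For each resulting nontrivially unstable $X$, I would then run a direct finite decision procedure for ``has a Wilson type'' by iterating over the parameter ranges of \cref{Wilson}: namely, testing each nonzero $h \in 2\ZZ_n$ for $h + S_e = S_e$ (type~\pref{Wilson-C1}); testing each odd $h \in \ZZ_n$ against conditions \fullref{Wilson-C2}{So} and \fullref{Wilson-C2}{s+b} when $4 \mid n$ (type~\pref{Wilson-C2}); iterating over subgroups $H \le \ZZ_n$, computing $R = \{\, s \in S \mid s + H \not\subseteq S \,\}$ and $d = \gcd(R \cup \{n\})$, and verifying the listed divisibility and parity conditions (type~\pref{Wilson-C3}); and testing each $m \in \ZZ_n^\times$ for $(n/2) + mS = S$ (type~\pref{Wilson-C4}). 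A graph belongs to the exception list exactly when it fails all four tests, and I would confirm that the output matches the six listed graphs.

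The main obstacle is managing the combinatorial explosion: the number of inverse-closed subsets of $\ZZ_n$ grows exponentially with~$n$, so for $n \in \{32, 36, 38\}$ a naive enumeration becomes slow and one needs efficient isomorphism filtering to keep the search tractable. Computing $\Aut BX$ itself is cheap since $BX$ has at most $78$ vertices. As a final sanity check against any bug in the isomorphism filter, I would verify by direct (small) calculation that each of the six listed order-$24$ graphs is nontrivially unstable and admits none of the four Wilson types; this establishes the ``these six graphs really are exceptions'' direction independently of the enumeration, leaving only the ``no other exceptions'' direction to rest on the full computational search.
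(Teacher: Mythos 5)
Your proposal is correct and matches the paper's approach: the observation is established purely by an exhaustive computer search (the paper supplies Sagemath/Magma/Maple code in its ancillary files rather than a written argument), and your enumeration-plus-instability-test-plus-Wilson-type-test pipeline, including the criterion $|\Aut BX| > 2\,|\Aut X|$ and the direct verification of the six exceptional order-$24$ graphs, is exactly what such code must do. Your caution about not relying on the Cayley Isomorphism Property for all $n<40$ (e.g.\ $n=16$, $24$, $36$) is well placed.
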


\begin{rem}
Several additional days of computer time extended the exhaustive calculations to order~50. (However, these calculations are certainly not definitive, because they were performed only once, so their correctness has not been verified.) Lists of the additional 67725 nontrivially unstable graphs have been deposited in the ancillary files directory of this paper 
on the arxiv. 

We note that the instability of every example that was found is explained by the instability conditions in \cref{InstabilitySect}. 
On the other hand, the calculations uncovered 316 additional examples of nontrivially unstable circulant graphs with no Wilson type: 
	52 of order~40, 
	262 of order~48,
	and
	2 of order~50.
\end{rem}

\AtEndDocument{


\newpage
\addtocontents{toc}{\vskip\bigskipamount}
\begin{appendix}

\section{Notes to aid the referee}

\begin{aid} \label{TwinFreeIffNoTranslation-aid}
We prove the contrapositive.

($\Rightarrow$) Suppose there exists $h \in \ZZ_n$, such that $h + S = S$. We claim that $0$ and~$-h$ have exactly the same neighbors, so $\Cay(\ZZ_n, S)$ is not twin-free:
	\begin{align*}
	\text{$v$ is a neighbor of~$0$} 
	& \iff v \in S
	\\&\iff h + v \in S
	\\&\iff v - (-h) \in S
	\\&\iff \text{$v$ is adjacent to~$-h$}
	. \end{align*}

($\Leftarrow$) Suppose $v$ and~$w$ have exactly the same neighbors in $\Cay(\ZZ_n, S)$, and let $h = v - w$ (so $w = v - h$). We claim that $h + S = S$:
	\begin{align*}
	S 
	&= \{\, x - w \mid \text{$x$ is a neighbor of~$w$} \,\}
	\\&= \{\, x - w \mid \text{$x$ is a neighbor of~$v$} \,\}
	\\&= \{\, x - (v - h) \mid \text{$x$ is a neighbor of~$v$} \,\}
	\\&= h + \{\, x - v \mid \text{$x$ is a neighbor of~$v$} \,\}
	\\&= h + S
	. \end{align*}
\end{aid}

\begin{aid} \label{BlockDefn-H}
Let $\hat G$ be the regular representation of $\ZZ_n \times \ZZ_2$. Since $\hat G$ is contained in $\Aut BX$, it is clear that $\mathcal{B}$ is also a block for the action of~$\hat G$. Hence, \cite[Exer.~1.5.6, p.~13]{DixonMortimer} tells us that $\mathcal{B}$ is an orbit of some subgroup~$\hat H$ of~$\hat G$. If we let $H$ be the corresponding subgroup of $\ZZ_n \times \ZZ_2$, then the $\hat H$ orbit of an element~$b$ of $\ZZ_n \times \ZZ_2$ is precisely $b + H$, so it is a coset of~$H$.

We see from \cite[Exer.~1.5.3, p.~13]{DixonMortimer} that every coset of~$H$ is a block. Furthermore, since these cosets form a partition of $\ZZ_n \times \ZZ_2$, we see that they are the entire ``system of blocks'' containing~$\mathcal{B}$, so \cite[Exer.~1.5.3, p.~12]{DixonMortimer} tells us that $\Aut BX$ acts on this set of blocks.
\end{aid}

\begin{aid} \label{WilsonC1C3-inS-aid}
Since $H = \langle h \rangle$ and $|h| = 2$, it is clear that $|H| = 2$.

If $|K|$ is divisible by~$4$, then (since $K$ is a subgroup of~$\ZZ_n$, and is therefore cyclic) we know that $K$ has an element~$k$ of order~$4$. So $|2k| = 2$. Since the element of order~$2$ in~$\ZZ_n$ is unique, we conclude that $h = 2k \in 2K$. This contradicts the fact that $h \in K_o$.
\end{aid}

\begin{aid} \label{WilsonIsSpecialCase-C1-aid}
Since $h \in 2K$, we may write $h = 2k$, for some $k \in K$. If $|h| = 2$, this implies $|k| = 4$, so $|K|$ must be divisible by~$4$.
\end{aid}

\begin{aid} \label{WilsonIsSpecialCase-C3-aid}
If $|H| = 2$, then $H = \langle \nh \rangle$.
Also, since $|K|$ is even, we know that $\nh \in K$. So
	$\nh \in H \cap K \subseteq 2K$,
which means there is some $k \in K$, such that $2k = \nh$.
Then $|k| = 4$, so $|K|$ is divisible by~$4$.
\end{aid}

\begin{aid} \label{IsoTranslateS-aid}
Suppose $uv$ is an edge of~$X$. Since $\alpha$ is an isomorphism from $\Cay(\ZZ_n, S)$ to $\Cay(\ZZ_n, S + \nh)$, this implies that $\alpha(u)$ is adjacent to $\alpha(v)$ in $\Cay(\ZZ_n, S + \nh)$, which means $\alpha(u) - \alpha(v) \in S + \nh$. So
	\[
	\alpha(u) - \beta(v)
	= \alpha(u) - \bigl( \alpha(v) + \nh \bigr)
	= \bigl( \alpha(u) - \alpha(v) \bigr) + \nh
	\in (S + \nh) + \nh
	= S
	. \]
\end{aid}

\begin{aid} \label{IsoTranslateSEg-notWilson-aid}
Since $2\ZZ_n / \langle a \rangle \cong \ZZ_{p^2}/p \ZZ_{p^2} \cong \ZZ_p$, there is a homomorphism $\varphi \colon 2\ZZ_n \to \ZZ_p$, such that $\varphi(2) = 1$ and $\ker \varphi = \langle a \rangle$. Then $\varphi(S_e) = \{\pm 1, 0\}$ and $\varphi(S_o')= \{\pm 1, 0\}$. Therefore, in~$\ZZ_p$, we have
	\[ \{\pm m, 0 \}
	= m \cdot \{\pm 1, 0\}
	= m \cdot \varphi(S_e)
	= \varphi(m S_e)
	= \varphi(S_o')
	= \{\pm 1, 0\} 
	, \]
so $m \equiv \pm 1 \pmod{p}$.
\end{aid}

\begin{aid} \label{val8Eg-aid}
Let $a = 2^{\ell-2} = n/12 \in S$.
	\begin{itemize}
	\item Since $\gcd(3,a) = 1$, we know that $X$ is connected.
	\item Since $3$ is odd and $a$ is even (because $\ell > 2$), we know that $X$ is not bipartite.
	\item Suppose $X$ is not twin-free. Then \cref{TwinFreeIffNoTranslation} provides some nonzero $h \in \ZZ_n$, such that $h + S = S$. Since $|S| = 8$, this implies that $|h|$ is a divisor of~$8$, so $|h|$ is even, which implies that $\nh \in \langle h \rangle$. Therefore, we have $\nh + S = S$. However, the only even elements of~$S$ are $\pm 6$ and $\pm n/12$, so it is clear that $\nh + 6 \notin S$ (because $\ell > 3$). This is a contradiction.
	\end{itemize}
\end{aid}

\begin{aid} \label{val8Eg-unstable-aid} We have
	\begin{align*}
	m (\nh + 3)
	&= \left( \frac{n}{6} - 1 \right) \left( \frac{n}{2} + 3 \right)
	\\&= (2^{\ell-1} - 1) (3 \cdot 2^{\ell -1} + 3 )
	\\&= 3 (2^{\ell-1} - 1)(2^{\ell-1} + 1)
	\\&= 3(2^{2\ell - 2} - 1)
	\\&= 3\cdot 2^{2\ell - 2} - 3
	\\&= 2^{\ell - 2} \, n - 3
	\\&\equiv -3 \qquad \pmod{n}
	.\end{align*}
\end{aid}

\begin{aid} \label{val8Eg-noWilson}
Since $n \ge 3 \cdot 2^4 = 48 > 36$, have
	\[ (\nh - 3) -  \frac{n}{12}
	= \frac{6n - 36 - n}{12}
	> \frac{4n}{12}
	= \frac{n}{3}
	. \]

If $\ell \ge 5$, then $n/12 \ge (3 \cdot 2^5)/12 = 8 > 6$, so $S$ does not contain any elements between $n/12$ and $\nh - 3$. Therefore, the preceding inequality implies that $h > n/3$. Since $h$ is a divisor of~$n$, we conclude that $h = \nh$.

A bit of additional argument is required in the special case where $\ell = 4$ (because $n/12 = 4 < 6$). However, in this case, we have $(\nh - 3) - 6 = 21 - 6 = 15$. This is not a divisor of $48 = n$, so this difference is not equal to~$h$. Therefore, $6$ and $\nh - 3$ cannot be two consecutive elements of a coset of~$H$, so we once again must have $h \ge (\nh - 3) -  (n/12) > n/3$.
\end{aid}

\begin{aid} \label{XeUnstable-aid}
If $u$ and~$v$ are odd, then $\alpha'(u) = u$ and $\beta'(v) = v$, so it is obvious that $\alpha'(u)$ is adjacent to $\beta'(v)$. Also, if $u$ and~$v$ are even, then it is immediate from~\pref{XeUnstable-even} that $\alpha'(u)$ is adjacent to $\beta'(v)$.

So we may assume that $s \coloneqq u - v$ is odd. Then we see from~\pref{XeUnstable-odd} that $s + H \subseteq S$. From~\pref{XeUnstable-H} (and the fact that $\alpha'(x) = \beta'(x) = x \in x + H$ when $x \notin 2 \ZZ_n$), we also know that 
	\[ \text{$\alpha'(x) \in x + H$ and $\beta'(x) \in x + H$, for all $x \in \ZZ_n$.} \]
Therefore
	\[ \alpha'(u) - \beta'(v) \in (u + H) - (v + H) = (u - v) + H = s + H \subseteq S , \]
so $\alpha'(u)$ is adjacent to $\beta'(v)$.
\end{aid}

\begin{aid} \label{2S'-BX-aid}
For $v,w \in V(BX)$, let $\#(v,w)$ be the number of walks of length~$2$ from~$v$ to~$w$ in $BX$. These walks are in one-to-one correspondence with the elements of
	\[ \mathcal{W}_{v,w} \coloneqq \{\, (s_1,s_2) \in S \times S \mid \widehat{s_1} + \widehat{s_2} = w - v \,\} , \]
so 
	\[ \#(v,w) = |\mathcal{W}_{v,w}| . \]
Since $\ZZ_n \times \ZZ_2$ is abelian, we have $\widehat{s_2} + \widehat{s_1} = \widehat{s_1} + \widehat{s_2}$, so the map $\pi \colon (s_1,s_2) \mapsto (s_2, s_1)$ is a permutation of~$\mathcal{W}$. Let
	\[ \mathcal{F}_{v,w} \coloneqq \{\, (s,s) \mid s \in S, \ 2\widehat{s} = w - v \,\}  \]
be the set of fixed points of~$\pi$ in its action on~$\mathcal{W}_{v,w}$.
Since the cardinality of every orbit of~$\pi$ is either $1$ or~$2$, we see that 
	\[ |\mathcal{W}_{v,w}| \equiv |\mathcal{F}_{v,w}| \pmod{2} . \]

We claim that $|\mathcal{F}_{v,w}|$ is either $0$, $1$, or~$2$. (So $|\mathcal{F}_{v,w}|$ is odd if and only if $|\mathcal{F}_{v,w}| = 1$.) To see this, suppose $(s,s)$ and $(t,t)$ are two different elements of~$\mathcal{F}_{v,w}$. This means that $s \neq t$ and $2\widehat{s} = w - v = 2\widehat{t}$, so $2s = 2t$. Then $t - s$ must be an element of order~$2$, so $t = s + \nh$ (because $\nh$ is the only element of order~$2$ in~$\ZZ_n$). This completes the proof of the claim. Furthermore, the argument establishes that
	\[ |\mathcal{F}_{v,w}| = 1 \iff w - v \in 2 \widetilde{S'} . \]
So
	\[ \text{$|\mathcal{F}_{v,w}|$ is odd}
		\quad \iff \quad w - v \in 2 \widetilde{S'} . \]

Combining the above facts implies that 
	\[ \text{$\#(v,w)$ is odd} \quad \iff \quad w - v \in 2 \widetilde{S'} . \]
Since every automorphism of~$BX$ must preserve the value of $\#(v,w)$, and $2 \widetilde{S'} = 2S' \times \{0\}$, this implies that every automorphism of~$BX$ is an automorphism of $\Cay \bigl( \ZZ_n \times \ZZ_2, \ 2S' \times \{0\} \bigr)$.
\end{aid}

\begin{aid} \label{congruentorbit-aid}
Since $t \in pS$, we have $t = ps$, for some $s \in S$. However, we also have $t = p \ell t$. So $p \ell t = ps$. Since multiplication by~$p$ is a bijection on $\ZZ_n \times \ZZ_2$, this implies $\ell t = s \in S$.
\end{aid}

\begin{aid} \label{0orbit-aid}
For all $v,w \in \ZZ_n$, we have:
	\begin{align*}
	 (v,0) &\text{ is adjacent to $(w,1)$ in $BX$} 
	 \\& \iff v - w \in S
	 	&& \begin{pmatrix} \text{$X = \Cay(G,S)$, and} \\ \text{definition of $BX$)} \end{pmatrix}
	 \\& \iff v - w \notin S^c
	 	&& \text{(definition of $X^c$)}
	 \\& \iff \begin{matrix} \text{$(v,0)$ is not adjacent} \\ \text{to $(w,1)$ in $BX^c$} \end{matrix} 
	 	&& \begin{pmatrix} \text{$X^c = \Cay(G,S^c)$, and} \\ \text{definition of $BX^c$)} \end{pmatrix}
	 . \end{align*}
Therefore, if $\alpha$ is an automorphism of~$BX$ that fixes $\ZZ_n \times \{0\}$ and $\ZZ_n \times \{1\}$ (setwise), then $\alpha$ is also an automorphism of~$BX^c$.

Now, let $\alpha$ be any automorphism of~$BX$. Since the bipartition of~$BX$ is unique (because $BX$ is connected), we know that $\alpha$ either fixes the two bipartition sets (setwise) or interchanges them. If it fixes the sets, then the preceding paragraph implies that $\alpha$ is an automorphism of $BX^c$, as desired.

Now, suppose $\alpha$ interchanges the two bipartition sets. Let $\alpha'$ be the composition of~$\alpha$ with the translation by $(0,1)$. Then $\alpha'$ is an automorphism of $BX$ that fixes the two bipartition sets, so it is an automorphism of~$BX^c$. Since $\alpha$ differs from~$\alpha'$ only by a translation (which is an automorphism of~$BX^c$), we conclude that $\alpha$ is an automorphism of~$BX^c$.
\end{aid}

\begin{aid} \label{OddStableIso-aid-mSiso}
Since $\varphi(\ZZ_n \times \{0\}) = \ZZ_n \times \{0\}$, there is a bijection $\alpha \colon \ZZ_n \to \ZZ_n$, such that $\varphi(x,0) = \bigl( \alpha(x), 0 \bigr)$ for all $x \in \ZZ_n$.

Let $m = n + 1$. Then $m$ is even (because $n$ is odd), so $m \widetilde S_1 = mS_1 \times \{0\}$. Also,  for all $x \in \ZZ_n$ we have $nx = 0$, so $mx = x$; therefore $m S_1 = S_1$. Then, since $m = n + 1$ is obviously relatively prime to~$n$, \cref{mSrelprime} tells us that $\varphi$ is an isomorphism from $\Cay(\ZZ_n \times \ZZ_2, S_1 \times \{0\})$ to $\Cay(\ZZ_n \times \ZZ_2, S_2 \times \{0\})$. By the definition of~$\alpha$, this implies that $\alpha$ is an isomorphism from $\Cay(\ZZ_n, S_1)$ to $\Cay(\ZZ_n, S_2)$.

We already know (from the definition of~$\alpha$) that $\varphi(x,0) = \bigl( \alpha(x), 0 \bigr)$ for all $x \in \ZZ_n$.
The fact that $X_2$ is twin-free now implies that we also have $\varphi(x,1) = \bigl( \alpha(x), 1 \bigr)$. To see this, let $N_1$ be the set of neighbors of~$x$ in~$X_1$, so $N_2 \coloneqq \alpha(N_1)$ is the set of neighbors of $\alpha(x)$ in~$X_2$ (since $\alpha$ is an isomorphism). Then the set of neighbors of $(x,1)$ is $N_1 \times \{0\}$ (by the definition of $BX_1$), so, since $\varphi$ is an isomorphism, the set of neighbors of $\varphi(x,1)$ in $BX_2$ is
	\[ \varphi\big(N_1 \times \{0\} \bigr) 
	= \bigl( \alpha(N_1) \times \{0\} \bigr)
	= N_2 \times \{0\} . \]
Thus, if we write $\varphi(x,1) = (y,1)$, then $N_2$ is the set of neighbors of~$y$ in~$X_2$. Since this is the same as the set of neighbors of $\alpha(x)$, and $X_2$ is twin-free, we conclude that $y = \alpha(x)$. So $\varphi(x,1) = (y,1) = \bigl( \alpha(x), 1 \bigr)$, as claimed.
\end{aid}

\begin{aid} \label{2p-alpha-aid}
First note that $h$ must be even, since $h + S_e = S_e$ (and $S_e$ is nonempty, since $X$ is not bipartite). Therefore, $\alpha$ fixes the two sets $\{x \equiv i \pmod{2}\}$ and $\{x \not\equiv i \pmod{2}\}$. Since $\alpha$ is obviously a bijection on each of these sets, we conclude that $\alpha$ is a bijection.

Now, suppose $(x,0)$ is adjacent to $(y,1)$, and let $s = y - x \in S$.
	\begin{itemize}
	\item If $x$ is odd and $y$ is even, then $\alpha$ fixes $(x,0)$ and $(y,1)$, so it is obvious that $\alpha(x,0)$ is adjacent to $\alpha(y,1)$.
	\item If $x$ is even and $y$ is odd, then $\alpha$ translates $(x,0)$ and $(y,1)$ by $(h,0)$. Since this translation is an automorphism of $BX$, it is again obvious that $\alpha(x,0)$ is adjacent to $\alpha(y,1)$.
	\item Finally, suppose $x$ and $y$ have the same parity, which means $s \in S_e$. Then $\alpha$ fixes one of the two vertices, and translates the other by $(h,0)$, so
		\begin{align*}
		 \alpha(y,1) - \alpha(x,0) 
		&= \bigl( (y,1) - (x,0) \bigr) \pm (h,0)
		\in \bigl( S_e \times \{1\} \bigr) \pm (h,0)
		\\&= \bigl( S_e \pm h \bigr) \times \{1\}
		= S_e \times \{1\}
		\subseteq S \times \{1\}
		, \end{align*}
	so $\alpha(x,0)$ is adjacent to $\alpha(y,1)$.
	\end{itemize}
\end{aid}

 \begin{aid} \label{IsoToCirculant-aid}
 Let $A = [\zeta^{ij}]_{0 \le i,j \le n-1}$ be the Vandermonde matrix. Also let 
 	\[ v = [v_0, v_1, \ldots, v_{n-1}] = [v_i]_{i=0}^{n-1} \in \ZZ^n \]
and 
	\[ w = [w_0, w_1, \ldots, w_{n-1}] = [w_i]_{i=0}^{n-1} \in \ZZ^n , \]
where
 	\[ v_i = \begin{cases}
		1 & \text{if $i \in S$}, \\
		0 & \text{otherwise}
		, \end{cases}
	\]
and
 	\[ w_i = \begin{cases}
		1 & \text{if $i \in mS$}, \\
		0 & \text{otherwise}
		. \end{cases}
	\]
Then
	\[ \text{$\displaystyle v A = \left[ \sum_{s \in S} \zeta^{is} \right]_{i=0}^{n-1} \in \CC^n$
	\quad and \quad
	$\displaystyle w A = \left[ \sum_{s \in mS} \zeta^{is} \right]_{i=0}^{n-1} \in \CC^n$}
	. \]
Since $\sum_{s \in S} \zeta^{is} = \sum_{s \in mS} \zeta^{is}$ for all~$i$, we conclude that $vA = wA$. Since $A$ is invertible, this implies $v = w$, which means $S = mS$.
 \end{aid}

\begin{aid} \label{Interchange-generate-aid}
It is obvious from the definition that $\varphi$ fixes $C_e^0$. (Indeed, it acts as the identity on this set.) Since $\nh$ is odd,  we also see that $\varphi$ maps $C_o^1$ to $C_e^1$. Hence, $\varphi$ maps $G_o = C_e^0 \cup C_o^1$ to $C_e^0 \cup C_e^1 = \ZZ_n \times \ZZ_2$ (and is a bijection).

Now, suppose $(x,0)$ is adjacent to $(y,1)$ in $B_o$. This means that $y - x \in S_o$. Then
	\[ \varphi(y,1) - \varphi(x,0) 
	= (y + \nh, 1) - (x,0) 
	= \bigl( (y - x) + \nh, 1 \bigr) 
	\in (S_o + \nh) \times \{1\}
	, \]
so $\varphi(x,0)$ is adjacent to $\varphi(y,1)$ in $\Cay \bigl( 2 \ZZ_n \times \ZZ_2,  (S_o + \nh) \times \{1\} \bigr)$, as desired.
\end{aid}

\begin{aid} \label{Interchange-iso-aid-XetoXo}
\Cref{OddStableIso} provides an isomorphism $\alpha_0 \colon X_e \to X_o$, such that
	\[ \varphi \bigl( \alpha(x,1) \bigr) = \bigl( \alpha_0(x), 1 \bigr) . \]
From the definition of~$\varphi$, we conclude that $\alpha(x,1) = \bigl( \alpha_0(x) + \nh, 1 \bigr)$.
\end{aid}

\begin{aid} \label{Interchange-iso-aid-XotoXe}
Let $\beta(x,i) = \alpha \bigl( \varphi'(x,i) \bigr) - (\nh, 0)$, so $\beta(C_e^0) = C_e^0$ and $\beta(C_e^1) = C_e^1$, which means that $\beta$ is a bijection on~$G_e$. Note that
	\[ \beta(y,1) - \beta(x,0)
	= \alpha \bigl( \varphi'(y, 1) \bigr) - \alpha\bigl( \varphi'(x,0) \bigr)
	= \alpha(y, 1) - \alpha(x + \nh , 0)
	. \]

Suppose $(x,0)$ is adjacent to $(y,1)$ in $BX_o$. Then $y - x \in S_o + \nh$, so $y - (x + \nh) \in S_o \subseteq S$. Since $\alpha \in \Aut BX$, then
	\[ \alpha(y, 1) - \alpha(x + \nh , 0)
	\in S \times \{1\} . \]
Furthermore, since $(y,1) \in C_e^1$ and $(x + \nh, 0) \in C_o^0$, we know that $\alpha(y, 1) \in C_o^1$ and $\alpha(x + \nh, 0) \in C_o^0$, so
	\begin{align*}
	\beta(y,1) - \beta(x,0)
	&= \alpha(y, 1) - \alpha(x + \nh , 0) 
	\\&\in \bigl( S \times \{1\} \bigr) \cap ( C_o^1 - C_o^0) 
	\\&= \bigl( S \times \{1\} \bigr) \cap C_e^1
	\\&= S_e \times \{1\} 
	. \end{align*}
So $\beta$ is an isomorphism from $BX_o$ to $BX_e$.
\end{aid}

\begin{aid} \label{Interchange-iso-alphaprime}
Suppose $v - w \in S_e$. Then $v$ and $w$ have the same parity, so the calculations of $\alpha'(v)$ and $\alpha'(w)$ both use the same formula, so either
	\[ \alpha'(v) - \alpha'(w) 
	= \alpha_0(v) - \alpha_0(w)
	\in S_o + \nh \]
or
	\begin{align*} \alpha'(v) - \alpha'(w) 
	&= \bigl( \alpha_0(v + \nh)  + \nh \bigr) - \bigl( \alpha_0(w + \nh)  + \nh \bigr) 
	\\&=  \alpha_0(v + \nh) - \alpha_0(w + \nh)
	\\&\in S_o + \nh \end{align*}
(because $(v + \nh) - (w + \nh) = v - w \in S_e$).
\end{aid}

\end{appendix}}  

\end{document}